\documentclass[11pt,a4paper]{article}
\pdfoutput=1 % for arxiv

\usepackage{epsf,epsfig,amsfonts,amsgen,amsmath,amstext,amsbsy,amsopn,amsthm,cases,listings,color
%,lineno
}
\usepackage{ebezier,eepic}
\usepackage{color}
\usepackage{multirow}
\usepackage{epstopdf}
\usepackage{graphicx}
\usepackage{pgf,tikz}
\usepackage{mathrsfs}
\usepackage[marginal]{footmisc}
\usepackage{enumitem}
\usepackage[titletoc]{appendix}
\usepackage{booktabs}
\usepackage{url}
\usepackage{mathtools}
\usepackage{comment}
\usepackage{pgfplots}
\usepackage{authblk}
\usepackage{amssymb}
\usepackage{float}
\usepackage{wasysym}

\usepackage{empheq}

\usepackage{dsfont}

\usepackage{tikz}
\usepackage{longtable}
\usepackage{subfigure}
\pgfplotsset{compat=1.18}
\usepackage{mathrsfs}
\usepackage{wasysym}
\usetikzlibrary{arrows}
\usepackage{aligned-overset}
\usepackage{bm}%for vector
\usepackage{bbm}%for vector
\usepackage[T1]{fontenc}%for polish hook
%%%%%%%%%%%%%%%%%%%%%%%%%%%%%%%%%%%%%%%%%%%%%%%%
%Do not use this in journal or arxiv version.
\usepackage[backref=page]{hyperref}
% \usepackage[notref,notcite,color]{showkeys}
%%%%%%%%%%%%%%%%%%%%%%%%%%%%%%%%%%%%%%%%%%%%%%%%

\allowdisplaybreaks[1]

\definecolor{uuuuuu}{rgb}{0.27,0.27,0.27}
\definecolor{sqsqsq}{rgb}{0.1255,0.1255,0.1255}

\setlength{\textwidth}{150mm} \setlength{\oddsidemargin}{7mm}
\setlength{\evensidemargin}{7mm} \setlength{\topmargin}{-5mm}
\setlength{\textheight}{245mm} \topmargin -18mm

\newtheorem{definition}{Definition} [section]
\newtheorem{theorem}[definition]{Theorem}
\newtheorem{lemma}[definition]{Lemma}
\newtheorem{proposition}[definition]{Proposition}

\newtheorem{corollary}[definition]{Corollary}
\newtheorem{conjecture}[definition]{Conjecture}
\newtheorem{claim}[definition]{Claim}
\newtheorem{problem}[definition]{Problem}

\newtheorem{fact}[definition]{Fact}

%for join of two graphs

\newcommand{\norm}[1]{\left\lVert#1\right\rVert}%for norm

\newcommand{\h}{\mathcal{H}}

\newcommand{\G}{\mathcal{G}}

\tikzset{unlabeled_vertex/.style={inner sep=1.7pt, outer sep=0pt, circle, fill}}
\tikzset{labeled_vertex/.style={inner sep=2.2pt, outer sep=0pt, rectangle, fill=yellow, draw=black}}
\tikzset{edge_color0/.style={color=black,line width=1.2pt,opacity=0.5}}
\tikzset{edge_color1/.style={color=red,  line width=1.2pt,opacity=1}}
\tikzset{edge_color2/.style={color=blue, line width=1.2pt,opacity=1}}
\tikzset{edge_color3/.style={color=green,line width=1.2pt}}
\tikzset{edge_color4/.style={color=red,  line width=1.2pt,dotted}}
\tikzset{edge_color5/.style={color=blue, line width=1.2pt,dotted}}
\tikzset{edge_color6/.style={color=green, line width=1.2pt,dotted}}
\tikzset{edge_color7/.style={color=orange, line width=1.2pt}}
\tikzset{edge_color8/.style={color=gray, line width=1.2pt}}
\tikzset{edge_thin/.style={color=black}}
\tikzset{edge_hidden/.style={color=black,dotted,opacity=0}}
\tikzset{vertex_color1/.style={inner sep=1.7pt, outer sep=0pt, draw, circle, fill=red}}
\tikzset{vertex_color2/.style={inner sep=1.7pt, outer sep=0pt, draw, circle, fill=blue}}
\tikzset{vertex_color3/.style={inner sep=1.7pt, outer sep=0pt, draw, circle, fill=green}}
\tikzset{labeled_vertex_color1/.style={inner sep=2.2pt, outer sep=0pt, draw, rectangle, fill=red}}
\tikzset{labeled_vertex_color2/.style={inner sep=2.2pt, outer sep=0pt, draw, rectangle, fill=blue}}
\tikzset{labeled_vertex_color3/.style={inner sep=2.2pt, outer sep=0pt, draw, rectangle, fill=green}}

\tikzset{
vtx/.style={inner sep=1.1pt, outer sep=0pt, circle, fill,draw},
vtxl/.style={inner sep=1.1pt, outer sep=0pt, rectangle, fill=yellow,draw=black},
hyperedge/.style={fill=pink,opacity=0.5,draw=black},
}

%\begin{linenumbers}
\setlength{\parindent}{0pt}
\parskip=8pt
\begin{document}
\title{\bf\Large Phase transition of degenerate Tur\'{a}n problems in $p$-norms}
\date{\today}
\author[1]{Jun Gao\thanks{Research  supported by IBS-R029-C4. Email: \texttt{jungao@ibs.re.kr}}}
\author[2]{Xizhi Liu\thanks{Research  supported by ERC Advanced Grant 101020255. Email: \texttt{xizhi.liu.ac@gmail.com}}}
\author[3]{Jie Ma\thanks{Research supported by National Key Research and Development Program of China 2023YFA1010201 and National Natural Science Foundation of China grant 12125106. Email: \texttt{jiema@ustc.edu.cn}}}
\author[2]{Oleg Pikhurko\thanks{Research  supported by ERC Advanced Grant 101020255. Email: \texttt{o.pikhurko@warwick.ac.uk}}}
\affil[1]{Extremal Combinatorics and Probability Group (ECOPRO),
Institute for Basic Science (IBS),
Daejeon,
South Korea.}
\affil[2]{Mathematics Institute and DIMAP,
            University of Warwick,
            Coventry, CV4 7AL, UK}
\affil[3]{School of Mathematical Sciences,
            University of Science and Technology of China, 
            Hefei, Anhui, 230026, China}
\maketitle
%%%%%%%%%%%%%%%%%%%%%%%%%%%%%%%%%%%%%%%%%%%%%
\begin{abstract}
For a positive real number $p$, the $p$-norm $\norm{G}_p$ of a graph $G$ is the sum of the $p$-th powers of all vertex degrees. 
We study the maximum $p$-norm $\mathrm{ex}_{p}(n,F)$ of $F$-free graphs on $n$ vertices.
F\"{u}redi and K\"{u}ndgen \cite{FK06}  show that for every bipartite graph $F$, there exists a threshold $p_F$ such that for $p< p_{F}$, the order of $\mathrm{ex}_{p}(n,F)$ is governed by pseudorandom constructions, while for $p > p_{F}$, it is governed by star-like constructions, assuming a mild assumption on the growth rate of $\mathrm{ex}(n,F)$.
The main contribution of our paper is extending this result to hypergraph. Moreover, in the case of graph, our proof differs  from that in \cite{FK06}, offering the advantage of producing the correct constant factor when $p > p_{F}$.

When \( p = p_F \), F\"{u}redi and K\"{u}ndgen proved a general upper bound on \( \mathrm{ex}_{p}(n,F) \), tight up to a \( \log n \) factor, and conjectured that this factor is unnecessary. We confirm this conjecture for several well-studied bipartite graphs, including one-side degree-bounded graphs and families of short even cycles.  

\medskip

\noindent\textbf{Keywords:} degenerate Tur\'{a}n problem, degree powers, counting stars, phase transition.

% \medskip

% \noindent\textbf{MSC2010:} 05C35, 05D05, 05C65
\end{abstract}
%%%%%%%%%%%%%%%%%%%%%%%%%%%%%%%%%%%%%%%%%%%%%
\section{Introduction}
Given an integer $r\ge 2$, an \textbf{$r$-uniform hypergraph} (henceforth an \textbf{$r$-graph}) on a set $V$ is a subset $\mathcal{H}$ of $\binom{V}{r}\coloneqq \{ X\subseteq V: |X|=r \}$.
We identify a hypergraph $\mathcal{H}$ with its edge set and use $V(\mathcal{H})$ to denote its vertex set. 
The size of $V(\mathcal{H})$ is denoted by $v(\mathcal{H})$. 
%For a vertex $v\in V(\mathcal{H})$, the \textbf{link} $L_\mathcal{H}(v)$ of $v$ is defined as the $(r-1)$-graph consisting of all $(r-1)$-sets $S$ such that $S\cup \{v\}\in \mathcal{H}$. 
The \textbf{degree} $d_{\mathcal{H}}(v)$ of $v$ in $\mathcal{H}$ is the number of edges in $\mathcal{H}$ containing $v$.
%We use $\delta(\mathcal{H})$, $\Delta(\mathcal{H})$, and $d(\mathcal{H})$ to denote the \textbf{minimum}, \textbf{maximum}, and \textbf{average degree} of $\mathcal{H}$, respectively.
%We will omit the subscript $\mathcal{H}$ if it is clear from the context.

%For a positive integer $n$, we let $[n] \coloneqq \{1,\ldots, n\}$. We denote by $\binom{X}{r}$ the collection of all $r$-subsets of a set $X$. Unless otherwise stated, all asymptotic notations in this paper are considered with respect to $n$. Floors and ceilings will be omitted when they are not critical to the proofs. The base of $\log$ is assumed to be $2$.  

Given an $r$-graph $\mathcal{H}$ and a real number $p\ge 0$, let the \textbf{$p$-norm} of $\mathcal{H}$ be defined as
\begin{align*}
    \norm{\mathcal{H}}_{p}
    \coloneqq \sum_{v\in V(\mathcal{H})} d_{\mathcal{H}}^{p}(v),
\end{align*}
where, for convenience, we write $d_{\mathcal{H}}^{p}(v) \coloneqq \left(d_{\mathcal{H}}(v)\right)^{p}$.

Given a family $\mathcal{F}$ of $r$-graphs, we say an $r$-graph $\mathcal{H}$ is \textbf{$\mathcal{F}$-free}
if it does not contain any member of $\mathcal{F}$ as a subgraph.
The \textbf{$p$-norm Tur\'{a}n number} of $\mathcal{F}$ is defined as 
\begin{align*}
    \mathrm{ex}_{p}(n,\mathcal{F})
    \coloneqq \max\left\{\norm{\mathcal{H}}_{p} \colon \text{$v(\mathcal{H}) = n$ and $\mathcal{H}$ is $\mathcal{F}$-free}\right\}.
\end{align*}
The case $p=1$ corresponds to the \textbf{Tur\'{a}n number} $\mathrm{ex}(n, \mathcal{F})$ of $\mathcal{F}$ (differing only by a multiplicative factor of $r$), which represents the maximum number of edges in an $n$-vertex $\mathcal{F}$-free $r$-graph.

Extending the seminal work of Tur\'{a}n~\cite{Tur41}, Caro--Yuster~\cite{CY00,CY00arxiv} initiated\footnote{According to the Introduction in~\cite{FK06}, it seems that $\mathrm{ex}_{p}(n,K_t)$ was already considered by Erd{\H o}s in the 1970s (see~\cite{Erdos70}).} the study of $p$-norm Tur\'{a}n problem for graphs by determining the value of $\mathrm{ex}_{p}(n,K_{\ell+1})$ for $p \ge 1$.
This line of research has since been extended to various other graphs and hypergraphs, as explored in works such as~\cite{Nik09,BN12,LLQS19,BCL22,BCL22b,Zha22,Ger24,CIDLLP24}.
In this work, we focus on the case where $\mathcal{F}$ is degenerate. 

The \textbf{Tur\'{a}n density} of $\mathcal{F}$ is defined as $\pi(\mathcal{F})\coloneq \lim_{n\to\infty}\mathrm{ex}(n,\mathcal{F})/{n\choose r}$. 
A family $\mathcal{F}$ of $r$-graphs is called \textbf{degenerate} if $\pi(\mathcal{F}) = 0$. 
According to a classical theorem of Erd\H{o}s~\cite{Erdos64}, this is equivalent to stating that $\mathcal{F}$ contains at least one $r$-partite $r$-graph. 
Determining the growth rate of $\mathrm{ex}(n, \mathcal{F})$ for degenerate families is a central and notoriously difficult topic in Extremal Combinatorics, and it remains unresolved for most families.
For example, the Even Cycle Problem proposed by Erd\H{o}s~\cite{E64,BS74}, which asks for the exponent of $\mathrm{ex}(n,C_{2k})$, is still open for every $k$ not in $\{2,3,5\}$ (see e.g.~\cite{ERS66,Ben66,Wen91,LU93,LUW99}).  
For more results on degenerate Tur\'{a}n problems,  we refer the reader to the survey~\cite{FS13}.

%For an $r$-partite $r$-graph $F$, a set $S \subseteq V(F)$ is \textbf{independent} in $F$ if it contains no edge of $F$.
For an $r$-partite $r$-graph $F$, the \textbf{partition number} $\tau_{\mathrm{part}}(F)$ of $F$ is defined as the the minimum size of a set $S_1\subseteq V(F)$ such that $V(F)\setminus S_1$ can be partitioned into $r-1$ sets $S_2, \ldots, S_{r}$, with each edge of $F$ containing exactly one vertex from each $S_i$.
The \textbf{independent covering number} $\tau_{\mathrm{ind}}(F)$ of $F$ is defined as the minimum size of a set $S$ such that every edge of $F$ contains exactly one vertex from $S$.
It is clear from the definition that  $\tau_{\mathrm{ind}}(F) \le \tau_{\mathrm{part}}(F)$ for every $r$-partite $r$-graph $F$, and $\tau_{\mathrm{ind}}(F) = \tau_{\mathrm{part}}(F)$ for every bipartite graph $F$.

% The following simple fact provides two general lower bounds for $\mathrm{ex}_{p}(n,F)$ in terms of the parameters defined above.
Given the definitions that we have introduced, we can immediately derive the following two general lower bounds for $\mathrm{ex}_{p}(n,F)$. 
\begin{fact}\label{FACT:r-gp-p-norm-lower-bound}
    Let $r \ge 2$ be an integer and $F$ be an $r$-partite $r$-graph. 
    For every real number $p\geq 1$, we have 
    \begin{align*}
        \mathrm{ex}_{p}(n,F)
        \ge \max\left\{n \left(\frac{r\cdot \mathrm{ex}(n,F)}{n}\right)^{p},~ \left(\tau_{\mathrm{ind}}(F) - 1\right) \binom{n - \tau_{\mathrm{ind}}(F) + 1}{r-1}^{p}\right\}. 
    \end{align*}
\end{fact}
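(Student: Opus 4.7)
The statement asserts two independent lower bounds, so my plan is to establish them separately and then take the maximum.

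For the first bound $n(r\cdot\mathrm{ex}(n,F)/n)^p$, I would start from any $F$-free $r$-graph $\mathcal{H}_0$ on $n$ vertices with $\mathrm{ex}(n,F)$ edges (such $\mathcal{H}_0$ exists by definition). Its degree sequence $(d_1,\dots,d_n)$ satisfies $\sum_i d_i = r\cdot e(\mathcal{H}_0) = r\cdot\mathrm{ex}(n,F)$, so the average degree equals $r\cdot\mathrm{ex}(n,F)/n$. Since $p\ge 1$, the function $x\mapsto x^p$ is convex, and Jensen's inequality yields
\[
\norm{\mathcal{H}_0}_p=\sum_{i=1}^n d_i^p \;\ge\; n\left(\frac{1}{n}\sum_{i=1}^n d_i\right)^{p}=n\left(\frac{r\cdot\mathrm{ex}(n,F)}{n}\right)^{p},
\]
which is the desired bound.

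For the second bound $(\tau_{\mathrm{ind}}(F)-1)\binom{n-\tau_{\mathrm{ind}}(F)+1}{r-1}^{p}$, I would give an explicit construction. Set $k\coloneqq \tau_{\mathrm{ind}}(F)$, fix a ``hub set'' $S\subseteq[n]$ with $|S|=k-1$, and let $\mathcal{H}^{\star}$ consist of all $r$-edges $e\in\binom{[n]}{r}$ with $|e\cap S|=1$ (equivalently, each edge contains exactly one hub and $r-1$ non-hub vertices). Each vertex of $S$ then has degree exactly $\binom{n-k+1}{r-1}$, so
\[
\norm{\mathcal{H}^{\star}}_p \;\ge\; \sum_{v\in S}d_{\mathcal{H}^{\star}}^p(v)=(k-1)\binom{n-k+1}{r-1}^{p}.
\]
It remains to verify that $\mathcal{H}^{\star}$ is $F$-free. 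If on the contrary some copy $F\subseteq\mathcal{H}^{\star}$ existed, then every edge of this copy would meet $S$ in exactly one vertex, so $V(F)\cap S$ would be an independent cover of $F$ in the sense of the definition of $\tau_{\mathrm{ind}}(F)$. Hence $|V(F)\cap S|\ge \tau_{\mathrm{ind}}(F)=k$, contradicting $|S|=k-1$.

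There is no real obstacle here: the first bound is a one-line application of Jensen's inequality, and the second is a one-construction argument whose only subtlety is to check that the ``hub'' construction respects the independent covering definition. Combining the two bounds gives the stated maximum.
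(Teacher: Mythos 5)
Your proof is correct and takes essentially the same approach as the paper: the first bound follows by taking an extremal $F$-free $r$-graph and applying convexity (the paper cites Corollary~\ref{CORO:p-q-norm-Holder}, which is the same power-mean/Jensen step), and your $\mathcal{H}^{\star}$ is exactly the star-like $r$-graph $S^{r}(n,t)$ with $t=\tau_{\mathrm{ind}}(F)-1$ that the paper names as the extremal construction for the second bound, with the same $F$-freeness argument via the definition of $\tau_{\mathrm{ind}}$.
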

The first lower bound arises from the construction for $\mathrm{ex}(n,\mathcal{F})$ as well as convexity (see Corollary~\ref{CORO:p-q-norm-Holder}). 
The second lower bound is based on the star-like $r$-graph $S^{r}(n,t)$ for $t=\tau_{\mathrm{ind}}(F)-1$, where
\begin{align*}
    S^{r}(n,t)
    \coloneqq \left\{e\in \binom{[n]}{r} \colon |e\cap [t]| = 1\right\}, \quad \text{and} \quad [n]\coloneqq \{1,\dots,n\}.
\end{align*}

Our work is motivated by the combination of the following facts in graphs.
For $p=1$, the lower bound constructions for $\mathrm{ex}_{1}(n,\mathcal{F})$ often exhibit certain pseudorandom properties (see e.g.~\cite{KRS96,ARS99,MYZ18,PZ21}) and, in particular, are almost regular, meaning that the maximum and minimum degrees differ by only a constant factor. 
In contrast, works of Caro--Yuster~\cite{CY00}, Nikiforov~\cite{Nik09}, and Gerbner~\cite{Ger24} on even cycles and complete bipartite graphs show that for large $p$, the lower bound construction for $\mathrm{ex}_{p}(n,\mathcal{F})$ are highly structured and resemble $S^{2}(n,t)$ for some appropriate choice of $t$.  

This contrast suggests that a general phenomenon (see Figure~\ref{fig:Phi-k-6}) may hold$\colon$
for every degenerate family $\mathcal{F}$ of $r$-graphs with $\mathrm{ex}(n,\mathcal{F}) = \Omega(n^{1+\alpha})$ for some $\alpha > r-2$, there exists a threshold $p_{\mathcal{F}} > 1$ such that, for $p \in (1, p_{\mathcal{F}})$, $\mathrm{ex}_{p}(n,\mathcal{F}) = O\left(n \left(\frac{\mathrm{ex}(n,\mathcal{F})}{n}\right)^{p}\right)$, while for $p > p_{\mathcal{F}}$, $\mathrm{ex}_{p}(n,\mathcal{F}) = O\left(n^{p(r-1)}\right)$. 
F\"{u}redi and K\"{u}ndgen \cite{FK06} show that this holds for $r=2$. In the following theorem, we show that this holds for all $r\ge 2$.

%%%%%%%%%%%%%%%%%%%%%%%%%%
\begin{figure}[H]
\centering
\begin{tikzpicture}[xscale=7,yscale=7]
\draw [line width=1pt, ->] (0,0)--(0.73,0);
\draw [line width=1pt, ->] (0,0)--(0,0.7);
%%%%%%%%%%%%%%%%%%%%%%%%%%%%%%%%%%%,dash pattern=on 0.5pt off 0.5pt
%
\draw[line width=1.2pt, color=magenta, dash pattern=on 1pt off 1.2pt] (0, 1/15) -- (1/10, 1/10) -- (1/5, 1/5) -- (3/5+1/30, 3/5+1/30);
\draw[line width=1.2pt, color=cyan, dash pattern=on 0.8pt off 1pt] (0, 1/10) --  (1/5, 1/5) -- (3/5+1/30, 3/5+1/30);
\draw[line width=1.2pt, color=green, dash pattern=on 0.6pt off 0.8pt] (0, 2/15) -- (1/5, 4/15) -- (2/5, 2/5) -- (3/5+1/30, 3/5+1/30);
\draw[line width=1.2pt, color=green, dash pattern=on 0.8pt off 1pt] (0.9, 0.64) -- (1.02, 0.64); 
\draw[line width=1.2pt, color=cyan, dash pattern=on 0.8pt off 1pt] (0.9, 0.55) -- (1.02, 0.55); 
\draw[line width=1.2pt, color=magenta, dash pattern=on 0.8pt off 1pt] (0.9, 0.46) -- (1.02, 0.46); 
\begin{small}
\draw[color=uuuuuu] (1.02+0.17,0.59+0.06) node {$\frac{\log \mathrm{ex}_{p}(n,K_{3,3})}{\log n}$};
\draw[color=uuuuuu] (1.02+0.17,0.5+0.06) node {$\frac{\log \mathrm{ex}_{p}(n,C_4)}{\log n}$};
\draw[color=uuuuuu] (1.02+0.17,0.41+0.06) node {$\frac{\log \mathrm{ex}_{p}(n,C_6)}{\log n}$};
\draw [fill=uuuuuu] (0, 1/10) circle (0.2pt);
%\draw[color=uuuuuu] (0-0.05,1/10) node {$3/2$};
\draw [fill=uuuuuu] (0, 1/15) circle (0.2pt);
%\draw[color=uuuuuu] (0-0.05,1/15) node {$4/3$};
\draw [fill=uuuuuu] (1/10, 1/10) circle (0.2pt);
\draw [fill=uuuuuu] (0, 2/15) circle (0.2pt);
\draw [fill=uuuuuu] (1/5, 1/5) circle (0.2pt);
\draw [fill=uuuuuu] (2/5, 2/5) circle (0.2pt);
% \draw [fill=uuuuuu] (3/5, 3/5) circle (0.2pt);
\draw [fill=uuuuuu] (0, 1/5) circle (0.2pt);
\draw[color=uuuuuu] (0-0.05,1/5) node {$2$};
\draw [fill=uuuuuu] (0, 2/5) circle (0.2pt);
\draw[color=uuuuuu] (0-0.05,2/5) node {$3$};
\draw [fill=uuuuuu] (0, 3/5) circle (0.2pt);
\draw[color=uuuuuu] (0-0.05,3/5) node {$4$};
\draw [fill=uuuuuu] (0,0) circle (0.2pt);
\draw[color=uuuuuu] (0,0-0.05) node {$1$};
\draw [fill=uuuuuu] (1/5,0) circle (0.2pt);
\draw[color=uuuuuu] (1/5,0-0.05) node {$2$};
\draw [fill=uuuuuu] (2/5,0) circle (0.2pt);
\draw[color=uuuuuu] (2/5,0-0.05) node {$3$};
\draw [fill=uuuuuu] (3/5,0) circle (0.2pt);
\draw[color=uuuuuu] (3/5,0-0.05) node {$4$};
\draw[color=uuuuuu] (0.65+0.1,0-0.05) node {$p$};
%\draw[color=uuuuuu] (0+0.14,0.6+0.08) node {exponent};
\end{small}
\end{tikzpicture}
\caption{Exponents of $\mathrm{ex}_{p}(n,K_{3,3})$, $\mathrm{ex}_{p}(n,C_{4})$, and $\mathrm{ex}_{p}(n,C_{6})$.}
\label{fig:Phi-k-6}
\end{figure}
%%%%%%%%%%%%%%%%%%%%%%%%%%

For a family $\mathcal{F}$ of $r$-graphs, we define \begin{align*}
    \tau_{\mathrm{part}}(\mathcal{F})
    & \coloneqq \min\left\{\tau_{\mathrm{part}}(F) \colon \text{$F \in \mathcal{F}$ is $r$-partite}\right\}
    \quad\text{and}\quad \\
    \tau_{\mathrm{ind}}(\mathcal{F})
    & \coloneqq \min\left\{\tau_{\mathrm{ind}}(F) \colon \text{$F \in \mathcal{F}$ is $r$-partite}\right\}.
\end{align*}
\begin{theorem}\label{THM:main-r-graph}
    Let $r \ge 2$ be an integer and $p > 1$ be a real number. 
    Suppose that $\mathcal{F}$ is a degenerate family of $r$-graphs satisfying $\mathrm{ex}(n,\mathcal{F}) = O(n^{1+\alpha})$ for some constant $\alpha \in [r-2, r-1)$. 
    Then there exists a constant $C_{\mathcal{F}}>0$ such that 
    \begin{align*}
        \mathrm{ex}_{p}(n,\mathcal{F})
        \le  \begin{cases}
            C_{\mathcal{F}} \cdot n^{1+p \alpha}, & \text{if}\quad 1 < p < \frac{1}{r-1-\alpha}, \\
            \left(\tau_{\mathrm{part}}(\mathcal{F}) -1+o(1)\right)\binom{n}{r-1}^{p}, & \text{if}\quad p > \frac{1}{r-1-\alpha}.
        \end{cases}
    \end{align*}
    %
    % as $n \to \infty$. 
    In particular, for $r=2$, we have, for every $p > \frac{1}{1-\alpha}$, 
    \begin{align*}
        \mathrm{ex}_{p}(n,\mathcal{F})
        = \left(\tau_{\mathrm{ind}}(\mathcal{F})-1+o(1)\right)n^{p}. 
        %\quad\text{as}\quad n\to \infty.
    \end{align*}
\end{theorem}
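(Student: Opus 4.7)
The plan is to treat the two regimes of $p$ separately, matching the two lower bounds in Fact~\ref{FACT:r-gp-p-norm-lower-bound}; each calls for a distinct $p$-norm adaptation of a classical tool flagged in the abstract.

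\textbf{Pseudo-random regime $1<p<\frac{1}{r-1-\alpha}$.} I would use a $p$-norm version of the Erd\H{o}s--Simonovits Regularisation Theorem: from any $\mathcal{F}$-free $r$-graph $\mathcal{H}$ on $n$ vertices, extract an $\mathcal{F}$-free sub-hypergraph $\mathcal{H}'$ on $n'\le n$ vertices whose degrees are all $\Theta(d)$ and for which $\|\mathcal{H}'\|_p=\Omega(\|\mathcal{H}\|_p)$ up to an absolute constant. Almost regularity together with the hypothesis $\mathrm{ex}(n',\mathcal{F})=O(n'^{1+\alpha})$ applied to $e(\mathcal{H}')=\Theta(n'd/r)$ force $d=O(n'^\alpha)$, so that $\|\mathcal{H}\|_p=O(\|\mathcal{H}'\|_p)=O(n'd^p)=O(n'^{1+p\alpha})\le O(n^{1+p\alpha})$, giving the constant $C_{\mathcal{F}}$.

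\textbf{Star-like regime $p>\frac{1}{r-1-\alpha}$.} Choose a threshold $D=\binom{n-1}{r-1}/\omega(n)$ with $\omega(n)\to\infty$ slowly enough that $\omega(n)^p\gg n$; then the set $V_{\mathrm{low}}:=\{v:d(v)<D\}$ contributes at most $n\cdot D^p=o(\binom{n}{r-1}^p)$. It remains to show that at most $\tau_{\mathrm{part}}(\mathcal{F})-1$ vertices can have degree $\ge D$, which I would establish by a Dependent Random Choice argument: given $\tau_{\mathrm{part}}(\mathcal{F})$ vertices of degree $\ge D$, DRC-based averaging in their common link produces an $(r-1)$-uniform sub-hypergraph rich enough to embed the remaining vertices of some $F\in\mathcal{F}$ using its optimal $r$-partition, the hypothesis $p>\frac{1}{r-1-\alpha}$ being exactly the condition that makes the DRC expectation dominate the count of ``bad'' sub-configurations. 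Summing the two contributions yields $\|\mathcal{H}\|_p\le(\tau_{\mathrm{part}}(\mathcal{F})-1+o(1))\binom{n}{r-1}^p$. When $r=2$ this reduces to $(\tau_{\mathrm{ind}}(\mathcal{F})-1+o(1))n^p$, and the matching lower bound $\|S^{2}(n,\tau_{\mathrm{ind}}(\mathcal{F})-1)\|_p=(\tau_{\mathrm{ind}}(\mathcal{F})-1+o(1))n^p$ from Fact~\ref{FACT:r-gp-p-norm-lower-bound} promotes the inequality to an equality.

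The main obstacle is extracting the \emph{sharp} constant $\tau_{\mathrm{part}}(\mathcal{F})-1$ in the star-like regime: direct K\H{o}v\'ari--S\'os--Tur\'an or inclusion-exclusion arguments typically yield loose constants, so the DRC must be engineered so that \emph{any} $\tau_{\mathrm{part}}(\mathcal{F})$ sufficiently high-degree vertices already force a copy of some $F\in\mathcal{F}$ via its optimal $r$-partition. The interplay between the chosen threshold $D$ and the DRC expectation is most delicate near the phase transition $p=\frac{1}{r-1-\alpha}$; the pseudo-random regime is technically lighter but still requires the $p$-norm regularisation to lose only an absolute constant (no polylogarithmic factor) in the transfer $\|\mathcal{H}\|_p\mapsto\|\mathcal{H}'\|_p$.
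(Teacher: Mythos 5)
Your plan for the pseudo-random regime is in the right spirit and matches the paper's use of a $p$-norm regularisation lemma, though the transfer step is slightly off: the regularisation preserves the \emph{normalised} quantity $\Phi(\mathcal{K})=\norm{\mathcal{K}}_p/v(\mathcal{K})^{1+p\alpha}$ (up to a factor $1-\varepsilon$), not $\norm{\mathcal{K}}_p$ itself, and since the extracted subgraph $\mathcal{H}'$ can have $v(\mathcal{H}')\ll n$, the inequality $\norm{\mathcal{H}}_p=O(\norm{\mathcal{H}'}_p)$ is generally false. The paper avoids this by arguing by contradiction: if $\norm{\mathcal{G}}_p>C_{\mathcal{F}}n^{1+p\alpha}$, regularisation yields $\mathcal{H}'$ on $m$ vertices with $|\mathcal{H}'|>Cm^{1+\alpha}$, contradicting $\mathcal{F}$-freeness. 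This can be repaired in your framework, so it is a presentational rather than structural defect.

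The star-like regime is where the real gap lies. Your central claim is that at most $\tau_{\mathrm{part}}(\mathcal{F})-1$ vertices can have degree $\ge D=\binom{n-1}{r-1}/\omega(n)$ with $\omega(n)^p\gg n$; this is false. For $r=2$, $F=K_{2,2}$ (so $\tau_{\mathrm{ind}}=2$), $p=3$, one may take $\omega(n)=n^{0.4}$, $D=n^{0.6}$: a disjoint union of two stars, each of $n^{0.6}$ leaves, is $C_4$-free and has two vertices of degree $\ge D$. One can in fact pack $\Theta(n^{0.4})$ such stars, and the incidence graph of a projective plane is $C_4$-free with \emph{every} vertex of degree $\Theta(\sqrt n)\ge D$ when $\omega(n)\le\sqrt n$. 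So there is no hope of a DRC argument placing an embeddable structure inside the common link of $\tau_{\mathrm{part}}(\mathcal{F})$ high-degree vertices, because those vertices may have disjoint links. What is actually true -- and what the paper proves (Claims~\ref{CLAIM:r-gp-p-large-U-upper} and~\ref{CLAIM:r-gp-p-large-tilde-H}) -- is the much weaker statement that the \emph{sum of degrees} over the high-degree set $U$ (for a much higher threshold $n^{r-1-\delta_1}$) is at most $(\tau_{\mathrm{part}}(\mathcal{F})-1+o(1))\binom{n}{r-1}$. This is obtained not by DRC but from a semibipartite Zarankiewicz bound (Proposition~\ref{PROP:hypergraph-KST-Zaran}) applied to the bipartite incidence between $U$ and the low-degree part, after first using a random-sample argument to show $|U|$ itself is tiny. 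The $p$-norm contribution of $U$ is then controlled by the elementary inequality $\sum_{v\in U}d^p(v)\le\left(\sum_{v\in U}d(v)\right)\binom{n}{r-1}^{p-1}$, and the remaining low-degree part is handled by dropping to a smaller exponent $\hat p<p_\ast$ via Fact~\ref{FACT:p-q-norm-upper} and invoking the already-established pseudo-random bound -- a reduction that also clarifies where the hypothesis $p>\frac{1}{r-1-\alpha}$ enters, which in your sketch is only gestured at. As written, your high-degree-vertex count claim cannot be repaired without being replaced by the degree-sum bound, which changes the argument in an essential way.
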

\textbf{Remarks.}
\begin{itemize}
    \item The Rational Exponent Conjecture of Erd{\H o}s--Simonovits (see~{\cite[Conjecture~1.6]{FS13}}) states that for every degenerate finite family $\mathcal{F}$ of graphs, there exist a rational number $\alpha$ and a constant $c>0$ such that
    \begin{align*}
        \lim_{n\to \infty}\frac{\mathrm{ex}(n,\mathcal{F})}{n^{1+\alpha}} = c.
    \end{align*}
    Note that by Corollary~\ref{CORO:p-q-norm-Holder}, if this conjecture holds,  then Theorem~\ref{THM:main-r-graph} is tight in the exponent for every $p \in \left(1, \frac{1}{r-1-\alpha}\right)$ when $r=2$.
    \item If $\mathrm{ex}(n,\mathcal{F}) = O(n^{1+\beta})$ for some $\beta \le r-2$, then by taking $\alpha = r-2$ in Theorem~\ref{THM:main-r-graph}, we obtain $\frac{1}{r-1-\alpha} = 1$, and hence, 
    \begin{align*}
        \mathrm{ex}_{p}(n,\mathcal{F}) 
        \le \left(\tau_{\mathrm{part}}(\mathcal{F}) -1+o(1)\right)\binom{n}{r-1}^{p}
        \quad\text{for every}\quad p \ge 1. 
    \end{align*}
    This bound is tight in the exponent unless $\mathcal{F}$ contains an $r$-graph $F$ with $\tau_{\mathrm{part}}(F) = 1$. In that case, it is straightforward to show that, for $r=2$, either $\mathrm{ex}_{p}(n,\mathcal{F}) = \Theta(n)$ (if $\mathrm{ex}(n,\mathcal{F})  = \Theta(n)$) or $\mathrm{ex}_{p}(n,\mathcal{F}) = \Theta(1)$ (if $\mathrm{ex}(n,\mathcal{F}) = \Theta(1)$) for every $p \ge 1$. The case $r \ge 3$ seems to be more complex, even in the special case of intersection problems (when each forbidden $r$-graph has only $2$ edges), see~\cite{FT16} for a survey.
\end{itemize}

For $p$ at the threshold, i.e. for $p = \frac{1}{r-1-\alpha}$, 
F\"{u}redi and K\"{u}ndgen \cite{FK06} prove a general upper bound that is tight up to a $\log n$ factor for $\mathrm{ex}_{p}(n,F)$ when $r=2$. In the following theorem, we generalize this result to $r\ge 3$, where the base of $\log$ is assumed to be $2$.
%we prove the following general upper bound, which is tight up to a $\log n$ factor. 
%
\begin{theorem}\label{THM:r-graph-critical-point}
    Let $r \ge 2$ be an integer. 
    Suppose that $\mathcal{F}$ is a degenerate family of $r$-graphs satisfying $\mathrm{ex}(n,\mathcal{F}) = O(n^{1+\alpha})$ for some constant $\alpha \ge r-2$. 
    Then 
    \begin{align*}
        \mathrm{ex}_{p_{\ast}}(n,\mathcal{F})
        = O\left(n^{p_{\ast}(r-1)} \log n\right)
        \quad\text{where}\quad p_{\ast}\coloneqq \frac{1}{r-1-\alpha}.
    \end{align*}
\end{theorem}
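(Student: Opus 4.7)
The plan is to reduce, via a $p_*$-norm adaptation of the Erd\H os--Simonovits Regularization Theorem (announced in the introduction as one of the main tools developed in this paper), to the case where the host $r$-graph is nearly regular, and then invoke the hypothesis $\mathrm{ex}(n,\mathcal{F})=O(n^{1+\alpha})$ to bound the common degree. Concretely, the regularization should produce from an arbitrary $\mathcal{F}$-free $r$-graph $\mathcal{H}$ on $n$ vertices a subhypergraph $\mathcal{H}'\subseteq \mathcal{H}$ on $n'\le n$ vertices such that $\Delta(\mathcal{H}')\le K\,\delta(\mathcal{H}')$ for some absolute constant $K$, while retaining $\norm{\mathcal{H}'}_{p_*}\ge \norm{\mathcal{H}}_{p_*}/(C\log n)$. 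The single $\log n$ factor lost here is precisely what will produce the $\log n$ factor in the final bound, so it is essential that the regularization lose no more than a polylogarithmic amount.

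Given this tool, the remainder is a short calculation. Write $d$ for the average degree of $\mathcal{H}'$; almost-regularity places all degrees of $\mathcal{H}'$ in $[d/K, Kd]$, while $\mathcal{F}$-freeness together with the growth assumption yields $e(\mathcal{H}')=\Theta(n'd/r)\le C_0(n')^{1+\alpha}$ and therefore $d=O\bigl((n')^\alpha\bigr)$. Hence
\begin{align*}
\norm{\mathcal{H}'}_{p_*} \;\le\; n'\cdot (Kd)^{p_*} \;=\; O\bigl((n')^{1+\alpha p_*}\bigr) \;=\; O\bigl(n^{(r-1)p_*}\bigr),
\end{align*}
where the last equality uses the identity $1+\alpha p_* = (r-1)p_*$, which is equivalent to the defining relation $p_* = 1/(r-1-\alpha)$. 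Combining with the regularization step, $\norm{\mathcal{H}}_{p_*}\le C\log n\cdot \norm{\mathcal{H}'}_{p_*} = O(n^{(r-1)p_*}\log n)$, which is the claimed bound.

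The main obstacle is the $p_*$-norm regularization itself. A natural starting point, mimicking the classical Erd\H os--Simonovits strategy, is to first pass to a dyadic degree class $V_i=\{v:2^i\le d_{\mathcal{H}}(v)<2^{i+1}\}$ that carries at least a $1/\log n$ fraction of $\norm{\mathcal{H}}_{p_*}$ (there are only $O(\log n)$ such classes, so such an $i$ must exist), and then clean it up by iteratively discarding vertices whose removal does not significantly decrease the $p_*$-norm, until the surviving hypergraph has the desired degree ratio. The delicate point is controlling the $p_*$-norm rather than the edge count during this pruning and keeping the total loss polylogarithmic; I expect this to be exactly what the paper's regularization lemma achieves, after which the calculation above closes the argument.
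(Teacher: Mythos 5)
Your final calculation is fine as far as it goes (indeed $1+\alpha p_* = p_*(r-1)$ and $n'\le n$ close the numerics), and you are right that a dyadic decomposition is where the $\log n$ factor enters. But the step you defer to — a $p_*$-norm regularization producing a \emph{globally} almost-regular subhypergraph $\mathcal{H}'$ with $\Delta(\mathcal{H}')\le K\delta(\mathcal{H}')$ while losing only a factor $O(\log n)$ in $\norm{\cdot}_{p_*}$ — is precisely the hard part, and it is not what the paper's regularization lemma delivers. Lemma~\ref{LEMMA:regularization-p-norm} is stated and proved only for $p \in [1, \tfrac{1}{r-1-\alpha})$, i.e.\ strictly below $p_*$, and this restriction is structural: its iteration increases the normalized potential $\Phi(\mathcal{K}) = \norm{\mathcal{K}}_p / |V(\mathcal{K})|^{1+p\alpha}$ by a factor $K^{2\delta}$ per step with $\delta = \tfrac{1-p(r-1-\alpha)}{4}$, and that factor degenerates to $1$ exactly at $p=p_*$, so the termination argument (and hence the loss control) collapses. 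Consequently, you cannot appeal to the announced regularization tool at $p = p_*$, and your sketched pruning (dyadic class followed by iterative vertex deletions) would need its own delicate bookkeeping of the secondary degree drops caused by each deletion, which is exactly what the existing lemma handles only when $\delta > 0$.

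The paper's actual proof avoids full regularization entirely and is considerably lighter. It first passes to a balanced $r$-partite subgraph via Proposition~\ref{PROP:r-partite-subgp-p-norm}, losing only a constant factor in $\norm{\cdot}_{p_*}$. It then picks the part $V_1$ carrying at least a $1/r$ fraction of the norm and applies the \emph{one-sided} dyadic decomposition of Proposition~\ref{PROP:divide-log-bound} to $V_1$: this finds a subset $U\subseteq V_1$ of nearly uniform degree carrying a $1/\log n$ fraction of the norm, and converts the $p_*$-norm lower bound on $U$ into an edge-count lower bound
\begin{align*}
    |\mathcal{H}[U, V_2, \ldots, V_r]| \gtrsim |U|^{1+\alpha-(r-1)}\left(\frac{\norm{\mathcal{G}}_{p_*}}{\log n}\right)^{1/p_*}.
\end{align*}
Crucially, one never needs degrees on $V_2,\ldots,V_r$ to be controlled: the semibipartite Tur\'an bound of Proposition~\ref{PROP:hypergraph-KST-Zaran-b}, $\mathrm{ex}(m,n,\mathcal{F}) = O(m^{1+\alpha-(r-1)} n^{r-1})$, is then compared against this lower bound with $m = |U|$, and the $|U|$-dependence cancels, directly yielding $\norm{\mathcal{G}}_{p_*} = O(n^{p_*(r-1)}\log n)$. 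So the gap in your proposal is that it treats full almost-regularization at the critical exponent as available when it is not; the one-sided dyadic decomposition plus the Zarankiewicz-type bound is the substitute that makes the argument go through.
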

We conjecture that the $\log n$ factor in Theorem~\ref{THM:r-graph-critical-point} can be removed, thus extending the conjecture of F\"{u}redi and K\"{u}ndgen \cite{FK06} who made it for $r=2$.
In support of this conjecture, we prove it for several well-studied families of bipartite graphs in the following theorem. 
%and we prove this for several well-studied families of bipartite graphs in Theorem~\ref{THM:graph-critical-point}, providing evidence in support of the conjecture. 

Given a bipartite graph $F$ with two parts $V_1$ and $V_2$, we say $F$ is \textbf{$s$-bounded} if every vertex in $V_2$ has degree at most $s$.
A celebrated theorem of F{\" u}redi~\cite{Furedi91}, later refined by  Alon--Krivelevich--Sudakov~\cite{AKS03}, establishes that $\mathrm{ex}(n,F) = O(n^{2-\frac{1}{s}})$ for every $s$-bounded bipartite graph $F$. 
This bound is tight for graphs such as complete bipartite graphs $K_{s,t}$ when $t$ is sufficiently large~\cite{KRS96,ARS99,Bukh21}.

\begin{theorem}\label{THM:graph-critical-point}
    The following statements hold for sufficiently large $n$. 
    \begin{enumerate}[label=(\roman*)]
        \item\label{THM:graph-critical-point-1} $\mathrm{ex}_{\ell/(\ell-1)}(n,\{C_{4}, C_{6}, \ldots, C_{2\ell}\}) \le  765 n^{\frac{\ell}{\ell-1}}$ for every $\ell \ge 3$.
        \item\label{THM:graph-critical-point-2} $\mathrm{ex}_{3/2}(n,C_{6}) \le 2164 n^{3/2}$.
        \item\label{THM:graph-critical-point-3} Suppose that $F$ is an $s$-bounded bipartite graph. Then 
        \begin{align*}
            \mathrm{ex}_{s}(n,F) 
            \le 2 \left(\frac{|V(F)|^{s}}{s!} + |V(F)|\right)  n^{s}. 
        \end{align*}
    \end{enumerate}
\end{theorem}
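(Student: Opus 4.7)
The three parts require two distinct strategies. For part (iii), the plan is to use dependent random choice; for parts (i) and (ii), the plan is to combine the $p$-norm Erd\H{o}s--Simonovits regularization (developed earlier in the paper) with the classical Bondy--Simonovits minimum-degree bound.

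For part (iii), the plan is as follows. Suppose toward a contradiction that $G$ is $F$-free on $n$ vertices with $\|G\|_s>2(f^s/s!+f)n^s$, where $f:=|V(F)|$; I will produce a copy of $F$ in $G$. Sample $T=(t_1,\ldots,t_s)\in V(G)^s$ uniformly at random and set $U:=\bigcap_{i=1}^sN_G(t_i)$. Linearity gives $\EE[|U|]=\sum_v(d(v)/n)^s=\|G\|_s/n^s>2(f^s/s!+f)$. Call an $s$-subset $S\subseteq U$ \emph{bad} if $|N_G(S)|<f$, and let $X$ count the bad subsets; since $\mathbb{P}[S\subseteq U]=(|N_G(S)|/n)^s<(f/n)^s$ for each bad $S$, one gets $\EE[X]<\binom{n}{s}(f/n)^s\le f^s/s!$. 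So some realization satisfies $|U|-X\ge 2f$, and after deleting one vertex per bad subset I obtain $U^*\subseteq V(G)$ with $|U^*|\ge 2f$ such that every $\le s$-subset of $U^*$ has codegree at least $f$ in $G$. Writing $F=(V_1\cup V_2,E_F)$ with $V_2$ the $s$-bounded side, I will embed $V_1$ into $U^*$ as any $|V_1|$ distinct vertices and then greedily place each $v\in V_2$ in a vertex of $\bigcap_{u\in N_F(v)}N_G(\phi(u))$ avoiding the at most $f-1$ previously used images. The codegree lower bound guarantees at least one valid choice at every step, producing a copy of $F$.

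For parts (i) and (ii), I would first invoke the $p$-norm regularization: from $G$ with $\|G\|_p=A$ (where $p=\ell/(\ell-1)$ or $p=3/2$), extract a subgraph $G'$ on $n'\le n$ vertices whose minimum degree is at least a constant times $(A/n)^{1/p}$. Since $G'$ inherits the forbidden-cycle condition, the Bondy--Simonovits bound forces $\delta(G')\le C(n')^{1/\ell}$ in part (i) and $\delta(G')\le C'(n')^{1/3}$ in part (ii). Combining these two inequalities and using $n'\le n$ would yield $A\le (C/c)^p n^{1+p/\ell}$, which for $p=\ell/(\ell-1)$ reduces to $(C/c)^p n^{\ell/(\ell-1)}$, and for $p=3/2,\,\ell=3$ reduces to $(C'/c)^{3/2}n^{3/2}$. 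The explicit numerical constants $765$ and $2162$ would then emerge from careful tracking of the small constants in the regularization and in the Bondy--Simonovits step.

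The main obstacle will be producing the specific numerical constants $765$ and $2162$ in parts (i) and (ii), which requires both a quantitatively sharp form of the $p$-norm regularization (preserving a constant fraction of vertices with minimum degree within a constant factor of $(A/n)^{1/p}$) and the Bondy--Simonovits bound with its known sharp leading coefficient. Part (ii) will be especially delicate because $C_4$ is permitted, so the Moore-type BFS expansion at depth $2$ is no longer tree-like; I would instead exploit the specific sunflower structure of length-$3$ paths in a $C_6$-free graph (any two internally vertex-disjoint length-$3$ paths between a fixed pair close a $C_6$, so the set of all length-$3$ paths between $(u,v)$ is controlled by $d(u)+d(v)$) to extract the needed degree bound and then apply H\"older's inequality. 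For part (iii), the only subtle point is that the factor $2$ in the hypothesis is what guarantees $|U^*|\ge 2f$, leaving enough room to complete both the placement of $V_1$ and the greedy embedding of $V_2$.
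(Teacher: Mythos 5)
Your part~(iii) argument is correct and is essentially the paper's argument: both use dependent random choice with the threshold $2\left(|V(F)|^{s}/s! + |V(F)|\right)$. A small bonus of your version is that you sample the $s$ pivot vertices from $V(G)^{s}$ directly rather than first reducing to a balanced bipartite host, which gives the stated constant without any loss; otherwise the two proofs are interchangeable.

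For parts~(i) and~(ii) your plan does not work, for two separate reasons. First, the $p$-norm regularization (Lemma~\ref{LEMMA:regularization-p-norm}) only applies for $p$ \emph{strictly} below the critical exponent $p_{\ast}=\frac{1}{r-1-\alpha}$; its key parameter is $\delta = \frac{1-p(r-1-\alpha)}{4}$, and in parts~(i) and~(ii) you are exactly at the critical point: with $r=2$, $\alpha=1/\ell$, and $p=\ell/(\ell-1)$ one has $p(1-\alpha)=1$, hence $\delta=0$, so the required constant $K$ (which must satisfy $K^{\delta}\ge 2^{1+p}$) does not exist and conclusion~\ref{LEMMA:regularization-p-norm-2} becomes vacuous. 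Second, and independently, the lemma is an Erd\H{o}s--Simonovits $\Delta$-almost-regularization: it gives an \emph{upper} bound on $\Delta(\mathcal{H})$ commensurate with the average degree, not a lower bound on $\delta(\mathcal{H})$. Your proposed pipeline needs $\delta(G')\gtrsim(A/n)^{1/p}$ to feed into a Moore-type bound, and nothing in the paper's regularization produces that; in fact producing a min-degree-regularized subgraph while preserving a constant fraction of the $p$-norm is precisely the difficulty at $p=p_{\ast}$. What the paper actually does is quite different: it trims the high-degree vertices in two scales using the bipartite Zarankiewicz bounds of Naor--Verstra\"{e}te (Theorem~\ref{THM:NV05-C2k-Zaran}), lands on a subgraph $H$ with $\Delta(H)\le n^{1/\ell+\varepsilon}$ whose $p$-norm is still large, then lower-bounds the number of walks $W_{\ell+1}(H)$ via the Blakley--Roy-type convexity inequality of Erd\H{o}s--Simonovits/Sa\u{g}lam (Theorem~\ref{THM:path-hom}) combined with Proposition~\ref{PROP:W4-3/2-norm}, subtracts degenerate walks using the $\Delta$-bound, and concludes via pigeonhole that two distinct $\ell$-paths share endpoints, yielding a forbidden even cycle. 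Part~(ii) then reduces to part~(i) not via your sunflower idea but via the F\"{u}redi--Naor--Verstra\"{e}te theorem that every $C_{6}$-free bipartite graph has a $\{C_4,C_6\}$-free subgraph preserving at least half of every degree. Your worry that $C_4$ being allowed breaks the Moore-type argument is well-founded; the paper sidesteps it with this degree-preserving sub-graph rather than with a direct codegree argument.
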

In the following section, we present some preliminary results. 
In Section~\ref{SEC:regular-lemma}, we introduce a $p$-norm extension of the classical $\Delta$-almost-Regularization Theorem by Erd{\H o}s--Simonovits. 
The proofs of Theorems~\ref{THM:main-r-graph},~\ref{THM:r-graph-critical-point} and~\ref{THM:graph-critical-point} are provided in  Sections~\ref{SEC:proof-r-gp},~\ref{SEC:proof-r-gh-critical}, and~\ref{SEC:proof-graph-critical}, respectively. 
Section~\ref{SEC:remarks} includes some open problems and concluding remarks.

\textbf{Remark.}
After the preprint was posted on arXiv, D\'{a}niel Gerbner informed us that results similar to Theorems~\ref{THM:main-r-graph} and~\ref{THM:graph-critical-point} for the case $r=2$ were already proved by F\"{u}redi--K\"{u}ndgen in~{\cite[Theorem~3.3]{FK06}} using an elegant and concise argument.
Our proofs of both theorems appear to be quite different from the approach taken by F\"{u}redi--K\"{u}ndgen. In the case $p < 1/(1-\alpha)$, our proof relies on a $p$-norm adaption of the classical $\Delta$-almost-Regularization Theorem by Erd{\H o}s--Simonovits, which is of independent interest.  In the case $p > 1/(1-\alpha)$, our proof has the additional advantage of providing the tight main term.
%%%%%%%%%%%%%%%%%%%%%%%%%%%%%%%%%%%%%%%%%%%%%%
\section{Preliminaries}\label{SEC:Prelim}
We present some notation and preliminary results 
that will be used in the subsequent proofs.

Given an $r$-graph $\mathcal{H}$,
we use $\delta(\mathcal{H})$, $\Delta(\mathcal{H})$, and $d(\mathcal{H})$ to denote the \textbf{minimum}, \textbf{maximum}, and \textbf{average degree} of $\mathcal{H}$, respectively.
For a vertex $v\in V(\mathcal{H})$, the \textbf{link} $L_\mathcal{H}(v)$ of $v$ is defined as the $(r-1)$-graph consisting of all $(r-1)$-sets $S$ such that $S\cup \{v\}\in \mathcal{H}$.
We will omit the subscript $\mathcal{H}$ if it is clear from the context.

Unless otherwise stated, all asymptotic notations in this paper are considered with respect to $n$. Floors and ceilings will be omitted when they are not critical to the proofs.
The base of $\log$ is assumed to be $2$.
\begin{fact}\label{FACT:inequality-p-sum}
    Let $p \ge 1$ and $x \ge y \ge 0$ be real numbers.
    Then 
    \begin{align*}
        \left(x^p + y^p\right)^{1/p}
        \ge x 
        \ge \frac{x+y}{2}.
    \end{align*}
\end{fact}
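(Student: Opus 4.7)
The plan is to handle the two inequalities separately by elementary manipulation, with no auxiliary tools needed.

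For the first inequality $(x^p+y^p)^{1/p}\ge x$, the key observation is that $y^p\ge 0$ since $y\ge 0$, so $x^p+y^p\ge x^p$. The function $t\mapsto t^{1/p}$ is monotone nondecreasing on $[0,\infty)$ for any $p\ge 1$, so applying it to both sides yields $(x^p+y^p)^{1/p}\ge (x^p)^{1/p}=x$, where the last equality uses $x\ge 0$. For the second inequality $x\ge (x+y)/2$, multiplying through by $2$ reduces it to $2x\ge x+y$, i.e., $x\ge y$, which is exactly the hypothesis.

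There is essentially no obstacle here; the statement is purely algebraic and collects two trivial monotonicity facts in one place so that later arguments involving $p$-norms of two-term degree sums can cite them cleanly.
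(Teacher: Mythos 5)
Your proof is correct and is the natural, essentially unique elementary argument; the paper states this as a Fact without proof precisely because the verification is as routine as you describe.
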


\begin{fact}[Power Mean Inequality]\label{FACT:Holder}
    Let $p > q \geq 1$ be two real numbers and $x_1,\ldots, x_n$ be non-negative real numbers. 
    Then 
    \begin{align*}
        \left(\frac{\sum_{i\in [n]}x_i^p}{n}\right)^{1/p} 
        \ge \left(\frac{\sum_{i\in [n]}x_i^q}{n}\right)^{1/q}.
    \end{align*}
\end{fact}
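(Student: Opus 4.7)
The plan is to reduce this to a one-line application of Jensen's inequality. Since $p/q > 1$, the function $f(t) = t^{p/q}$ is convex on $[0,\infty)$. I would substitute $y_i \coloneqq x_i^q \ge 0$ for each $i \in [n]$ and apply Jensen to the uniform average of the $y_i$'s, obtaining
\[
\left(\frac{1}{n}\sum_{i\in[n]} x_i^q\right)^{p/q} \;=\; f\!\left(\frac{1}{n}\sum_{i\in[n]} y_i\right) \;\le\; \frac{1}{n}\sum_{i\in[n]} f(y_i) \;=\; \frac{1}{n}\sum_{i\in[n]} x_i^p.
\]
Raising both sides to the $1/p$-th power, which preserves the inequality because both sides are nonnegative and $p > 0$, then produces exactly the asserted bound.

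There is essentially no obstacle here: this is the classical power-mean inequality, and the entire argument rests on choosing the correct convex function. The only minor subtlety worth flagging is the convention $0^{p/q} = 0$, which is harmless in the nonnegative setting. An equivalent route that avoids citing Jensen as a black box is to apply H\"older's inequality to the pairing $\sum_i (x_i^q) \cdot 1$ with conjugate exponents $p/q$ and $p/(p-q)$; this yields $\sum_i x_i^q \le \left(\sum_i x_i^p\right)^{q/p} n^{(p-q)/p}$, which rearranges to the same intermediate estimate. Either presentation is a few lines, and I would choose whichever best matches the style used elsewhere in the preliminaries.
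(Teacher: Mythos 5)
Your proof is correct: the Jensen step with the convex map $t \mapsto t^{p/q}$ (for $p/q > 1$) applied to the averages of $y_i = x_i^q$ gives the inequality directly, and taking $1/p$-th roots is monotone. The paper simply states this as a known fact without supplying a proof, so there is nothing to compare against; either of your two routes (Jensen or H\"older with exponents $p/q$ and $p/(p-q)$) is standard and appropriate for a preliminaries section.
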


\begin{fact}[Minkowski's inequality]\label{FACT:Minkowski}
    Let $p \ge 1$ and $x_1,\ldots, x_n, y_1, \ldots, y_n$ be real numbers. 
    Then 
    \begin{align*}
        \left(\sum_{i=1}^n|x_i+y_i|^p\right)^{1/p} 
        \le \left(\sum_{i=1}^n|x_i|^p\right)^{1/p} + \left(\sum_{i=1}^n|y_i|^p\right)^{1/p}.
    \end{align*}
    In particular, for every $p \ge 1$ and $x, y \ge 0$, 
    \begin{align*}
        (x^{p} + y^{p})^{1/p} 
        \le x + y.
    \end{align*}
\end{fact}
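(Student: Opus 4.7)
The plan is to prove Minkowski's inequality via the classical route through Hölder's inequality, deducing the scalar special case $(x^p+y^p)^{1/p}\le x+y$ as a corollary of the $n=2$ case. First I would dispose of $p=1$, which is immediate by summing the pointwise triangle inequality $|x_i+y_i|\le |x_i|+|y_i|$ over $i$. We may also assume $\sum_{i=1}^n|x_i+y_i|^p>0$ throughout, since the inequality is otherwise trivial.

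For $p>1$, let $q=p/(p-1)$ denote the Hölder conjugate, so that $1/p+1/q=1$ and $(p-1)q=p$. The heart of the argument is the factorization $|x_i+y_i|^p=|x_i+y_i|\cdot|x_i+y_i|^{p-1}$ combined with the scalar triangle inequality, which yields
\[
\sum_{i=1}^n|x_i+y_i|^p \le \sum_{i=1}^n|x_i|\,|x_i+y_i|^{p-1} + \sum_{i=1}^n|y_i|\,|x_i+y_i|^{p-1}.
\]
Applying Hölder's inequality with exponents $p$ and $q$ to each sum on the right gives
\[
\sum_{i=1}^n|x_i|\,|x_i+y_i|^{p-1}\le \left(\sum_{i=1}^n|x_i|^p\right)^{1/p}\left(\sum_{i=1}^n|x_i+y_i|^{(p-1)q}\right)^{1/q},
\]
and the analogous bound with $y_i$ in place of $x_i$. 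Using $(p-1)q=p$ and dividing both sides of the combined bound by $\left(\sum_{i=1}^n|x_i+y_i|^p\right)^{1/q}$, the identity $1-1/q=1/p$ produces exactly the claimed vector inequality.

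For the scalar consequence $(x^p+y^p)^{1/p}\le x+y$ with $x,y\ge 0$, I would specialize to $n=2$ with the disjointly supported pairs $(x,0)$ and $(0,y)$: the left side becomes $(x^p+y^p)^{1/p}$ and the right side becomes $x+y$. Alternatively, this case admits an elementary direct proof via $(x+y)^p\ge x^p+y^p$ for $p\ge 1$, which reduces by homogeneity to showing $g(t):=(1+t)^p-1-t^p\ge 0$ for $t\ge 0$, an immediate consequence of $g(0)=0$ and $g'(t)=p\bigl((1+t)^{p-1}-t^{p-1}\bigr)\ge 0$. The main obstacle in the vector case is simply arranging the Hölder step with the right conjugate exponent and isolating the degenerate zero case; the rest is bookkeeping. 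One bookkeeping caveat: Hölder's inequality itself is not among the facts stated earlier in the excerpt, so in the final write-up I would either cite a standard reference or include the one-line derivation from Young's inequality $ab\le a^p/p+b^q/q$ applied to terms normalized by $\left(\sum|x_i|^p\right)^{1/p}$ and $\left(\sum|x_i+y_i|^p\right)^{1/q}$.
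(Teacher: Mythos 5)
The paper states Fact~\ref{FACT:Minkowski} as a classical, well-known inequality and provides no proof of its own, so there is no paper argument to compare against. Your proof is correct and is the standard derivation: dispose of $p=1$ by the triangle inequality, and for $p>1$ factor $|x_i+y_i|^p=|x_i+y_i|\cdot|x_i+y_i|^{p-1}$, bound the first factor by $|x_i|+|y_i|$, apply H\"older with conjugate exponents $p$ and $q=p/(p-1)$ (using $(p-1)q=p$), and divide through by $\bigl(\sum_i|x_i+y_i|^p\bigr)^{1/q}$ after handling the degenerate zero case. The specialization to the scalar inequality $(x^p+y^p)^{1/p}\le x+y$ via the disjointly supported pairs $(x,0)$ and $(0,y)$, as well as the alternative direct argument via $g(t)=(1+t)^p-1-t^p\ge 0$, are both sound. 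Given that the paper treats this as a black-box fact, in a final write-up you would most naturally just cite a standard reference rather than reproduce the H\"older machinery; your note that H\"older itself is not stated in the paper's preliminaries is the right instinct.
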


% \begin{fact}[Minkowski's inequality]\label{FACT:ordered}
%     Let $p \ge 1$ and $x_1,\ldots, x_n$ be nonnegative real numbers. 
%     Suppose that $\max\{x_1, \ldots, x_n\} \le \Delta$. 
%     Then 
%     \begin{align*}
%         \sum_{i\in [n]}x_i^{p}
%         \le \frac{\sum_{i\in [n]}x_i}{\Delta} \Delta^{p}.
%     \end{align*}
% \end{fact}

\begin{fact}\label{FACT:p-q-norm-upper}
    Let $p > q \geq 1$ be two real numbers and $\mathcal{H}$ be an $r$-graph on $n$ vertices. 
    Then 
    \begin{align*}
        \norm{\mathcal{H}}_p
        = 
        \sum_{v\in V(\mathcal{H})} d_{\mathcal{H}}^{p}(v)
        & =\sum_{v\in V(\mathcal{H})} d_{\mathcal{H}}^{q+(p-q)}(v) \\
        & \le \sum_{v\in V(\mathcal{H})} d_{\mathcal{H}}^{q}(v) \cdot \left(\Delta(\mathcal{H})\right)^{p-q} 
        = \norm{\mathcal{H}}_q \cdot \left(\Delta(\mathcal{H})\right)^{p-q}. 
    \end{align*}
    In particular, $\norm{\mathcal{H}}_p \le \norm{\mathcal{H}}_q \cdot n^{(r-1)(p-q)}$. 
\end{fact}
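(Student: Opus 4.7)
The displayed chain of equalities and the single inequality in the statement already almost constitute a proof, so the plan is to justify each step carefully. The approach is a pointwise bound on each summand followed by a trivial summation. First, I would fix an arbitrary vertex $v\in V(\mathcal{H})$ and observe that, since $p-q\ge 0$ and $0\le d_{\mathcal{H}}(v)\le \Delta(\mathcal{H})$, the function $x\mapsto x^{p-q}$ is nondecreasing on $[0,\infty)$, so $d_{\mathcal{H}}^{p-q}(v)\le \Delta(\mathcal{H})^{p-q}$. Multiplying this by the nonnegative factor $d_{\mathcal{H}}^{q}(v)$ yields the pointwise inequality $d_{\mathcal{H}}^{p}(v)=d_{\mathcal{H}}^{q}(v)\cdot d_{\mathcal{H}}^{p-q}(v)\le d_{\mathcal{H}}^{q}(v)\cdot \Delta(\mathcal{H})^{p-q}$. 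Summing over all $v\in V(\mathcal{H})$ and pulling the constant factor $\Delta(\mathcal{H})^{p-q}$ out of the sum immediately gives $\norm{\mathcal{H}}_p\le \norm{\mathcal{H}}_q\cdot \Delta(\mathcal{H})^{p-q}$, which is the main assertion.

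For the ``in particular'' bound, I would simply bound the maximum degree. In an $r$-graph on $n$ vertices, each vertex is contained in at most $\binom{n-1}{r-1}\le n^{r-1}$ edges, so $\Delta(\mathcal{H})\le n^{r-1}$. Substituting into the main inequality yields $\norm{\mathcal{H}}_p\le \norm{\mathcal{H}}_q\cdot n^{(r-1)(p-q)}$, as claimed.

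There is no genuine obstacle here; the only items worth noting are the degenerate cases, namely $\mathcal{H}=\emptyset$ (so $\Delta(\mathcal{H})=0$ and both norms vanish, with the standard convention $0^{0}=1$ when $p=q$) and $p=q$ (where all inequalities degenerate to equalities). Both are handled automatically by the pointwise argument.
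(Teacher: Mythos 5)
Your proof is correct and follows essentially the same route as the paper, which simply embeds the pointwise bound $d_{\mathcal{H}}^{p}(v)=d_{\mathcal{H}}^{q}(v)\cdot d_{\mathcal{H}}^{p-q}(v)\le d_{\mathcal{H}}^{q}(v)\cdot\Delta(\mathcal{H})^{p-q}$ into the displayed chain and then invokes $\Delta(\mathcal{H})\le\binom{n-1}{r-1}\le n^{r-1}$ for the final bound. Your write-up just makes these justifications explicit.
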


The following result is an immediate corollary of Fact~\ref{FACT:Holder}. 
\begin{corollary}\label{CORO:p-q-norm-Holder}
    Let $p > q \geq 1$ be two real numbers and $\mathcal{H}$ be an $n$-vertex $r$-graph.
    Then 
    \begin{align*}
        \left(\frac{\norm{\mathcal{H}}_p}{n}\right)^{1/p} 
        \ge \left(\frac{\norm{\mathcal{H}}_q}{n}\right)^{1/q}.
    \end{align*}
    Consequently, $\norm{\mathcal{H}}_p \ge n \left(\norm{\mathcal{H}}_q/n\right)^{p/q}$, and hence, 
    \begin{align}\label{equ:Turan-number-p-lower-bound}
        \mathrm{ex}_{p}(n,\mathcal{F}) 
        \ge n \left(\frac{\mathrm{ex}_{1}(n,\mathcal{F})}{n}\right)^{p}
        = n \left(\frac{r\cdot \mathrm{ex}(n,\mathcal{F})}{n}\right)^{p}. 
    \end{align}
\end{corollary}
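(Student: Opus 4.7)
The plan is to derive the corollary as an immediate consequence of the Power Mean Inequality (Fact~\ref{FACT:Holder}) applied to the degree sequence, followed by specialization to $q = 1$ and double counting to recover the Tur\'an-number statement.

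First, I would apply Fact~\ref{FACT:Holder} taking the real numbers $x_v \coloneqq d_{\mathcal{H}}(v)$ indexed by $v \in V(\mathcal{H})$ (of which there are exactly $n$), and with the same pair $p > q \ge 1$. By the definition of the $p$-norm this yields
\begin{align*}
    \left(\frac{\norm{\mathcal{H}}_p}{n}\right)^{1/p}
    = \left(\frac{\sum_{v\in V(\mathcal{H})} d_{\mathcal{H}}^{p}(v)}{n}\right)^{1/p}
    \ge \left(\frac{\sum_{v\in V(\mathcal{H})} d_{\mathcal{H}}^{q}(v)}{n}\right)^{1/q}
    = \left(\frac{\norm{\mathcal{H}}_q}{n}\right)^{1/q},
\end{align*}
which is the first displayed inequality. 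Raising both sides to the $p$-th power produces $\norm{\mathcal{H}}_p \ge n \left(\norm{\mathcal{H}}_q / n\right)^{p/q}$, the second claim.

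For inequality~\eqref{equ:Turan-number-p-lower-bound}, I would specialize to $q = 1$. Here the key identity is that, since every edge of an $r$-graph is counted once by each of its $r$ vertices,
\begin{align*}
    \norm{\mathcal{H}}_{1} = \sum_{v \in V(\mathcal{H})} d_{\mathcal{H}}(v) = r \cdot |\mathcal{H}|.
\end{align*}
Fix any $n$-vertex $\mathcal{F}$-free $r$-graph $\mathcal{H}^{\ast}$ achieving $|\mathcal{H}^{\ast}| = \mathrm{ex}(n,\mathcal{F})$, so that $\norm{\mathcal{H}^{\ast}}_{1} = r \cdot \mathrm{ex}(n,\mathcal{F})$. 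Applying the lower bound from the previous paragraph with $q = 1$ to $\mathcal{H}^{\ast}$, and then using that $\mathcal{H}^{\ast}$ is a feasible competitor in the definition of $\mathrm{ex}_{p}(n,\mathcal{F})$, gives
\begin{align*}
    \mathrm{ex}_{p}(n,\mathcal{F})
    \ge \norm{\mathcal{H}^{\ast}}_{p}
    \ge n \left(\frac{\norm{\mathcal{H}^{\ast}}_{1}}{n}\right)^{p}
    = n \left(\frac{r \cdot \mathrm{ex}(n,\mathcal{F})}{n}\right)^{p},
\end{align*}
as claimed.

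There is no real obstacle in this argument beyond correctly unwinding the definitions. The only substantive ingredient is the Power Mean Inequality itself; everything else is bookkeeping, and the only nontrivial bookkeeping step is the identification $\norm{\mathcal{H}}_{1} = r \cdot |\mathcal{H}|$ which converts the $q = 1$ version of the $p$-norm bound into the usual edge-count Tur\'an bound with the explicit factor of $r$.
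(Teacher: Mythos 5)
Your proof is correct and follows exactly the paper's (implicit) route: the paper states the corollary as an immediate consequence of the Power Mean Inequality (Fact~\ref{FACT:Holder}) applied to the degree sequence, and you simply spell out that application together with the routine $q=1$ specialization and the handshake identity $\norm{\mathcal{H}}_{1}=r|\mathcal{H}|$. No gaps.
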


Given an $r$-graph $\mathcal{H}$ and a vertex set $U\subseteq V(\mathcal{H})$, we use $\mathcal{H}[U]$ to denote the \textbf{induced subgraph} of $\mathcal{H}$ on $U$. 
Similarly, for $r$ pairwise disjoint vertex sets $V_1, \ldots, V_r \subseteq V(\mathcal{H})$, we use $\mathcal{H}[V_1, \ldots, V_r]$ to denote the collection of edges in $\mathcal{H}$ that contain exactly one vertex from each $V_i$.  
\begin{proposition}\label{PROP:random-sample}
    Let $r\ge 2$ be an integer and $p \ge 1$ be a real number. 
    Let $\mathcal{G}$ be an $r$-graph on an $n$-set $V$ and let  $U \subseteq V$ be a vertex set. 
    For every $m \le n$, there exists a set $W \subseteq V$ of size $m$ such that the induced subgraph $\mathcal{H} \coloneqq \mathcal{G}[U\cup W]$ satisfies 
    \begin{align*}
        \sum_{v\in U}d_{\mathcal{H}}^{p}(v)
        \ge (1+o_{m}(1))\left(\frac{m}{n}\right)^{p(r-1)} \sum_{v\in U}d_{\mathcal{G}}^{p}(v).
    \end{align*}
\end{proposition}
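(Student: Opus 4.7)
The plan is to sample $W$ uniformly at random from $\binom{V}{m}$, lower-bound $\mathbb{E}[\sum_{v \in U} d_{\mathcal{H}}^{p}(v)]$ by the claimed quantity, and then invoke the probabilistic method to extract a deterministic $W$ meeting the bound.

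For each $v \in U$ and each edge $e \in \mathcal{G}$ with $v \in e$, I set $s_e \coloneqq |e \setminus (\{v\} \cup U)|$, so $0 \le s_e \le r-1$. The event $e \subseteq U \cup W$ is equivalent to $e \setminus (\{v\} \cup U) \subseteq W$, and therefore
\[
\Pr[e \subseteq U \cup W] = \binom{n - s_e}{m - s_e}\binom{n}{m}^{-1} = \prod_{i=0}^{s_e - 1} \frac{m-i}{n-i} \ge \prod_{i=0}^{r-2} \frac{m-i}{n-i} = (1 + o_m(1)) \left(\frac{m}{n}\right)^{r-1},
\]
where the inequality uses that each factor is at most $1$ (since $m \le n$), so the product is minimized at $s_e = r-1$, and the final equality holds because $r$ is fixed. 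Summing over all edges through $v$ and using linearity of expectation yields $\mathbb{E}[d_{\mathcal{H}}(v)] \ge (1 + o_m(1))(m/n)^{r-1} d_{\mathcal{G}}(v)$.

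To pass from first to $p$-th moments, I apply Jensen's inequality to the convex function $x \mapsto x^p$, obtaining
\[
\mathbb{E}[d_{\mathcal{H}}^{p}(v)] \ge (\mathbb{E}[d_{\mathcal{H}}(v)])^p \ge (1 + o_m(1)) \left(\frac{m}{n}\right)^{p(r-1)} d_{\mathcal{G}}^{p}(v).
\]
Summing over $v \in U$ by linearity then gives the desired expectation bound, and a suitable deterministic $W$ exists by averaging. I do not anticipate any serious obstacle; the one mildly delicate point is justifying the asymptotic identity $\prod_{i=0}^{r-2}(m-i)/(n-i) = (1 + o_m(1))(m/n)^{r-1}$ with an $o_m(1)$ term independent of $n$. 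This follows from the pointwise bound $(m-i)/(n-i) \le m/n$ together with $\prod_{i=0}^{r-2}(1 - i/m) \ge 1 - (r-1)(r-2)/(2m)$, which provide matching upper and lower bounds on the product when $r$ is fixed and $m \to \infty$.
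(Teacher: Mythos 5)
Your proposal is correct and follows essentially the same route as the paper: sample $W$ uniformly from $\binom{V}{m}$, estimate $\mathbb{E}[d_{\mathcal{H}}(v)]$, apply Jensen to pass to $p$-th powers, and extract a good $W$ by averaging. The only cosmetic difference is that the paper lower-bounds $d_{\mathcal{H}}(v)$ by the number of link sets lying entirely inside $\mathbf{W}$ (giving an exact expectation of $(1+o_m(1))(m/n)^{r-1}d_{\mathcal{G}}(v)$), whereas you account for the vertices of each edge already lying in $U$ via the parameter $s_e$ and then worst-case to $s_e=r-1$; both yield the same asymptotic.
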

\begin{proof}[Proof of Proposition~\ref{PROP:random-sample}]
    Choose uniformly at random an $m$-set $\mathbf{W}$ from $V$. 
    For each $v\in U$, an edge $e \in L_{\mathcal{G}}(v)$ is contained in $\mathbf{W}$ with probability  
    \begin{align*}
        \mathbb{P}\left[e\subseteq \mathbf{W}\right]
        = \frac{\binom{n-(r-1)}{m-(r-1)}}{\binom{n}{m}}
        = (1+o_{m}(1))\left(\frac{m}{n}\right)^{r-1}. 
    \end{align*}
    For every $v \in U$, let $d_{\mathcal{G}}(v,\mathbf{W}) \coloneqq \left|L_{\mathcal{G}}(v) \cap \binom{\mathbf{W}}{r-1}\right|$, noting from the equation above that $\mathbb{E}\left[d_{\mathcal{G}}(v,\mathbf{W})\right] = (1+o_{m}(1))\left(\frac{m}{n}\right)^{r-1} d_{\mathcal{G}}(v)$. Combining this with Fact~\ref{FACT:Holder} and the linearity of expectation, we obtain 
    \begin{align*}
        \mathbb{E}\left[\sum_{v\in U}d_{\mathcal{G}}^{p}(v,\mathbf{W}) \right]
        & = \sum_{v\in U} \mathbb{E}\left[d_{\mathcal{G}}^{p}(v,\mathbf{W})\right] \\
        & \ge \sum_{v\in U} \mathbb{E}\left[d_{\mathcal{G}}(v,\mathbf{W})\right]^{p} \\
        & = \sum_{v\in U} \left((1+o_{m}(1))\left(\frac{m}{n}\right)^{r-1} d_{\mathcal{G}}(v)\right)^{p} \\
        & = (1+o_{m}(1))\left(\frac{m}{n}\right)^{p(r-1)} \sum_{v\in U}d_{\mathcal{G}}^{p}(v).
    \end{align*}
    Therefore, there exists a set $W \subseteq V$ of size $m$ such that the induced subgraph $\mathcal{H} \coloneqq \mathcal{G}[U\cup W]$ satisfies 
    $\sum_{v\in U}d_{\mathcal{H}}^{p}(v) \ge \sum_{v\in U}d_{\mathcal{G}}^{p}(v,W) 
    \ge (1+o_{m}(1))\left(\frac{m}{n}\right)^{p(r-1)} \sum_{v\in U}d_{\mathcal{G}}^{p}(v)$. 
\end{proof}

\begin{theorem}[Erd{\H o}s~\cite{Erdos64}]\label{THM:Erdos-exponent}
    For every degenerate family $\mathcal{F}$ of $r$-graphs, there exists a constant $\delta > 0$ such that 
    \begin{align*}
        \mathrm{ex}(n,\mathcal{F})
        = O\left(n^{r-\delta}\right).
    \end{align*}
\end{theorem}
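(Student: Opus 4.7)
The plan is to follow the classical supersaturation-counting strategy originally due to Erd\H{o}s. As noted in the text preceding the theorem, $\mathcal{F}$ being degenerate is equivalent to $\mathcal{F}$ containing at least one $r$-partite $r$-graph $F_{0}$ (the implication from $\pi(\mathcal{F})=0$ uses only that the complete balanced $r$-partite construction is $\mathcal{F}$-free if $\mathcal{F}$ contains no $r$-partite member). Since $\mathrm{ex}(n,\mathcal{F}) \le \mathrm{ex}(n,F_{0})$, it suffices to bound $\mathrm{ex}(n,F_{0})$. Any $r$-partite $r$-graph embeds into a complete $r$-partite $r$-graph $K^{(r)}_{s,\ldots,s}$ for some sufficiently large integer $s$, so the whole task reduces to proving that for every $r\ge 2$ and $s\ge 1$ there exists a constant $\delta_{r}=\delta_{r}(s)>0$ such that $\mathrm{ex}(n,K^{(r)}_{s,\ldots,s}) = O(n^{r-\delta_{r}})$.

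I would prove the reduced statement by induction on $r$, with the base case $r=2$ being the K\H{o}v\'ari--S\'os--Tur\'an theorem (giving $\delta_{2} = 1/s$). For the inductive step, let $\mathcal{H}$ be a $K^{(r)}_{s,\ldots,s}$-free $r$-graph on $n$ vertices with $m$ edges, and for each $(r-1)$-subset $S \in \binom{V(\mathcal{H})}{r-1}$ set $N(S) \coloneqq \{v\in V(\mathcal{H})\setminus S : S\cup \{v\}\in \mathcal{H}\}$, so that $\sum_{S}|N(S)| = r\cdot m$. The main step is a double count of pairs $(S,W)$ with $|S|=r-1$, $|W|=s$, $S\cap W=\emptyset$, and $W\subseteq N(S)$. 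Summing over $S$ and applying Jensen's inequality to $x\mapsto \binom{x}{s}$ bounds this count from below by a quantity of order $\binom{n}{r-1}\bigl(rm/\binom{n}{r-1}\bigr)^{s}/s!$. Summing over $W$ instead gives $\sum_{W} |\mathcal{L}_{W}|$, where
\[
    \mathcal{L}_{W} \coloneqq \left\{S \in \tbinom{V(\mathcal{H})\setminus W}{r-1} : S\cup \{w\}\in \mathcal{H} \text{ for every } w\in W\right\}
\]
is an auxiliary $(r-1)$-graph on $V(\mathcal{H})\setminus W$. By pigeonhole some $W$ realises $|\mathcal{L}_{W}|$ at least a $1/\binom{n}{s}$ fraction of this sum.

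The crux is the observation that $\mathcal{L}_{W}$ must be $K^{(r-1)}_{s,\ldots,s}$-free: any copy of $K^{(r-1)}_{s,\ldots,s}$ inside $\mathcal{L}_{W}$ automatically lives in $V(\mathcal{H})\setminus W$, so taking $W$ as the final part yields a forbidden $K^{(r)}_{s,\ldots,s}$ in $\mathcal{H}$. Invoking the inductive hypothesis $|\mathcal{L}_{W}| = O(n^{r-1-\delta_{r-1}})$ and comparing with the pigeonhole lower bound, a short computation yields $m = O(n^{r-\delta_{r-1}/s})$, so one may take $\delta_{r} \coloneqq \delta_{r-1}/s$. I do not anticipate a real conceptual obstacle, since this is a textbook argument; the points requiring some care are tracking the disjointness of $S$ and $W$ throughout so that the extracted $K^{(r)}_{s,\ldots,s}$ has truly distinct vertices, and separating the degenerate range $rm/\binom{n}{r-1} < 2s$ (in which case $m = O(n^{r-1})$ is already negligible compared with $n^{r-\delta_{r}}$) so that Jensen produces a clean lower bound in the main range.
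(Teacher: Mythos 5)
The paper gives no proof here; Theorem~\ref{THM:Erdos-exponent} is stated with a citation to Erd\H{o}s's 1964 paper. Your argument is a correct reconstruction of Erd\H{o}s's classical proof: the reduction to a single complete $r$-partite $r$-graph $K^{(r)}_{s,\ldots,s}$ is standard, the induction on $r$ with the K\H{o}v\'ari--S\'os--Tur\'an base case is exactly the textbook route, the key observation that $\mathcal{L}_W$ is $K^{(r-1)}_{s,\ldots,s}$-free is right, and the resulting recursion $\delta_r = \delta_{r-1}/s$ recovers the familiar exponent $\delta_r = 1/s^{r-1}$. The two caveats you flag (disjointness bookkeeping, and the small-degree range where convexity of $x\mapsto\binom{x}{s}$ degenerates) are precisely the right things to be careful about and are handled correctly.
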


We say an $r$-graph $\mathcal{H}$ is \textbf{semibipartite} if there exists a bipartition $V_1 \cup V_2 = V(\mathcal{H})$ such that every edge in $\mathcal{H}$ contains exactly one vertex from $V_1$. For convenience, we write $\mathcal{H} = \mathcal{H}[V_1, V_2]$ to emphasize that $\mathcal{H}$ is semibipartite with respect to the bipartition $V_1 \cup V_2 = V(\mathcal{H})$.
Given a family $\mathcal{F}$ of $r$-graphs, we use $\mathrm{ex}(m,n,\mathcal{F})$ to denote the maximum number of edges in an $m$ by $n$ semibipartite $\mathcal{F}$-free $r$-graph. 
The function $\mathrm{ex}_p(m,n,\mathcal{F})$ is defined analogously$\colon$ for every $p\geq 1$, $\mathrm{ex}_p(m,n,\mathcal{F})$ is the maximum $p$-norm of an $m$ by $n$ semibipartite $\mathcal{F}$-free $r$-graph.
\begin{proposition}\label{PROP:r-partite-subgp-p-norm}
    Every $r$-graph $\mathcal{G}$ on $n$ vertices contains a balanced $r$-partite subgraph $\mathcal{H}$ such that 
    \begin{align*}
        \norm{\mathcal{H}}_{p}
        \ge \left(\frac{r!}{r^r} + o(1)\right)^{p}\norm{\mathcal{G}}_{p}. 
    \end{align*}
    In particular, for $r=2$ we have
    \begin{align}\label{equ:Zaran-vs-Turan}
        % Z_{p}(n,n,\mathcal{F})
        \mathrm{ex}_{p}(n,n,\mathcal{F})
        \ge \left(\frac{1}{2} + o(1)\right)^{p}\mathrm{ex}_{p}(2n,\mathcal{F}).
    \end{align}
\end{proposition}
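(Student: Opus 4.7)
The plan is to exhibit the desired subgraph by the probabilistic method: sample a uniformly random ordered balanced partition $V(\mathcal{G}) = V_1 \sqcup \cdots \sqcup V_r$ with each $|V_i| \in \{\lfloor n/r \rfloor, \lceil n/r \rceil\}$, and let $\mathcal{H} \coloneqq \mathcal{G}[V_1, \ldots, V_r]$ be the subgraph consisting of the transversal edges of $\mathcal{G}$. Every realisation of $\mathcal{H}$ is automatically a balanced $r$-partite subgraph of $\mathcal{G}$, so it suffices to lower-bound $\EE[\norm{\mathcal{H}}_p]$ and invoke the probabilistic method.

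For any edge $e \in \mathcal{G}$, a direct multinomial count (writing the number of ordered balanced partitions as a multinomial coefficient and summing over the $r!$ ways of assigning the $r$ vertices of $e$ to the $r$ parts) yields
\[
\Pr\bigl[e \subseteq \mathcal{H}\bigr]
= \frac{r! \, \binom{n-r}{|V_1|-1, \ldots, |V_r|-1}}{\binom{n}{|V_1|, \ldots, |V_r|}}
= \frac{r!}{r^r}\bigl(1 + o(1)\bigr),
\]
so by linearity of expectation $\EE[d_{\mathcal{H}}(v)] = \bigl(\tfrac{r!}{r^r} + o(1)\bigr) d_{\mathcal{G}}(v)$ for every $v \in V(\mathcal{G})$.

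Since $x \mapsto x^p$ is convex on $[0, \infty)$ for $p \ge 1$, Jensen's inequality gives $\EE[d_{\mathcal{H}}^{p}(v)] \ge (\EE[d_{\mathcal{H}}(v)])^{p}$ pointwise. Summing over $v$ and using linearity once more,
\[
\EE\bigl[\norm{\mathcal{H}}_p\bigr]
\ge \sum_{v \in V(\mathcal{G})} \bigl(\EE[d_{\mathcal{H}}(v)]\bigr)^{p}
= \Bigl(\tfrac{r!}{r^r} + o(1)\Bigr)^{p} \norm{\mathcal{G}}_p,
\]
so some outcome $\mathcal{H}$ attains the claimed lower bound. For the consequence with $r = 2$, I would apply the proposition to an $\mathcal{F}$-free graph $\mathcal{G}$ on $2n$ vertices with $\norm{\mathcal{G}}_p = \mathrm{ex}_p(2n, \mathcal{F})$; the resulting balanced bipartite subgraph $\mathcal{H}$ is a $\mathcal{F}$-free $n$ by $n$ semibipartite graph, and substituting $2!/2^2 = 1/2$ yields~\eqref{equ:Zaran-vs-Turan}.

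I do not anticipate a substantive obstacle. The only subtle point is invoking Jensen in the direction $\EE[X^p] \ge (\EE X)^p$, which is valid exactly because $p \ge 1$ makes $x^p$ convex; the rest is a routine random-partition computation whose asymptotic is insensitive to the minor rounding of the part sizes.
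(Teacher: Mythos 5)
Your proof is correct and follows essentially the same route as the paper: take a uniformly random balanced ordered $r$-partition, compute $\Pr[e \in \mathcal{H}] = \bigl(\tfrac{r!}{r^r}+o(1)\bigr)$, apply Jensen's inequality $\EE[d_{\mathcal{H}}^p(v)] \ge (\EE[d_{\mathcal{H}}(v)])^p$ (which the paper invokes under the name of the Power Mean Inequality), sum via linearity of expectation, and conclude by the probabilistic method; the derivation of~\eqref{equ:Zaran-vs-Turan} from the $r=2$ case also matches.
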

\begin{proof}[Proof of Proposition~\ref{PROP:r-partite-subgp-p-norm}]
    Choose a balanced $r$-partition $V_1 \cup \cdots \cup V_{r} = V(\mathcal{G})$ uniformly at random. More specifically,  we first fix integers $m_1, \ldots, m_{r}$ satisfying $m_r+ 1\ge m_1 \ge \cdots \ge m_r$ and $m_1 + \cdots+ m_r = n$. Then we inductively select uniformly at random an $m_i$-set $V_i$ from $V(\mathcal{G})\setminus (V_{1} \cup \cdots \cup V_{i-1})$, where $V_0 \coloneqq  \emptyset$. 
    Let $\mathbf{G} \coloneqq \mathcal{G}[V_1, \ldots, V_r]$ and $V \coloneqq V(\mathcal{G})$. 
    Similarly to the proof of Proposition~\ref{PROP:random-sample}, it follows from Fact~\ref{FACT:Holder} and the linearity of expectation that  
    \begin{align*}
        \mathbb{E}\left[\norm{\mathbf{G}}_{p}\right]
        = \sum_{v\in V} \mathbb{E}\left[d_{\mathbf{G}}^{p}(v)\right] 
        & \ge \sum_{v\in V} \left(\mathbb{E}\left[d_{\mathbf{G}}(v)\right]\right)^{p}  \\
        & = \sum_{v\in V}\left(\sum_{e\in \mathcal{G}\colon v\in e}\mathbb{P}\left[e\in \mathbf{G}\right]\right)^{p} \\
        & = \sum_{v\in V}\left(\left(\frac{r!}{r^r}+ o(1)\right) \cdot  d_{\mathcal{G}}(v)\right)^{p} 
         = \left(\frac{r!}{r^r}+ o(1)\right)^{p} \norm{\mathcal{G}}_{p}. 
    \end{align*}
    Therefore, there exists a balanced $r$-partition $V_1 \cup \cdots \cup V_{r} = V(\mathcal{G})$ such that the $r$-partite subgraph $\mathcal{H} \coloneqq \mathcal{G}[V_1, \ldots, V_r]$ satisfies $\norm{\mathcal{H}}_{p}
        \ge \left(\frac{r!}{r^r} + o(1)\right)^{p}\norm{\mathcal{G}}_{p}$. 
\end{proof}

Let $K_{s_1, \ldots, s_r}^{r}$ be the complete $r$-partite $r$-graph with parts of sizes $s_1, \ldots, s_r$, respectively.
Extending classical theorems of K{\H o}v\'{a}ri--S\'{o}s--Tur\'{a}n~\cite{KST54} and Erd\H{o}s~\cite{Erdos64}, the following upper bound for $\mathrm{ex}(m,n,K_{s_1, \ldots, s_r}^{r})$ was proved in~\cite{HHLLYZ23}.

\begin{proposition}[{\cite[Proposition~2.1]{HHLLYZ23}}]\label{PROP:hypergraph-KST-Zaran}
    Suppose that $r \ge 3$, $s_r\ge \cdots \ge s_1 \ge 1$, and $m, n\ge 1$ are integers.  
    Then 
    \begin{align*}
        \mathrm{ex}(m,n,K_{s_1, \ldots, s_r}^{r})
        \le \frac{(s_2+\cdots+s_r-r+1)^{\frac{1}{s_1}}}{r-1}mn^{r-1-\frac{1}{s_1\cdots s_{r-1}}} + (s_1-1)\binom{n}{r-1}. 
    \end{align*}    
\end{proposition}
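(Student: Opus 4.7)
The plan is to mimic the classical Kővári--Sós--Turán double counting in the semibipartite setting, carried out by induction on $r$. Let $\mathcal{H}=\mathcal{H}[A,B]$ be an $m$-by-$n$ semibipartite $K^r_{s_1,\ldots,s_r}$-free $r$-graph, write $L(a)\coloneqq\{T\in\binom{B}{r-1}:T\cup\{a\}\in\mathcal{H}\}$ and $d(T)\coloneqq|\{a\in A:T\cup\{a\}\in\mathcal{H}\}|$, so that $|\mathcal{H}|=\sum_{T}d(T)$. The base case $r=2$ is the standard KST bound: no $s_1$-subset of $A$ has more than $s_2-1$ common neighbours in $B$, so $\sum_{b\in B}\binom{d(b)}{s_1}\le(s_2-1)\binom{m}{s_1}$; combining this with the Jensen lower bound $\sum_{b}\binom{d(b)}{s_1}\ge n\binom{|\mathcal{H}|/n}{s_1}$ and $\binom{x}{s_1}\ge(x-s_1+1)^{s_1}/s_1!$ solves out to the $r=2$ instance of the proposition.

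For the inductive step, the crucial observation is that for every $S\in\binom{A}{s_1}$ the common link $L(S)\coloneqq\bigcap_{a\in S}L(a)\subseteq\binom{B}{r-1}$ is a $K^{r-1}_{s_2,\ldots,s_r}$-free $(r-1)$-graph on $B$; otherwise $S$ together with a copy of $K^{r-1}_{s_2,\ldots,s_r}$ in $L(S)$ would produce $K^r_{s_1,\ldots,s_r}\subseteq\mathcal{H}$. Double counting pairs $(S,T)\in\binom{A}{s_1}\times\binom{B}{r-1}$ with $T\in\bigcap_{a\in S}L(a)$ gives
\begin{align*}
\binom{n}{r-1}\binom{|\mathcal{H}|/\binom{n}{r-1}}{s_1}
\le\sum_{T}\binom{d(T)}{s_1}
=\sum_{S\in\binom{A}{s_1}}|L(S)|
\le\binom{m}{s_1}\cdot\mathrm{ex}\!\left(n,K^{r-1}_{s_2,\ldots,s_r}\right),
\end{align*}
the first inequality being Jensen applied to the convex function $\binom{x}{s_1}$ (essentially Fact~\ref{FACT:Holder}). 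Using $\binom{x}{s_1}\ge(x-s_1+1)^{s_1}/s_1!$ and $\binom{m}{s_1}\le m^{s_1}/s_1!$, taking $s_1$-th roots and rearranging yields
\begin{align*}
|\mathcal{H}|\le(s_1-1)\binom{n}{r-1}+m\,\binom{n}{r-1}^{1-1/s_1}\left(\mathrm{ex}(n,K^{r-1}_{s_2,\ldots,s_r})\right)^{1/s_1}.
\end{align*}
A short exponent check, using $\binom{n}{r-1}^{1-1/s_1}\le n^{(r-1)(1-1/s_1)}/((r-1)!)^{1-1/s_1}$, then produces the target power $n^{r-1-1/(s_1\cdots s_{r-1})}$.

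The main obstacle is calibrating the absolute constant $\frac{(s_2+\cdots+s_r-r+1)^{1/s_1}}{r-1}$. One needs a bound of the shape $\mathrm{ex}(n,K^{r-1}_{s_2,\ldots,s_r})\le C_{r-1}\,n^{r-1-1/(s_2\cdots s_{r-1})}$ with $C_{r-1}$ tuned so that $C_{r-1}^{1/s_1}/((r-1)!)^{1-1/s_1}$ collapses to $(s_2+\cdots+s_r-r+1)^{1/s_1}/(r-1)$. To supply such a bound on a general (non-semibipartite) $K^{r-1}_{s_2,\ldots,s_r}$-free $(r-1)$-graph from the inductive hypothesis, the standard device (in the spirit of Proposition~\ref{PROP:r-partite-subgp-p-norm}) is to pass to a balanced $(r-1)$-partite subgraph via a uniformly random $(r-1)$-colouring of the vertex set, losing the factor $(r-1)^{r-1}/(r-1)!$, and then to merge $r-2$ of the colour classes into a single right-side to obtain a semibipartite $(r-1)$-graph to which the induction applies. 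Verifying that these telescoping losses combine exactly to the displayed constant $(s_2+\cdots+s_r-r+1)^{1/s_1}/(r-1)$ is the only non-routine bookkeeping in the argument.
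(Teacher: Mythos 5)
The paper itself does not prove this proposition: it is quoted verbatim from~\cite[Prop.~2.1]{HHLLYZ23}, so there is no in-paper argument to compare against. I therefore assess your proposal on its own.

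The double-counting skeleton is sound: fixing $S\in\binom{A}{s_1}$ and observing that the common link $L(S)\subseteq\binom{B}{r-1}$ is $K^{r-1}_{s_2,\dots,s_r}$-free, then applying Jensen and $\binom{x}{s_1}\ge(x-s_1+1)^{s_1}/s_1!$, correctly reduces the claim to a bound on the (non-semibipartite) Tur\'an number $\mathrm{ex}(n,K^{r-1}_{s_2,\dots,s_r})$. Working the exponents back through the Jensen step (and specialising to $s_1=1$, where the required constant is the most stringent), one needs precisely
\begin{align*}
\mathrm{ex}\bigl(n,K^{r-1}_{s_2,\dots,s_r}\bigr)\ \le\ \frac{s_2+\cdots+s_r-(r-1)}{r-1}\,n^{\,r-1-1/(s_2\cdots s_{r-1})}.
\end{align*}
The device you propose to supply this --- pass to a balanced $(r-1)$-partite subgraph at a cost of $(r-1)^{r-1}/(r-1)!$, then merge $r-2$ classes --- does not give it. Take $r=4$, $s_1=s_2=s_3=1$, $s_4=t$; the target above becomes $\mathrm{ex}(n,K^3_{1,1,t})\le\tfrac{t-1}{3}n^2$. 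Your reduction gives: a random balanced $3$-colouring loses the factor $3^3/3!=9/2$, and applying the inductive hypothesis $\mathrm{ex}(m',n',K^3_{1,1,t})\le\tfrac{t-1}{2}m'n'$ to $m'=n/3$, $n'=2n/3$ yields $\tfrac{(t-1)n^2}{9}$; the product is $\tfrac{(t-1)n^2}{2}$, which overshoots the required $\tfrac{(t-1)n^2}{3}$ by a factor $3/2$. So the ``telescoping to the displayed constant'' you anticipate does not happen for this choice of partition device; this is a genuine gap, not merely bookkeeping.

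Two repairs are available, both changing the device rather than the skeleton. One is to split into \emph{two} parts directly: choose $U_1$ of size $n/(r-1)$ uniformly at random; then at least a $\bigl(\tfrac{r-2}{r-1}\bigr)^{r-2}$-fraction of edges meet $U_1$ in exactly one vertex, which is considerably less lossy than $(r-1)!/(r-1)^{r-1}$ and, in the example above, yields $\tfrac{(t-1)n^2}{4}\le\tfrac{(t-1)n^2}{3}$. The other, and in my view cleaner, route is to avoid the random partition entirely and run the same common-link double count on the unrestricted $(r-1)$-graph, proving by a parallel induction that $\mathrm{ex}(n,K^{q}_{t_1,\dots,t_q})\le\tfrac{t_1+\cdots+t_q-q}{q}n^{\,q-1/(t_1\cdots t_{q-1})}$ for $t_1\le\cdots\le t_q$; this supplies the needed intermediate bound with no constant leakage from a colouring step, and only then is Proposition~\ref{PROP:hypergraph-KST-Zaran} derived from it via your $\binom{A}{s_1}$-counting. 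Until one of these replacements is carried through, the argument as written is incomplete.
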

\begin{proposition}\label{PROP:hypergraph-KST-Zaran-b}
    Let $r\ge 2$ be an integer and $\mathcal{F}$ be a degenerate family of $r$-graphs. Suppose that $\mathrm{ex}(n,\mathcal{F}) = O(n^{1+\alpha})$ for some constant $\alpha$. 
    Then there exist constants $C_{\mathcal{F}}, N_0$ such that
    \begin{align*}
        \mathrm{ex}(m,n,\mathcal{F})
        \le C_{\mathcal{F}} m^{1+\alpha-(r-1)} n^{r-1}
        \quad\text{for all}\quad 
        n \ge m \ge N_0.
    \end{align*}    
\end{proposition}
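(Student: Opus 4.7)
The plan is to prove this via random sampling, reducing the semibipartite case to the ordinary Tur\'{a}n bound. Fix a semibipartite $\mathcal{F}$-free $r$-graph $\mathcal{H} = \mathcal{H}[V_1, V_2]$ with $|V_1| = m$, $|V_2| = n$, and $n \ge m$. Select a uniformly random subset $\mathbf{S} \subseteq V_2$ of size $m$, and consider the induced subgraph $\mathcal{H}' \coloneqq \mathcal{H}[V_1 \cup \mathbf{S}]$. Since $\mathcal{H}'$ is a subgraph of $\mathcal{H}$, it is also $\mathcal{F}$-free, and it has at most $2m$ vertices, so by hypothesis $|\mathcal{H}'| \le \mathrm{ex}(2m,\mathcal{F}) \le C (2m)^{1+\alpha}$ for some constant $C$, provided $m$ is at least some threshold $N_0$.

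Next I would compute $\mathbb{E}[|\mathcal{H}'|]$. Each edge $e \in \mathcal{H}$ contains exactly one vertex from $V_1$ (automatically kept) and $r-1$ vertices from $V_2$, so
\begin{align*}
    \mathbb{P}\left[e \subseteq V_1 \cup \mathbf{S}\right]
    = \frac{\binom{n-(r-1)}{m-(r-1)}}{\binom{n}{m}}
    = \frac{m(m-1)\cdots(m-r+2)}{n(n-1)\cdots(n-r+2)}
    \ge c_r \left(\frac{m}{n}\right)^{r-1}
\end{align*}
for some absolute constant $c_r > 0$ depending only on $r$ (valid once $m \ge 2(r-1)$, say). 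By linearity of expectation, $\mathbb{E}[|\mathcal{H}'|] \ge c_r (m/n)^{r-1} |\mathcal{H}|$.

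Finally, averaging guarantees some choice of $S$ with $|\mathcal{H}'| \ge \mathbb{E}[|\mathcal{H}'|]$, so
\begin{align*}
    c_r \left(\frac{m}{n}\right)^{r-1} |\mathcal{H}|
    \le C (2m)^{1+\alpha},
\end{align*}
which rearranges to $|\mathcal{H}| \le C_{\mathcal{F}} \, m^{1+\alpha-(r-1)} n^{r-1}$ with $C_{\mathcal{F}} \coloneqq C \cdot 2^{1+\alpha}/c_r$, as desired. The proof is essentially routine; the only point requiring mild care is the estimate of the binomial ratio, which forces the introduction of the threshold $N_0$ in the statement. No genuine obstacle arises, and the argument is really just a standard supersaturation-style averaging reduction from the semibipartite to the ordinary Tur\'{a}n function.
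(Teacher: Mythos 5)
Your proof is correct and follows essentially the same argument as the paper: the paper applies its Proposition~\ref{PROP:random-sample} (a $p$-norm random-sampling lemma) with $p=1$ to find an $m$-set $U\subseteq V_2$ such that $\mathcal{G}[V_1\cup U]$ retains at least a $\tfrac12(m/n)^{r-1}$ fraction of the edges, then derives the contradiction against $\mathrm{ex}(2m,\mathcal{F})\le C(2m)^{1+\alpha}$. The only cosmetic differences are that you carry out the binomial-ratio estimate explicitly rather than citing the sampling proposition, and you phrase it directly rather than by contradiction.
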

\begin{proof}[Proof of Proposition~\ref{PROP:hypergraph-KST-Zaran-b}]
    Let $C, N_0$ be constants such that $\mathrm{ex}(n,\mathcal{F}) \le C n^{1+\alpha}$ for every $n \ge N_0$. 
    Let $C_{\mathcal{F}} \coloneqq 2^{2+\alpha} C$.
    Suppose to the contrary that there exists an $\mathcal{F}$-free $m$ by $n$ semibipartite $r$-graph $\mathcal{G} = \mathcal{G}[V_1, V_2]$ with $|\mathcal{G}| > C_{\mathcal{F}} m^{1+\alpha-(r-1)} n^{r-1}$, where $n \ge m \ge N_0$. 
    Similar to the proof of Proposition~\ref{PROP:random-sample}, there exists a set $U \subseteq V_2$ of size $m$ such that the induced subgraph $\mathcal{H} \coloneqq \mathcal{G}[V_1\cup U]$ satisfies 
    \begin{align*}
        |\mathcal{H}|
        = \sum_{v\in V_1} d_{\mathcal{H}}(v) 
        & \ge (1+o(1))\left(\frac{m}{n}\right)^{r-1} \sum_{v\in V_1} d_{\mathcal{G}}(v) \\
        & \ge \frac{1}{2} \left(\frac{m}{n}\right)^{r-1} |\mathcal{G}| \\
        & > \frac{1}{2} \left(\frac{m}{n}\right)^{r-1} C_{\mathcal{F}} m^{1+\alpha-(r-1)} n^{r-1} 
         = C (2m)^{1+\alpha}
         \ge \mathrm{ex}\left(|V_1 \cup U|, \mathcal{F}\right), 
    \end{align*}
    a contradiction.     
    % Choose uniformly at random an $m$-set $\mathbf{U}$ from $V_2$. 
    % Let $\mathbf{G}$ denote the induced subgraph of $\mathcal{H}$ on $V_1 \cup \mathbf{U}$. 
    % It follows from the linearity of expectation that 
    % \begin{align*}
    %     \mathbb{E}\left[|\mathbf{G}|\right]
    %     = \sum_{v\in V_1} \mathbb{E}\left[d_{\mathbf{G}}(v)\right]
    %     & = \sum_{v\in V_1} \frac{\binom{m}{r-1}}{\binom{n}{r-1}} \cdot d_{\mathcal{H}}(v) \\
    %     & = (1+o(1))\left(\frac{m}{n}\right)^{r-1} \sum_{v\in V_1} d_{\mathcal{H}}(v) \\
    %     & \ge \frac{1}{2} \left(\frac{m}{n}\right)^{r-1} |\mathcal{H}| \\
    %     & > \frac{1}{2} \left(\frac{m}{n}\right)^{r-1} C_{\mathcal{F}} m^{1+\alpha-(r-1)} n^{r-1} 
    %      \ge C (2m)^{1+\alpha}. 
    % \end{align*}
    % %
    % Therefore, $\mathcal{H}$ contains an $m$ by $m$ subgraph with more that $C (2m)^{1+\alpha}$ edges, 
    % contradicting the assumption that $\mathrm{ex}(2m,\mathcal{F}) \le C (2m)^{1+\alpha}$.
\end{proof}
%%%%%%%%%%%%%%%%%%%%%%%%%%%%%%%%%%
\section{Regularization under the $p$-norm}\label{SEC:regular-lemma}
In this section, we prove the following extension of the classical $\Delta$-almost-Regularization Theorem by Erd{\H o}s--Simonovits (see e.g.~{\cite[Theorem~2.19]{FS13}}).
\begin{lemma}\label{LEMMA:regularization-p-norm}
    Let $r \ge 2$ be an integer. 
    Let $\alpha \in (r-2, r-1)$, $p \in \left[1, \frac{1}{r-1-\alpha}\right)$, and $C > 0$ be real numbers. 
    Then for every $\varepsilon\in (0,1)$, there exist constants $K$ and $N_0$ such that the following holds for every $n \ge N_0$. 
    Suppose $\G$ is an $r$-graph on $n$ vertices with $\norm{\G}_{p} \ge C  n^{1+p \alpha}$. 
    Then $\mathcal{G}$ contains a subgraph $\mathcal{H}$ on $m$ vertices satisfying 
    \begin{enumerate}[label=(\roman*)]
        \item\label{LEMMA:regularization-p-norm-1} $\norm{\mathcal{H}}_p \ge (1-\varepsilon)C  m^{1+p \alpha}$,
        \item\label{LEMMA:regularization-p-norm-2} $m\ge \frac{\left(C^{1/\delta} n\right)^{\frac{\delta}{1-3\delta}}}{2}$, where  $\delta \coloneqq \frac{1 - p(r-1-\alpha)}{4}$, %\frac{C ^{\frac{p}{1-\frac{3\delta}{4}}}n^{\frac{\delta}{4-3\delta}}}{2}
        \item\label{LEMMA:regularization-p-norm-3} $\Delta(\h) \le \left(\frac{K}{1-\varepsilon}\cdot \frac{\norm{\mathcal{H}}_p}{m} \right)^{1/p}$, and 
        \item\label{LEMMA:regularization-p-norm-4} $|\mathcal{H}|> \hat{C} m^{1+\alpha}$, where $\hat{C} \coloneqq
        \frac{(1-\varepsilon) C^{1/p}}{r K^{\frac{p-1}{p}}}$.
    \end{enumerate}
\end{lemma}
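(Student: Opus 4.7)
The plan is to adapt the classical $\Delta$-almost regularization theorem of Erd{\H o}s--Simonovits to the $p$-norm setting via an iterative procedure. Starting from $G_0 := G$ on $n_0 := n$ vertices, I will construct a descending sequence of induced subgraphs $G_0 \supseteq G_1 \supseteq \cdots$, each $G_i$ on $n_i$ vertices, maintaining the invariant $\norm{G_i}_p \ge C_i\, n_i^{1+p\alpha}$ with $C_i$ non-increasing and $C_i \ge (1-\varepsilon) C$ throughout. The iteration halts as soon as $\Delta(G_i) \le \left(\frac{K}{1-\varepsilon}\cdot \frac{\norm{G_i}_p}{n_i}\right)^{1/p}$, at which point I output $H := G_i$ and $m := n_i$, so that (iii) holds by construction and (i) follows from the maintained invariant.

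At step $i$, if the termination condition fails, I will apply Proposition~\ref{PROP:random-sample} to a carefully chosen ``core'' set $U_i \subseteq V(G_i)$ of high-degree vertices together with a random sample $W \subseteq V(G_i)$ of size $m_{i+1}$, producing the induced subgraph $G_{i+1} := G_i[U_i \cup W]$ on at most $n_{i+1} := |U_i| + m_{i+1}$ vertices, and satisfying
\[
\norm{G_{i+1}}_p \;\ge\; (1+o(1)) \left(\frac{m_{i+1}}{n_i}\right)^{p(r-1)} \sum_{v \in U_i} d_{G_i}^p(v).
\]
The key algebraic identity $1+p\alpha = p(r-1) + 4\delta$ (using $\delta = (1 - p(r-1-\alpha))/4 > 0$, supplied by the hypothesis $p < 1/(r-1-\alpha)$) reveals the crucial trade-off: a reduction in vertex count by a factor $\lambda := m_{i+1}/n_i < 1$ decreases the $p$-norm by only $\lambda^{p(r-1)}$, while the target $C\cdot n^{1+p\alpha}$ shrinks by $\lambda^{p(r-1)+4\delta}$, yielding a relative gain of $\lambda^{-4\delta}$. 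I will calibrate the threshold defining $U_i$ (so that $\sum_{v \in U_i} d_{G_i}^p(v)$ captures a large enough fraction of $\norm{G_i}_p$) and the sample size $m_{i+1}$ so that this gain compensates for the step's multiplicative loss, preserving the invariant with $C_{i+1}$ close to $C_i$. Solving the resulting polynomial recurrence yields the lower bound (ii), with the exponent $\delta/(1-3\delta)$ emerging naturally from the balance between $p(r-1)$ and $1+p\alpha$ through the iteration.

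Property (iv) will follow directly from (i) and (iii) via Fact~\ref{FACT:p-q-norm-upper} with $q = 1$: one has $\norm{H}_p \le \norm{H}_1 \cdot \Delta(H)^{p-1} = r|H| \cdot \Delta(H)^{p-1}$, hence $|H| \ge \norm{H}_p / (r\Delta(H)^{p-1})$. Substituting the upper bound $\Delta(H)^{p-1} \le \left(\frac{K}{1-\varepsilon}\cdot\frac{\norm{H}_p}{m}\right)^{(p-1)/p}$ from (iii) and the lower bound $\norm{H}_p \ge (1-\varepsilon) C m^{1+p\alpha}$ from (i), and simplifying using the identity $(1+p\alpha)/p + (p-1)/p = 1+\alpha$, I obtain $|H| \ge \frac{(1-\varepsilon) C^{1/p}}{r K^{(p-1)/p}} m^{1+\alpha} = \hat{C} m^{1+\alpha}$ as required.

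The main obstacle is the iterative step: for each $G_i$ violating (iii), identifying a core set $U_i$ and sample size $m_{i+1}$ so that simultaneously (a) $\sum_{v \in U_i} d_{G_i}^p(v)$ captures enough of $\norm{G_i}_p$, (b) $n_{i+1} = |U_i| + m_{i+1}$ is strictly smaller than $n_i$ by a polynomial factor, and (c) the cumulative multiplicative loss across all iterations stays below $\varepsilon$. The degree-sequence skewness forced by the failure of (iii)---namely, $\Delta(G_i)$ being much larger than the $p$-average degree---forces the $p$-norm to concentrate in relatively few high-degree vertices, making an appropriate choice of $U_i$ possible; a dyadic decomposition of the degree sequence is the natural way to pin down the right threshold. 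The hypothesis $p < 1/(r-1-\alpha)$ is used precisely to guarantee $\delta > 0$, which makes the exponent of $\lambda^{-4\delta}$ strictly positive and thereby the trade-off favorable.
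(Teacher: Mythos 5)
There is a genuine gap in your treatment of termination. You propose to halt the iteration as soon as $\Delta(G_i)\le\bigl(\tfrac{K}{1-\varepsilon}\cdot\tfrac{\norm{G_i}_p}{n_i}\bigr)^{1/p}$ and to output $H:=G_i$; and you justify the iterative step by asserting that ``the failure of (iii) \dots forces the $p$-norm to concentrate in relatively few high-degree vertices.'' That assertion is false, and with it the plan collapses. Since $1+p\alpha = p(r-1)+4\delta$ with $\delta>0$, a single vertex of near-maximum degree $\approx n_i^{r-1}$ contributes only about $n_i^{p(r-1)} = n_i^{1+p\alpha-4\delta}$ to $\norm{G_i}_p \ge C n_i^{1+p\alpha}$, i.e.\ a vanishing fraction of the total. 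So one (or a handful of) outliers can violate your stopping rule while the high-degree set $U_i$ carries negligible $p$-mass. In that regime neither alternative you allow yourself is available: the sampling step in Proposition~\ref{PROP:random-sample} cannot amplify $\Phi$ because $\sum_{v\in U_i}d^p(v)$ is too small, and you cannot halt because $\Delta(G_i)$ is still large. Your dyadic-decomposition remark does not rescue this: the bucket that carries $\ge 1/\log n$ of the mass may well be a low-degree bucket, which neither shrinks the vertex set nor lowers the maximum degree.

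The paper's mechanism is precisely designed for this case and it is the step you are missing: it defines $U_i$ by the threshold $d^p(v)\ge K\norm{\G_i}_p/|V(\G_i)|$, and the stopping rule is not on $\Delta$ but on the \emph{mass} of $U_i$. If $\sum_{v\in U_i}d^p(v)<\varepsilon_1\norm{\G_i}_p$, it stops and sets $\h := \G_i[V(\G_i)\setminus U_i]$, i.e.\ it \emph{deletes} the low-mass high-degree core. Deleting $U_k$ costs little in $p$-norm (by a Minkowski-type calculation this gives (i) with $\norm{\h}_p\ge(1-\varepsilon)\norm{\G_k}_p$) and simultaneously caps the degree of every surviving vertex by $(K\norm{\G_k}_p/|V(\G_k)|)^{1/p}\le(\tfrac{K}{1-\varepsilon}\tfrac{\norm{\h}_p}{m})^{1/p}$, which is (iii). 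Property (iii) is thus a \emph{consequence} of deletion, not the halting predicate. Related to this, your invariant ``$C_i$ non-increasing, $C_i\ge(1-\varepsilon)C$'' runs in the wrong direction: the paper needs $\Phi(\G_i)=\norm{\G_i}_p/|V(\G_i)|^{1+p\alpha}$ to \emph{increase} by a factor $K^{2\delta}$ per step, which both bounds the number of iterations and is what drives the lower bound $m\ge\tfrac{1}{2}(C^{1/\delta}n)^{\delta/(1-3\delta)}$ in (ii); a weakening invariant gives you no control over when the process stops nor over $m$. Your derivation of (iv) from (i) and (iii) via Fact~\ref{FACT:p-q-norm-upper} is correct and matches the paper.
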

% \xl{can probably be extended to generalized Turan problems as well}
\begin{proof}[Proof of Lemma~\ref{LEMMA:regularization-p-norm}]
    Let $r \ge 2$, $\alpha \in (r-2, r-1)$, $p \in \left[1, \frac{1}{r-1-\alpha}\right)$, and $C > 0$ be as assumed in Lemma~\ref{LEMMA:regularization-p-norm}. Since $p \in \left[1, \frac{1}{r-1-\alpha}\right)$, the constant $\delta = \frac{1 - p(r-1-\alpha)}{4}$ satisfies $0< \delta < 1/4$. 
    Fix $\varepsilon \in (0,1)$. 
    Let $\varepsilon_{1}$ be the real number in $(0,\varepsilon)$ such that 
    \begin{align*}
        1-\varepsilon_{1}-((r-1)\varepsilon_{1})^{1/p} 
        = (1-\varepsilon)^{1/p}.
    \end{align*}
    Let $K$ be a constant satisfying 
    \begin{align*}
        K^{\delta} \ge 2^{1+p(r-1)}
        \quad\text{and}\quad 
        \frac{K^{1+p \alpha}}{K^{p(r-1)}}\cdot \frac{\varepsilon_{1}}{2^{2+p \alpha}} =K^{4\delta} \cdot \frac{\varepsilon_{1}}{2^{2+p \alpha}} > K^{2\delta}. 
    \end{align*}
    Let $N_1$ be the constant such that Proposition~\ref{PROP:random-sample} holds with $o_{m}(1) \ge -1/2$ for all $m \ge N_1$. 
    Let $N_0 \gg N_1$ be a sufficiently large integer and  $\G$ be an $r$-graph on $n \ge N_0$ vertices with $\norm{\G}_{p} \ge C  n^{1+p \alpha}$.
    
    For convenience, for every $r$-graph $\mathcal{K}$, we define  
    \begin{align*}
        \Phi(\mathcal{K}) 
        \coloneqq \frac{\norm{\mathcal{K}}_{p}}{|V(\mathcal{K})|^{1+p\alpha}}.
    \end{align*}
    Note that $\Phi(\mathcal{G}) \ge C$.

    We will define a sequence of subgraphs $\G_0 = \mathcal{G} \supseteq \G_1 \supseteq \cdots \supseteq \G_k$ for some $k \ge 0$ such that 
    \begin{align*}
        % \frac{\norm{\G_{i+1}}_p}{|V(\G_{i+1})|^{1+p \alpha}}
        % \ge K^{\delta/2}\cdot  \frac{\norm{\G_i}_p}{|V(\G_i)| ^{1+p \alpha}}
        \Phi(\mathcal{G}_{i+1})
        \ge K^{2\delta } \cdot  \Phi(\mathcal{G}_{i})
        \ge \Phi(\mathcal{G}_{i})
        \quad\text{and}\quad 
        \left(\frac{1}{K}\right)^{i+1}n 
        \le |V(\G_{i+1})| 
        \le \left(\frac{2}{K}\right)^{i+1}n
        %\quad\text{for every}~k > i \ge 0. 
    \end{align*}
    for every $i \in [0,k-1]$.
    
    Suppose we have defined $\G_i$ for some $i\ge 0$.
     % For convenience, let
     % \begin{align*}
     %     |V(\mathcal{G}_{i})| \coloneqq |V(\G_i)|,\quad 
     %     d_i \coloneqq \left(\frac{\norm{\G_i}_p}{|V(\mathcal{G}_{i})|}\right)^{1/p}, \quad\text{and}\quad 
     %     d(v) \coloneqq d_{\G_i}(v)\quad\text{for}~v\in V(\mathcal{G}_i). 
     % \end{align*}
    %
    Let 
    \begin{align*}
        U_i 
        \coloneqq \left\{v\in V(\G_i) \colon d_{\mathcal{G}_{i}}^{p}(v) \ge \frac{K \cdot \norm{\G_i}_p}{|V(\mathcal{G}_{i})|} \right\}. 
    \end{align*}
    It follows from 
    \begin{align*}
       \norm{\G_i}_p 
       = \sum_{v\in V(\G_i)} d_{\mathcal{G}_{i}}^p(v) 
       \ge  \sum_{v\in U_i} d_{\mathcal{G}_{i}}^p(v) 
       \ge |U_i| \cdot \frac{K \cdot \norm{\G_i}_p }{|V(\mathcal{G}_{i})|}
    \end{align*}
    that 
    \begin{align*}
        |U_i| \le \frac{|V(\mathcal{G}_{i})|}{K}. 
    \end{align*}
    If $\sum_{v\in U_i} d_{\mathcal{G}_{i}}^p(v)< \varepsilon_{1} \norm{\G_i}_p$ or $|V(\mathcal{G}_{i})| \le N_1$, then we stop the process and set $k \coloneqq i$. 
    Otherwise, we apply Proposition~\ref{PROP:random-sample} to $\mathcal{G}_{i}$ with $U$ and $m$ in the proposition corresponding to $U_{i}$ and $|V(\mathcal{G}_{i})|/K$ here. Let $V_{i+1} \subseteq V(\mathcal{G}_{i})$ be the $\frac{|V(\mathcal{G}_{i})|}{K}$-set returned by Proposition~\ref{PROP:random-sample}, and let $\mathcal{G}_{i+1} \coloneqq \mathcal{G}_{i}[U_{i} \cup V_{i+1}]$.
    By Proposition~\ref{PROP:random-sample}, we have  
    \begin{align*}
        \norm{\mathcal{G}_{i+1}}_{p}
        & \ge \sum_{v\in U_{i}}d_{\mathcal{G}_{i+1}}^{p}(v)  \\
        & \ge (1+o(1)) \left(\frac{|V(\mathcal{G}_{i})|/K}{|V(\mathcal{G}_{i})|}\right)^{p(r-1)} \sum_{v\in U_{i}}d_{\mathcal{G}_{i}}^{p}(v) 
         \ge \frac{\sum_{v\in U_{i}}d_{\mathcal{G}_{i}}^{p}(v)}{2K^{p(r-1)}}
        \ge \frac{\varepsilon_1  \norm{\mathcal{G}_{i}}_{p}}{2K^{p(r-1)}}.
    \end{align*}
    It follows that 
    \begin{align}\label{equ:regularization-Psi-increase}
        \Phi(\mathcal{G}_{i+1})
        = \frac{\norm{\G_{i+1}}_p}{|V(\mathcal{G}_{i+1})|^{1+p\alpha}} 
        & \ge \frac{\varepsilon_1  \norm{\mathcal{G}_{i}}_{p}}{2K^{p(r-1)}}/\left(\frac{2|V(\mathcal{G}_{i})|}{K}\right)^{1+p\alpha} \notag \\
        & = \frac{\varepsilon_1}{2K^{p(r-1)}} \cdot \frac{K^{1+p\alpha}}{2^{1+p\alpha}} \cdot \frac{\norm{\mathcal{G}_{i}}_{p}}{|V(\mathcal{G}_{i})|^{1+p\alpha}}
        % \frac{K^{1+p \alpha}}{K^{p(r-1)}}\cdot \frac{\varepsilon_{1}}{2^{1+p \alpha}}\cdot \frac{\norm{\G_i}_p}{|V(\mathcal{G}_{i})| ^{1+p \alpha}} 
        > K^{2\delta }\cdot \Phi(\mathcal{G}_{i}).
    \end{align}
    Additionally, it follows from the inductive hypothesis that 
    \begin{align}
        |V(\mathcal{G}_{i+1})| 
        & = |U_i \cup V_{i+1}|
        \ge |V_{i+1}|
        \ge \frac{|V(\mathcal{G}_{i})|}{K}
        \ge \left(\frac{1}{K}\right)^{i+1}n, 
        \quad\text{and}\quad \label{equ:regularization-V-Gi-1}\\
        |V(\mathcal{G}_{i+1})|
        & = |U_i \cup V_{i+1}|
        \le  |U_i| + |V_{i+1}| 
        \le \frac{|V(\mathcal{G}_{i})|}{K} + \frac{|V(\mathcal{G}_{i})|}{K}
        \le \left(\frac{2}{K}\right)^{i+1}n, \label{equ:regularization-V-Gi-2}
    \end{align}
    as desired. 

    We claim that the process defined above stops after at most $k_{\ast} \coloneqq \log_{K}(n/N_1)$ steps.  %$|V(\mathcal{G}_{k_{\ast}})| \le \left(\frac{2}{K}\right)^{k_{\ast}}n < N_0$, a contradiction. 
    %by~\eqref{equ:regularization-Psi-increase}, 
    Indeed, suppose this is not true. Then at the $k_{\ast}$-step, by~\eqref{equ:regularization-Psi-increase}, we would have 
    \begin{align}\label{equ:regularization-k-ast}
        \frac{\norm{\mathcal{G}_{k_{\ast}}}}{|V(\mathcal{G}_{k_{\ast}})|^{1+p\alpha}}
        = \Phi(\mathcal{G}_{k_{\ast}})
        \ge \left(K^{2\delta}\right)^{k_{\ast}} \cdot \Phi(\mathcal{G}_0)
        \ge C K^{2\delta k_{\ast}}. 
    \end{align}
    It is trivially true that 
    \begin{align*}
        \frac{\norm{\mathcal{G}_{k_{\ast}}}}{|V(\mathcal{G}_{k_{\ast}})|^{1+p\alpha}}
        = \frac{\sum_{v\in V(\mathcal{G}_{k_{\ast}})}d_{\mathcal{G}_{k_{\ast}}}^{p}(v)}{|V(\mathcal{G}_{k_{\ast}})|^{1+p\alpha}}
        & \le \frac{|V(\mathcal{G}_{k\ast})| \cdot |V(\mathcal{G}_{k\ast})|^{p(r-1)}}{|V(\mathcal{G}_{k_{\ast}})|^{1+p\alpha}} \\
        & = |V(\mathcal{G}_{k_{\ast}})|^{p(r-1-\alpha)}
        \le |V(\mathcal{G}_{k_{\ast}})|.
    \end{align*}
    Combining this with~\eqref{equ:regularization-V-Gi-2} and~\eqref{equ:regularization-k-ast}, we obtain 
    \begin{align*}
        C K^{2\delta k_{\ast}}
        \le |V(\mathcal{G}_{k_{\ast}})|
        \le \left(\frac{2}{K}\right)^{k_{\ast}} n.
    \end{align*}
    It follows that 
    \begin{align*}
        n 
        \ge C K^{2\delta k_{\ast}} \left(\frac{K}{2}\right)^{k_{\ast}}
        = C \left(\frac{K^{2\delta}}{2}\right)^{k_{\ast}} K^{k_{\ast}}
        \ge C K^{\delta k_{\ast}} K^{k_{\ast}}
        = C \left(\frac{n}{N_{1}}\right)^{1+\delta},
    \end{align*}
    which is a contradiction since $C, \delta, N_1 > 0$ are fixed and $n$ is sufficiently large. 
    Therefore, the process defined above stops after at most $k_{\ast} \coloneqq \log_{K}(n/N_1)$ steps. 

    Recalling that $k$ is the final step of the process, and using ~\eqref{equ:regularization-V-Gi-1}, we have 
    \begin{align*}
        |V(\mathcal{G}_{k})|
        \ge \left(\frac{1}{K}\right)^{k} n
        \ge \left(\frac{1}{K}\right)^{k_{\ast}} n
        \ge N_1. 
    \end{align*}
    This means that the process stopped due to 
    % Therefore, from the definition of process above, we have
    \begin{align}\label{equ:Regularization-Uk-sum}
              \sum_{v\in U_{k}}d_{\mathcal{G}_{k}}^{p}(v) < \varepsilon_1 \norm{\mathcal{G}_{k}}_{p}. 
    \end{align}
    Let $\h$ denote the induced subgraph of $\mathcal{G}_{k}$ on $W \coloneqq V(\mathcal{G}_{k})\setminus U_k$ and let $m \coloneqq |W|$. Recall that  
    \begin{align*}
        U_k 
        \coloneqq \left\{v\in V(\mathcal{G}_{k}) \colon d_{\mathcal{G}_{k}}^{p}(v) \ge \frac{K \cdot \norm{\mathcal{G}_{k}}_p}{|V(\mathcal{G}_{k})|} \right\}
    \end{align*}
    We will show that $\mathcal{H}$ satisfies the assertions in Lemma~\ref{LEMMA:regularization-p-norm}.

    Let $\mathcal{R} \coloneqq \mathcal{G}_{k}\setminus \mathcal{H}$. 
    Note that every edge in $\mathcal{R}$ contains at least one vertex from $U_{k}$. 
    Therefore,
    \begin{align*}
        \sum_{v\in W}d_{\mathcal{R}}(v)
        \le (r-1)\cdot |\mathcal{R}|
        \le (r-1) \cdot \sum_{v\in U_{k}}d_{\mathcal{R}}(v). 
    \end{align*}
    Since $\mathcal{R} \subseteq \mathcal{G}_{k}$, it follows from the definition of $U_{k}$ and~\eqref{equ:Regularization-Uk-sum} that 
    \begin{align*}
        \sum_{v\in W}d_{\mathcal{R}}^{p}(v)
        \le \sum_{v\in W}d_{\mathcal{R}}(v) \cdot d_{\mathcal{G}_{k}}^{p-1}(v)
        & \le \sum_{v\in W}d_{\mathcal{R}}(v) \cdot \left(\frac{K \cdot \norm{\mathcal{G}_{k}}_p}{|V(\mathcal{G}_{k})|}\right)^{\frac{p-1}{p}} \\
        & \le (r-1) \cdot \sum_{v\in U_{k}}d_{\mathcal{R}}(v) \cdot \left(\frac{K \cdot \norm{\mathcal{G}_{k}}_p}{|V(\mathcal{G}_{k})|}\right)^{\frac{p-1}{p}} \\ 
        & \le (r-1) \cdot \sum_{v\in U_{k}}d_{\mathcal{G}_{k}}(v) \cdot d_{\mathcal{G}_{k}}^{p-1}(v) \\
        & = (r-1) \cdot \sum_{v\in U_{k}}d_{\mathcal{G}_{k}}^{p}(v)
        < (r-1)\varepsilon_1 \norm{\mathcal{G}_{k}}_{p}.
    \end{align*}
    If, for the sake of contradiction, it holds that $\sum_{v\in W}d^p_{\mathcal{H}}(v) = \norm{\mathcal{H}}_p < (1-\varepsilon)\norm{\G_k}_p$, then it follows from the inequality above that
    \begin{align*}
        \sum_{v\in W}d_{\mathcal{G}_{k}}^p(v)
        & = \sum_{v\in W}\left(d_{\mathcal{H}}(v) + d_{\mathcal{R}}(v)\right)^p  \\
        & \le \left(\left(\sum_{v\in W}d^p_{\mathcal{H}}(v)\right)^{1/p} + \left(\sum_{v\in W} d_{\mathcal{R}}^{p}(v)\right)^{1/p}\right)^p \\
        & \le \left(\norm{\mathcal{H}}_{p}^{1/p} + \left((r-1)\varepsilon_1 \norm{\mathcal{G}_{k}}_{p}\right)^{1/p}\right)^p  \\
        & < \left(\left((1-\varepsilon)\norm{\G_k}_p\right)^{1/p} + \left((r-1)\varepsilon_1 \norm{\mathcal{G}_{k}}_{p}\right)^{1/p}\right)^p \\
        & = \left((1-\varepsilon)^{1/p} + (r-1)^{1/p}\varepsilon_1^{1/p}\right)^{p} \norm{\mathcal{G}_{k}}_{p} 
         = (1-\varepsilon_{1})^{p} \norm{\mathcal{G}_{k}}_{p}, 
    \end{align*}
    where the first inequality follows from  Fact~\ref{FACT:Minkowski} and
    the last equality follows from the definition of $\varepsilon_1$. 
    Combining this with~\eqref{equ:Regularization-Uk-sum}, we obtain 
    \begin{align*}
        \norm{\mathcal{G}_{k}}_p
         =\sum_{v\in U_k}d_{\mathcal{G}_{k}}^p(v) + \sum_{v\in W}d_{\mathcal{G}_{k}}^p(v) \notag 
         < \varepsilon_1 \norm{\mathcal{G}_{k}}_{p} + (1-\varepsilon_{1})^{p} \norm{\mathcal{G}_{k}}_{p}
        \leq \norm{\mathcal{G}_{k}}_{p},
    \end{align*}
    a contradiction. 
    Therefore, we have 
    \begin{align}\label{equ:Regulization-Hp-Gk}
        \norm{\mathcal{H}}_p \ge (1-\varepsilon)\norm{\G_k}_p, 
    \end{align}
    which implies that 
    \begin{align*}
        \Phi(\mathcal{H})
        = \frac{\norm{\mathcal{H}}_p}{m^{1+p\alpha}}
        \ge \frac{(1-\varepsilon)\norm{\G_k}_p}{|V(\mathcal{G}_{k})|^{1+p\alpha}} 
        \ge (1-\varepsilon) \frac{\norm{\G_0}_p}{|V(\mathcal{G}_{0})|^{1+p\alpha}} 
        \ge (1-\varepsilon) C .
    \end{align*}
    Here, we used the fact that 
    \begin{align*}
        \Phi(\mathcal{G}_{k}) 
        \ge K^{2\delta } \cdot \Phi(\mathcal{G}_{k-1}) 
        \ge \cdots \ge K^{2k\delta} \cdot \Phi(\mathcal{G}_{0}) 
        \ge \Phi(\mathcal{G}_{0}) 
        \ge C.
    \end{align*}
    This completes the proof of Lemma~\ref{LEMMA:regularization-p-norm}~\ref{LEMMA:regularization-p-norm-1}.

    Next, we prove Lemma~\ref{LEMMA:regularization-p-norm}~\ref{LEMMA:regularization-p-norm-2}. 
    Note that by $W = V(\mathcal{G}_k)\setminus|U_k|$ and $|U_k| \le \frac{|V(\mathcal{G}_k)|}{K}$, we have $|W| \ge |V(\mathcal{G}_k)|- \frac{|V(\mathcal{G}_k)|}{K}$.
    Recall the following results that we have established$\colon$ 
    \begin{claim}\label{CLAIM:regularization-k}
        We have 
        \begin{enumerate}[label=(\roman*)]
            \item\label{CLAIM:regularization-k-1} $|V(\mathcal{G}_k)|- \frac{|V(\mathcal{G}_k)|}{K} \le |W| = m \le |V(\mathcal{G}_{k})|$.
            \item\label{CLAIM:regularization-k-2} $\left(\frac{1}{K}\right)^{k}n 
            \le |V(\mathcal{G}_k)| 
            \le \left(\frac{2}{K}\right)^{k}n$.
            \item\label{CLAIM:regularization-k-3} $K^{2k\delta}C  \le \frac{\norm{\mathcal{G}_{k}}_{p}}{|V(\mathcal{G}_k)|^{1+p\alpha}}  \le  \frac{|V(\mathcal{G}_k)|^{1+p(r-1)}}{|V(\mathcal{G}_k)|^{1+p\alpha}} = |V(\mathcal{G}_k)|^{p(r-1-\alpha)}
            = |V(\mathcal{G}_k)|^{1-4\delta}$.
        \end{enumerate}
    \end{claim}
    It follows from Claim~\ref{CLAIM:regularization-k}~\ref{CLAIM:regularization-k-2} and~\ref{CLAIM:regularization-k-3} that 
    \begin{align*}
        K^{2k\delta}C  
        \le|V(\mathcal{G}_k)|^{1-4\delta }
        \le \left(\left(\frac{2}{K}\right)^{k}n\right)^{1-4\delta }. 
    \end{align*}
    Since $K^{\delta} \ge 2^{1+p(r-1)} \ge 2^{p(r-1-\alpha)} = 2^{1-4\delta}$, the inequality above implies that 
    \begin{align*}
        n^{1-4\delta }
        \ge \frac{K^{2k\delta } C  K^{k(1-4\delta )}}{2^{k(1-4\delta )}} 
        \ge \frac{K^{2k\delta} C  K^{k(1-4\delta )}}{K^{k\delta}}
        = K^{k\left(1-3\delta\right)} C. 
    \end{align*}
    It follows that $K^k 
        \le n^{\frac{1-4\delta }{1-3\delta}} C^{-\frac{1}{1-3\delta}}$. 
    % \begin{align*}
    %     K^k 
    %     \le n^{\frac{1-4\delta }{1-3\delta}} C^{-\frac{1}{1-3\delta}}.
    % \end{align*}
    %
    Therefore, 
    \begin{align*}
        m 
        \ge |V(\mathcal{G}_k)|- \frac{|V(\mathcal{G}_k)|}{K}
        \ge \frac{|V(\mathcal{G}_k)|}{2}
        \ge \frac{1}{2} \left(\frac{1}{K}\right)^{k}n
        % \ge \frac{1}{2} C^{\frac{p}{1-\frac{3\delta}{4}}} n^{1-\frac{1-4\delta }{1-\frac{3\delta}{4}}}
        \ge \frac{C^{\frac{1}{1-3\delta}} n^{\frac{\delta}{1-3\delta}}}{2},
    \end{align*}
    proving Lemma~\ref{LEMMA:regularization-p-norm}~\ref{LEMMA:regularization-p-norm-2}. 
    
    It follows from the definition of $U_k$ and  $\norm{\mathcal{H}}_p \ge (1-\varepsilon)\norm{\G_k}_p$ (by~\eqref{equ:Regulization-Hp-Gk}) that 
    \begin{align}\label{equ:reg-Delta}
        \Delta(\h) 
        &<\left(\frac{K \cdot \norm{\G_k}_p}{|V(\mathcal{G}_{k})|}\right)^{1/p}
        \le \left(\frac{K \cdot \norm{\mathcal{H}}_p}{(1-\varepsilon) |V(\mathcal{G}_{k})|}\right)^{1/p}
        \le \left(\frac{K \cdot \norm{\mathcal{H}}_p}{(1-\varepsilon) m}\right)^{1/p}. 
    \end{align}
    This proves Lemma~\ref{LEMMA:regularization-p-norm}~\ref{LEMMA:regularization-p-norm-3}.

    Finally, it follows from 
    \begin{align*}
        \norm{\mathcal{H}}_p
        = \sum_{v\in W} d_{\mathcal{H}}^{p}(v)
        \le \sum_{v\in W} d_{\mathcal{H}}(v) \cdot \left(\Delta(\mathcal{H})\right)^{p-1}
        = r \cdot |\mathcal{H}| \cdot \left(\Delta(\mathcal{H})\right)^{p-1}
    \end{align*}
    and~\eqref{equ:reg-Delta} that 
    \begin{align*}
        |\mathcal{H}|
        \ge \frac{\norm{\mathcal{H}}_p}{r \left(\Delta(\mathcal{H})\right)^{p-1}}
        > \frac{\norm{\mathcal{H}}_p}{r \left(\frac{K \cdot \norm{\mathcal{H}}_p}{(1-\varepsilon) m}\right)^{\frac{p-1}{p}}}
        = \frac{1}{r}\left(\frac{(1-\varepsilon)m}{K}\right)^{\frac{p-1}{p}}  \norm{\mathcal{H}}_p^{1/p}.
    \end{align*}
    Combining this with $\norm{\mathcal{H}}_p \ge (1-\varepsilon)\norm{\G_k}_{p} \ge (1-\varepsilon) C  m^{1+p \alpha}$ (by~\eqref{equ:Regulization-Hp-Gk} and Claim~\ref{CLAIM:regularization-k}~\ref{CLAIM:regularization-k-3}), we obtain 
    \begin{align*}
        |\mathcal{H}|
        >  \frac{1}{r}\left(\frac{(1-\varepsilon)m}{K}\right)^{\frac{p-1}{p}}  \left((1-\varepsilon)C  m^{1+p \alpha}\right)^{1/p}
        =  \frac{(1-\varepsilon) C^{1/p} m^{1+\alpha}}{r K^{\frac{p-1}{p}}},   
    \end{align*}
    which proves Lemma~\ref{LEMMA:regularization-p-norm}~\ref{LEMMA:regularization-p-norm-4}.
\end{proof}

%%%%%%%%%%%%%%%%%%%%%%%%%%%%%%%%%%%
\section{Proof of Theorem~\ref{THM:main-r-graph}}\label{SEC:proof-r-gp}
In this section, we prove Theorem~\ref{THM:main-r-graph}. 
This will be achieved through the following two propositions, first addressing the case $p < \frac{1}{r-1-\alpha}$.
\begin{proposition}\label{PROP:r-graph-small-p}
    Let $r \ge 2$ be an integer. 
    Suppose that $\mathcal{F}$ is a degenerate family of $r$-graphs satisfying $\mathrm{ex}(n, \mathcal{F}) = O(n^{1+\alpha})$ for some constant $\alpha$. 
    Then for every $p\in \left(1, \frac{1}{r-1-\alpha} \right)$, there exists a constant $C_{\mathcal{F}}$ such that for all sufficiently large $n$, 
    \begin{align*}
        \mathrm{ex}_p(n, \mathcal{F}) 
        \le C_{\mathcal{F}} \cdot n^{1+p\alpha}. 
        % \quad\text{for every}\quad 
        % p\in \left(1, \frac{1}{r-1-\alpha} \right). 
    \end{align*}
    In particular, if $\mathrm{ex}(n, \mathcal{F}) = \Theta(n^{1+\alpha})$, then together with \eqref{equ:Turan-number-p-lower-bound}, 
    \begin{align*}
        \mathrm{ex}_p(n, \mathcal{F}) = \Theta(n^{1+p\alpha})
        \quad\text{for every}\quad 
        p\in \left(1, \frac{1}{r-1-\alpha} \right).
    \end{align*}
\end{proposition}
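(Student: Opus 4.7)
The plan is to derive Proposition~\ref{PROP:r-graph-small-p} directly from the Regularization Lemma (Lemma~\ref{LEMMA:regularization-p-norm}) by a contradiction argument. Note first that the hypothesis $p \in (1,\tfrac{1}{r-1-\alpha})$ forces $\alpha > r-2$, so we are in the regime where Lemma~\ref{LEMMA:regularization-p-norm} applies; the edge-count bound $\mathrm{ex}(n,\mathcal{F}) \le C' n^{1+\alpha}$ holds for all large $n$ with some absolute constant $C'=C'(\mathcal{F})$.

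Suppose for contradiction that for every constant $C$, there are arbitrarily large $n$ and an $n$-vertex $\mathcal{F}$-free $r$-graph $\mathcal{G}$ with $\norm{\mathcal{G}}_p \ge C\,n^{1+p\alpha}$. We apply Lemma~\ref{LEMMA:regularization-p-norm} with, say, $\varepsilon = 1/2$, yielding a subgraph $\mathcal{H}$ on $m$ vertices that is still $\mathcal{F}$-free and satisfies
\begin{align*}
    m \;\ge\; \tfrac{1}{2}\bigl(C^{1/\delta}n\bigr)^{\delta/(1-3\delta)}
    \qquad\text{and}\qquad
    |\mathcal{H}| \;>\; \hat{C}\, m^{1+\alpha},
\end{align*}
where $\delta = \tfrac{1-p(r-1-\alpha)}{4} > 0$ is fixed, $K = K(\varepsilon)$ is the constant returned by the lemma (independent of $C$), and
\begin{align*}
    \hat{C} \;=\; \frac{(1-\varepsilon)\, C^{1/p}}{r\, K^{(p-1)/p}}.
\end{align*}
The first displayed inequality guarantees that $m \to \infty$ as $n \to \infty$, so for all sufficiently large $n$ we may invoke the hypothesis $\mathrm{ex}(m,\mathcal{F}) \le C' m^{1+\alpha}$.

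The key point is the dependence of $\hat{C}$ on $C$: since $p > 1$ we have $\hat{C} = \Theta(C^{1/p}) \to \infty$ as $C \to \infty$, while $C'$ and $K$ are fixed. Choose $C_{\mathcal{F}}$ so large that the resulting $\hat{C}$ exceeds $C'$. Then
\begin{align*}
    C'\, m^{1+\alpha} \;\ge\; \mathrm{ex}(m,\mathcal{F}) \;\ge\; |\mathcal{H}| \;>\; \hat{C}\, m^{1+\alpha} \;>\; C'\, m^{1+\alpha},
\end{align*}
a contradiction. Therefore $\mathrm{ex}_p(n,\mathcal{F}) \le C_{\mathcal{F}}\, n^{1+p\alpha}$ for all sufficiently large $n$, which is the claimed bound. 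The matching lower bound follows from \eqref{equ:Turan-number-p-lower-bound} whenever $\mathrm{ex}(n,\mathcal{F}) = \Theta(n^{1+\alpha})$.

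I expect no serious obstacles beyond bookkeeping, since Lemma~\ref{LEMMA:regularization-p-norm} already does the heavy lifting of converting a $p$-norm assumption into an edge-count assumption on an almost-regular subgraph. The only subtlety is making sure that $C_{\mathcal{F}}$ is chosen \emph{after} $K = K(\varepsilon)$ is fixed (so that $\hat{C}$ really does grow with $C_{\mathcal{F}}$), and verifying that $m$ is large enough for the hypothesis $\mathrm{ex}(m,\mathcal{F}) \le C' m^{1+\alpha}$ to be applicable—both of which are handled automatically by conclusions \ref{LEMMA:regularization-p-norm-2} and \ref{LEMMA:regularization-p-norm-4} of the regularization lemma.
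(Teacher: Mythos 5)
Your proof is correct and follows essentially the same route as the paper: apply Lemma~\ref{LEMMA:regularization-p-norm} with $\varepsilon=1/2$, use conclusion~\ref{LEMMA:regularization-p-norm-4} to produce a subgraph $\mathcal{H}$ with $|\mathcal{H}| > \hat{C}m^{1+\alpha}$ on $m\ge N_0$ vertices, and note that $\hat{C}\sim C_{\mathcal{F}}^{1/p}$ with $K$ fixed, so a sufficiently large $C_{\mathcal{F}}$ forces a contradiction with $\mathrm{ex}(m,\mathcal{F})=O(m^{1+\alpha})$. The paper simply makes the choice explicit, taking $C_{\mathcal{F}} = 2^{p}r^{p}K^{p-1}C^{p}$ so that $\hat{C}=C$; your observation that $C_{\mathcal{F}}$ must be chosen after $K$ is exactly the point.
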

\begin{proof}[Proof of Proposition~\ref{PROP:r-graph-small-p}]
    Let $C, N_0>0$ be constants such that $\mathrm{ex}(n, \mathcal{F}) \le C n^{1+\alpha}$ for every $n \ge N_0$. 
    Let $\delta \coloneqq \frac{1-(r-1-\alpha)p}{4} \in \left(0,\frac{1}{4}\right)$.
    Fix $\varepsilon \coloneqq \frac{1}{2}$ and let $K = K(r,\alpha, p, \varepsilon)$ be the constant returned by Lemma~\ref{LEMMA:regularization-p-norm}. 
    Let $C_{\mathcal{F}} \coloneqq 2^p r^p K^{p-1} C^p$ and $N_1 \coloneqq {(2N_0)^{\frac{1-3\delta}{\delta}}}/{C_{\mathcal{F}}^{1/\delta}}$. 
    
    Suppose to the contrary that there exists an $\mathcal{F}$-free $r$-graph $\mathcal{G}$ on $n \ge N_1$ vertices with $\norm{\mathcal{G}}_{p} > C_{\mathcal{F}}\cdot n^{1+p\alpha}$. 
    Then, by Lemma~\ref{LEMMA:regularization-p-norm}, there exists a subgraph $\mathcal{H} \subseteq \mathcal{G}$ on $m \ge {\left(C_{\mathcal{F}}^{1/\delta} n\right)^{\frac{\delta}{1-3\delta}}}/{2} \ge N_0$ vertices with $|\mathcal{H}| > {(1-\varepsilon)C_{\mathcal{F}}^{1/p}m^{1+\alpha}}/{\left(r K^{\frac{p-1}{p}}\right)}  = C m^{1+\alpha}$, contradicting the $\mathcal{F}$-freeness of $\mathcal{H} \subseteq \mathcal{G}$. 
\end{proof}

Next, we consider the case $p > \frac{1}{r-1-\alpha}$.
\begin{proposition}\label{PROP:r-graph-large-p}
    Let $r \ge 2$ be an integer. 
    Suppose that  $\mathcal{F}$ is a degenerate family of $r$-graphs satisfying $\mathrm{ex}(n, \mathcal{F}) = O(n^{1+\alpha})$ for some constant $\alpha \in (r-2,r-1)$. 
    Then for every $p > \frac{1}{r-1-\alpha}$, 
    \begin{align*}
        \mathrm{ex}_p(n, \mathcal{F}) 
        \le \left(\tau_{\mathrm{part}}(\mathcal{F})-1+o(1)\right)\binom{n}{r-1}^p.
        % \quad\text{for every}\quad
        % p > \frac{1}{r-1-\alpha}. 
    \end{align*}
\end{proposition}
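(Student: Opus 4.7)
My plan is to bound $\norm{\mathcal{G}}_{p}$ by analyzing the degree sequence of an $\mathcal{F}$-free $r$-graph $\mathcal{G}$ on $n$ vertices and separating the contribution of near-maximum-degree vertices from the rest. Fix an $r$-partite $F \in \mathcal{F}$ with $\tau_{\mathrm{part}}(F) = t := \tau_{\mathrm{part}}(\mathcal{F})$ and $r$-partition $V(F) = S_1 \sqcup \cdots \sqcup S_r$ with $|S_1| = t$. For each threshold $c \ge 0$, set $f(c) := |\{v \in V(\mathcal{G}) : d_{\mathcal{G}}(v) \ge c\}|$ so that $\norm{\mathcal{G}}_{p} = \int_0^{\Delta(\mathcal{G})} p\, c^{p-1} f(c)\, dc$; the proof reduces to bounding $f(c)$ in three regimes of $c$.

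The first step is a random-embedding argument: for some small $\eta = \eta(F) > 0$ (say $\eta < 1/|E(F)|$), one has $f\bigl((1-\eta)\binom{n-1}{r-1}\bigr) \le t-1$. Indeed, if $t$ vertices $u_1, \ldots, u_t$ each satisfied $d_{\mathcal{G}}(u_i) \ge (1-\eta)\binom{n-1}{r-1}$, one could fix a bijection $\phi \colon S_1 \to \{u_1,\ldots, u_t\}$ and sample a uniformly random injection $\psi \colon V(F) \setminus S_1 \to V(\mathcal{G}) \setminus \{u_1,\ldots,u_t\}$; for each edge $e = \{s_1,\ldots, s_r\}$ of $F$ with $s_i \in S_i$, the event that $\{\psi(s_2),\ldots,\psi(s_r)\}$ falls outside $L_{\mathcal{G}}(\phi(s_1))$ has probability at most $\eta + o(1)$, so a union bound over the $|E(F)|$ edges produces a positive-probability $F$-embedding, contradicting $\mathcal{F}$-freeness. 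Consequently, the top $t-1$ vertices contribute at most $(t-1)\binom{n-1}{r-1}^p = (t-1+o(1))\binom{n}{r-1}^p$, the desired main term.

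For the contribution below this cutoff, I would invoke Proposition~\ref{PROP:hypergraph-KST-Zaran-b} applied to $V_1 := \{v : d_{\mathcal{G}}(v) \ge c\}$ and $V_2 := V(\mathcal{G}) \setminus V_1$: after subtracting the $O\bigl(|V_1|^2\,n^{r-2}\bigr)$ contribution of edges containing at least two vertices of $V_1$, the semibipartite inequality gives $f(c) \le C'\bigl(n^{r-1}/c\bigr)^{1/(r-1-\alpha)}$ whenever $|V_1|$ lies in the permissible range. Splitting the integral at $c_0 \asymp n^\alpha$ and $c_1 = (1-\eta)\binom{n-1}{r-1}$, the low range (where $f \le n$) contributes $O(n^{1+p\alpha}) = o\bigl(\binom{n}{r-1}^p\bigr)$ exactly because $p > 1/(r-1-\alpha)$, and the upper range contributes the main term.

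The main obstacle is the middle interval $[c_0, c_1]$: inserting the semibipartite bound directly yields a contribution of order $\Theta\bigl(\binom{n}{r-1}^p\bigr)$ with a constant depending on $C_{\mathcal{F}}, p$ and $\alpha$, rather than the required $o\bigl(\binom{n}{r-1}^p\bigr)$. My intended remedy is to reduce to the base case at an exponent $p^{\#}$ slightly above the threshold $1/(r-1-\alpha)$, where the middle range is narrowest and a dependent-random-choice refinement of Step~1 can push $f(c) \le t-1$ down to a much lower threshold $c^{\#} = o\bigl(\binom{n-1}{r-1}\bigr)$, and then lift the bound to every $p > p^{\#}$ via Fact~\ref{FACT:p-q-norm-upper}: $\norm{\mathcal{G}}_p \le \norm{\mathcal{G}}_{p^{\#}} \cdot \Delta(\mathcal{G})^{p-p^{\#}} \le (t-1+o(1))\binom{n}{r-1}^{p^{\#}}\binom{n-1}{r-1}^{p-p^{\#}} = (t-1+o(1))\binom{n}{r-1}^p$. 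Thus the crux is the tight analysis at the base exponent, which requires delicate coordination of the semibipartite estimate with the random-embedding / DRC step.
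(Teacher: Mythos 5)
The paper's proof (Proposition~\ref{PROP:r-graph-large-p}) does \emph{not} go through an integral decomposition of $\norm{\mathcal{G}}_p$, so your proposal is a genuinely different route, but it has a gap at exactly the step you flag. Your Step~1 (random embedding to show $f\bigl((1-\eta)\binom{n-1}{r-1}\bigr)\le t-1$) correctly recovers the main term, and your lifting step via Fact~\ref{FACT:p-q-norm-upper} from a base exponent $p^{\#}$ to all $p>p^{\#}$ is sound. However, the proposed fix for the middle range --- ``a dependent-random-choice refinement of Step~1 can push $f(c)\le t-1$ down to a much lower threshold $c^{\#}=o\bigl(\binom{n-1}{r-1}\bigr)$'' --- asserts something that is \emph{false} in general. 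For $r=2$, $\mathcal{F}=\{C_4\}$ (so $t=\tau_{\mathrm{ind}}(C_4)=2$), the Erd\H{o}s--R\'enyi polarity graph is $C_4$-free and nearly regular of degree $\Theta(\sqrt{n})=o(n)$, so $f(c^{\#})=n\gg t-1$ for any $c^{\#}=o(n)$. More generally, almost-regular near-extremal constructions for degenerate Tur\'an problems have all $n$ vertices of degree $\Theta(n^{\alpha})=o(n^{r-1})$, so no DRC refinement can give a $t-1$ pointwise cap on $f$ at a threshold $o(\binom{n-1}{r-1})$. Since you acknowledge that inserting the crude semibipartite bound $f(c)\le C'(n^{r-1}/c)^{1/(r-1-\alpha)}$ into the middle integral already yields $\Theta\bigl(\binom{n}{r-1}^p\bigr)$ with the wrong constant, and the proposed repair does not work, the proof as sketched is incomplete.

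The paper avoids this entirely with a different decomposition. It takes $U=\{v:d_{\mathcal{H}}(v)\ge n^{r-1-\delta_1}\}$ for a small $\delta_1>0$, and instead of bounding $f(c)$ pointwise or insisting $|U|\le t-1$, it proves two things: (a) $|U|\le n^{\delta_2}$ via a random-sampling (Proposition~\ref{PROP:random-sample}) plus Tur\'an-number argument; (b) the semibipartite $K^r_{s_1,\ldots,s_r}$-free graph between $U$ and $V\setminus U$ has at most $(s_1-1+o(1))\binom{n}{r-1}$ edges, using the \emph{explicit} form of Proposition~\ref{PROP:hypergraph-KST-Zaran} whose second term is $(s_1-1)\binom{n}{r-1}$ and whose first term is $O(|U|\,n^{r-1-1/(s_1\cdots s_{r-1})})=o(n^{r-1})$ once $|U|\le n^{\delta_2}$. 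This gives $\sum_{v\in U}d_{\mathcal{H}}(v)\le(s_1-1+o(1))\binom{n}{r-1}$, hence $\sum_{v\in U}d_{\mathcal{H}}^p(v)\le\sum_{v\in U}d_{\mathcal{H}}(v)\cdot\binom{n-1}{r-1}^{p-1}\le(s_1-1+o(1))\binom{n}{r-1}^p$, which is a bound on the \emph{total} contribution of the high-degree vertices rather than on how many there are. The remaining graph $\mathcal{H}[V\setminus U]$ has $\Delta\le n^{r-1-\delta_1}$, so its $p$-norm can be converted (Fact~\ref{FACT:p-q-norm-upper}) to a $\hat{p}$-norm with $1\le\hat{p}<p_{\ast}$ and contradicted by the sub-threshold Proposition~\ref{PROP:r-graph-small-p}. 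The structural lesson you would need to incorporate is precisely this: use the sharp \emph{semibipartite} Kővári--S\'os--Tur\'an bound for the specific complete $r$-partite witness $K^r_{s_1,\ldots,s_r}$ (not the weaker generic bound $\mathrm{ex}(m,n,\mathcal{F})=O(m^{1+\alpha-(r-1)}n^{r-1})$) to control the \emph{sum} of the high degrees with the tight leading constant $(s_1-1)$, rather than trying to show there are only $t-1$ of them.
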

\begin{proof}[Proof of Proposition~\ref{PROP:r-graph-large-p}]
    Let $F\in \mathcal{F}$ be an $r$-partite $r$-graph satisfying $\tau_{\mathrm{part}}(F) =\tau_{\mathrm{part}}(\mathcal{F})$.
    Let $A_1 \cup \cdots \cup A_r = V(F)$ be an $r$-partition of $F$ with $|A_1| \le \cdots \le |A_{r}|$ and $|A_1| = \tau_{\mathrm{part}}(F)$. 
    Let $s_i \coloneqq |A_i|$ for $i\in [r]$. 
    Note that $s_1 = |A_1| = \tau_{\mathrm{part}}(F)=\tau_{\mathrm{part}}(\mathcal{F})$. 
    Let the $(r-1)$-partite $(r-1)$-graph $F_1$ on $A_2 \cup \cdots \cup A_r$ be defined as 
    \begin{align*}
        F_1
        \coloneqq \bigcup_{v\in A_1}L_{F}(v).
    \end{align*}
    By Theorem~\ref{THM:Erdos-exponent}, there exist constants $\delta>0$ and $C>0$ such that $\mathrm{ex}(n, F_1) \le C n^{r-1-\delta }$ for every $n\in \mathbb{N}$. By reducing $\delta$, we may assume that $\delta \le \min\left\{\frac{1}{s_1\cdots s_{r-1}},~\frac{1}{2} \right\}$. 
    Let $p_{\ast} \coloneqq \frac{1}{r-1-\alpha}$.
    Let $\delta_1  >0$ be a sufficiently small constant such that, in particular,  
    \begin{align*}
        \delta_1  
        <\min\left\{ p - p_{\ast},~\frac{2+\alpha-r}{p-p_{\ast}} \right\}
        \quad\text{and}\quad
        \delta_2 
        \coloneqq 
        \delta_1  + \frac{\delta_1 }{r-1-\alpha}
        \le \min\left\{\frac{\delta}{s_1},~ \frac{p-1}{p}\right\}.
    \end{align*}
    Fix an arbitrary small constant $\varepsilon > 0$.
    Let $n$ be sufficiently large. 
    Suppose to the contrary that there exists an $n$-vertex $\mathcal{F}$-free $r$-graph $\mathcal{H}$ with 
    \begin{align*}
        \norm{\mathcal{H}}_{p}
        \ge \left(\tau_{\mathrm{part}}(\mathcal{F})-1+\varepsilon\right)\binom{n}{r-1}^p
        =\left(s_1-1+\varepsilon\right)\binom{n}{r-1}^p.
    \end{align*}
    Let 
    \begin{align*}
        V\coloneqq V(\mathcal{H}),
        \quad
        U 
        \coloneqq \left\{v\in V \colon d_{\mathcal{H}}(v) \ge n^{r-1-\delta_1 }\right\},
        \quad 
        V_1 
        \coloneqq V\setminus U, 
        \quad\text{and}\quad 
        \mathcal{H}_{1}  \coloneqq \mathcal{H}[V_{1}].
    \end{align*}
    \begin{claim}\label{CLAIM:r-gp-p-large-U-upper}
        We have $|U| \le n^{\delta_2 }$.
    \end{claim}
    \begin{proof}[Proof of Claim~\ref{CLAIM:r-gp-p-large-U-upper}]
        Suppose to the contrary that this is not true. 
        Let $U' \subseteq U$ be a set of size $n^{\delta_2 }$. 
        By Proposition~\ref{PROP:random-sample}, there exists a set $V'\subseteq V$ of size $n^{\delta_2}$ such that the induced subgraph $\mathcal{H}[U'\cup V']$ satisfies 
        \begin{align*}
            |\mathcal{H}[U'\cup V']|
            \ge \frac{1}{r}\sum_{v\in U'}d_{\mathcal{H}[U'\cup V']}(v)
            & \ge \frac{1}{r}\left(1+o(1)\right)\left(\frac{n^{\delta_2}}{n}\right)^{r-1}\sum_{v\in U'}d_{\mathcal{H}}(v) \\
            & \ge \frac{1}{2r} n^{-(r-1)(1-\delta_2)} \cdot n^{\delta_2} \cdot n^{r-1-\delta_1} 
            = \frac{n^{r\delta_2 - \delta_1}}{2r}.
        \end{align*}
        Since $r\delta_2 - \delta_1 - \delta_2 (1+\alpha) = \delta_2 (r-1-\alpha) -\delta_1 = (r-1-\alpha) \delta_1$, we have 
        \begin{align*}
            |\mathcal{H}[U'\cup V']| 
            \ge \frac{n^{r\delta_2 - \delta_1}}{2r} 
            = \frac{n^{(r-1-\alpha)\delta_1}}{2^{2+\alpha} r} \cdot \left(2n^{\delta_2}\right)^{1+\alpha}
            > \mathrm{ex}(2n^{\delta_2}, \mathcal{F})
            \ge \mathrm{ex}\left(|U'\cup V'|, \mathcal{F}\right),
        \end{align*}
        a contradiction. 
    \end{proof}
    \begin{claim}\label{CLAIM:r-gp-p-large-tilde-H}
        We have 
        \begin{align}
            \sum_{v\in U}d_{\mathcal{H}}(v) 
            & \le \left(s_1-1+\frac{2\varepsilon}{3}\right)\binom{n}{r-1}, \quad\text{and} \label{equ:r-gp-p-large-U} \\
            \norm{\mathcal{H}_{1} }_p 
            & \ge \left(\left(\frac{\varepsilon}{3}\right)^{1/p} - \left(\frac{\varepsilon}{4}\right)^{1/p}\right)^{p} \binom{n}{r-1}^p. \label{equ:r-gp-p-large-tilde-H}
        \end{align}
    \end{claim}
    \begin{proof}[Proof of Claim~\ref{CLAIM:r-gp-p-large-tilde-H}]
        Let $\mathcal{S}$ be the collection of edges in $\mathcal{H}$ that contain exactly one vertex from $U$. 
        Note that $\mathcal{S} = \mathcal{S}[U, V_1]$ is a semibipartite $r$-graph. 
        Since $F\subseteq K_{s_1, \ldots, s_r}^{r}$ and  $\mathcal{S}$ is $F$-free, it follows from Proposition~\ref{PROP:hypergraph-KST-Zaran} and Claim~\ref{CLAIM:r-gp-p-large-U-upper} that 
        \begin{align*}
            |\mathcal{S}|
            & \le \frac{(s_2 + \cdots + s_{r} - r+1)^{\frac{1}{s_1}}}{r-1} |U| n^{r-1-\frac{1}{s_1 \cdots s_{r-1}}} + (s_1-1)\binom{n}{r-1} \\
            & \le \frac{(s_2 + \cdots + s_{r} - r+1)^{\frac{1}{s_1}}}{r-1} n^{r-1-\frac{1}{s_1 \cdots s_{r-1}} + \delta_2 } + (s_1-1)\binom{n}{r-1} \\
            & \le \frac{\varepsilon}{2} \binom{n}{r-1} + (s_1-1)\binom{n}{r-1}. 
        \end{align*} 
        Let $\mathcal{S}_2$ denote the set of edges in $\mathcal{H}$ that contain at least two vertices from $U$. 
        It is clear that 
        \begin{align*}
            |\mathcal{S}_2|
            \le |U|^2 \binom{n}{r-2}
            \le n^{2\delta_2 } \binom{n}{r-2}
            \le \frac{\varepsilon}{6r}\binom{n}{r-1}.
        \end{align*}
        Therefore,  
        \begin{align}\label{equ:r-gp-large-H-U}
            \sum_{v\in U}d_{\mathcal{H}}(v)
            \le |\mathcal{S}| + r\cdot |\mathcal{S}_2| 
            & \le \frac{\varepsilon}{2} \binom{n}{r-1} + (s_1-1)\binom{n}{r-1} + r\cdot  \frac{\varepsilon}{6r}\binom{n}{r-1} \notag \\
            & = \left(s_1-1+\frac{2\varepsilon}{3}\right) \binom{n}{r-1}.
        \end{align}
        This proves~\eqref{equ:r-gp-p-large-U}. 

        Next, we prove~\eqref{equ:r-gp-p-large-tilde-H}.
        First, note that for every $v\in V_{1}$, we have 
        \begin{align*}
            d_{\mathcal{H}}(v) - d_{\mathcal{H}_{1} }(v) \le |U| \binom{n}{r-2} \le n^{r-2+\delta_2}.
        \end{align*}
        Therefore, by the assumption that $\delta_2 < \frac{p-1}{p}$, we have 
        \begin{align*}
            \sum_{v\in V_{1}} \left(d_{\mathcal{H}}(v) - d_{\mathcal{H}_{1} }(v)\right)^{p}
            \le |V_{1}| \cdot n^{p(r-2+\delta_2 )}
            \le n^{p(r-2+\delta_2 ) + 1}
            \le \frac{\varepsilon}{4} \binom{n}{r-1}^{p}.
        \end{align*}
        Consequently, it follows from Fact~\ref{FACT:Minkowski} that 
        \begin{align*}
            \left(\sum_{v\in V_{1}}d_{\mathcal{H}}^{p}(v)\right)^{1/p}
            & = \left(\sum_{v\in V_{1}} \left(d_{\mathcal{H}_{1} }(v) + d_{\mathcal{H}}(v) - d_{\mathcal{H}_{1} }(v)\right)^p \right)^{1/p} \\
            & \le \left( \sum_{v\in V_{1}}d_{\mathcal{H}_{1} }^{p}(v) \right)^{1/p} 
            + \left( \sum_{v\in V_{1}} \left(  d_{\mathcal{H}}(v) - d_{\mathcal{H}_{1} }(v)\right)^{p}  \right)^{1/p} \\
            & \le \norm{\mathcal{H}_{1} }_{p}^{1/p} 
                + \left(\frac{\varepsilon}{4}\right)^{1/p} \binom{n}{r-1}.
        \end{align*}
        Suppose to the contrary that $\norm{\mathcal{H}_{1} }_{p} < \left(\left(\frac{\varepsilon}{3}\right)^{1/p} - \left(\frac{\varepsilon}{4}\right)^{1/p}\right)^{p} \binom{n}{r-1}^p$. Then it follows from~\eqref{equ:r-gp-large-H-U} and the inequality above that 
        \begin{align*}
            \norm{\mathcal{H}}_{p}
            & = \sum_{v\in U}d_{\mathcal{H}}^{p}(v) 
                + \sum_{v\in V_{1}}d_{\mathcal{H}}^{p}(v) \\
            & < \sum_{v\in U}d_{\mathcal{H}}(v) \cdot \binom{n}{r-1}^{p-1} 
                + \left(\left(\left(\frac{\varepsilon}{3}\right)^{1/p} - \left(\frac{\varepsilon}{4}\right)^{1/p}\right) \binom{n}{r-1}
                + \left(\frac{\varepsilon}{4}\right)^{1/p} \binom{n}{r-1}\right)^{p} \\
            & \le \left(s_1-1+\frac{2\varepsilon}{3}\right) \binom{n}{r-1}^{p} 
                + \frac{\varepsilon}{3} \binom{n}{r-1}^p
            = \left(s_1-1+\varepsilon\right) \binom{n}{r-1}^{p}, 
        \end{align*}
        a contradiction.  
        This proves~\eqref{equ:r-gp-p-large-tilde-H}.
    \end{proof}

    Let $\hat{p} \coloneqq \frac{1-(p-p_{\ast})\delta_1 }{r-1-\alpha} < p_{\ast} < p$. 
    Since $\alpha > r-2$ and $\delta_1 \le \frac{2+\alpha-r}{p-p_{\ast}}$, we have $\hat{p} \ge 1$. 
    It follows from Fact~\ref{FACT:p-q-norm-upper} and \eqref{equ:r-gp-p-large-tilde-H} that there exists a constant $\varepsilon_1>0$ satisfying
    \begin{align*}
        \norm{\mathcal{H}_{1} }_{\hat{p}}
        \ge \frac{\norm{\mathcal{H}_{1} }_{p}}{\left(\Delta(\mathcal{H}_{1} )\right)^{p-\hat{p}}}
        \ge \frac{\norm{\mathcal{H}_{1} }_{p}}{\left(n^{r-1-\delta_1 }\right)^{p-\hat{p}}}
        \ge 
        \frac{\varepsilon_1 n^{p(r-1)}}{\left(n^{r-1-\delta_1 }\right)^{p-\hat{p}}}
        & = \varepsilon_1 n^{\hat{p}(r-1-\alpha) + (p-\hat{p})\delta_1  + \hat{p}\alpha} \\
        & = \varepsilon_1 n^{1-(p-p_{\ast})\delta_1  + (p-\hat{p})\delta_1  + \hat{p}\alpha} \\
        & = \varepsilon_1 n^{1+ \hat{p}\alpha + (p_{\ast}-\hat{p})\delta_1 }. 
    \end{align*}
    Since $(p_{\ast}-\hat{p})\delta_1  > 0$ and $n$ is sufficiently large, we have $\norm{\mathcal{H}_{1} }_{\hat{p}} \gg n^{1+\hat{p}\alpha}$, which, by Proposition~\ref{PROP:r-graph-small-p}, implies that  $\norm{\mathcal{H}_{1}}_{\hat{p}} > \mathrm{ex}_{\hat{p}}(n,\mathcal{F})$, a contradiction. 
    This completes the proof of Proposition~\ref{PROP:r-graph-large-p}.
\end{proof}

%%%%%%%%%%%%%%%%%%%%%%%%%%%%%%%%%%%%%%%%%%%%%%%%%%%%%
\section{Proof of Theorem~\ref{THM:r-graph-critical-point}}\label{SEC:proof-r-gh-critical}
%%%%%
We present the proof of Theorem~\ref{THM:r-graph-critical-point} in this section. 
The following result will be useful for the proof. 
\begin{proposition}\label{PROP:divide-log-bound}
    Let $r\ge 2$ be an integer and $p \ge 1$ be a real number. 
    Suppose that $\mathcal{G} = \mathcal{G}[V_1, \ldots, V_{r}]$ is an $r$-partite $r$-graph with $\min\{|V_i| \colon i \in [r]\} \ge 2$. 
    Then there exists a nonempty set $U \subseteq V_1$ such that 
    \begin{align*}
        |\mathcal{G}[U, V_2, \ldots, V_r]|
        \ge \frac{|U|^{1-\frac{1}{p}}}{2}  \left(\frac{\sum_{v\in V_1}d_{\mathcal{G}}^{p}(v)}{\lceil \log \left(|V_2| \cdots |V_{r}|\right) \rceil} \right)^{1/p} 
        \ge \frac{|U|^{1-\frac{1}{p}}}{4}  \left(\frac{\sum_{v\in V_1}d_{\mathcal{G}}^{p}(v)}{ \log \left(|V_2| \cdots |V_{r}|\right) } \right)^{1/p}.
    \end{align*}
\end{proposition}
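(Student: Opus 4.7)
The plan is a standard dyadic pigeonhole applied to the degree sequence of $V_1$. Write $D \coloneqq |V_2|\cdots|V_r|$, which is an upper bound on $d_{\mathcal{G}}(v)$ for every $v \in V_1$, and set $L \coloneqq \lceil \log D \rceil$. I would partition the positive-degree vertices of $V_1$ by the dyadic scale of their degrees: for each $i \in [L]$ define
\[
    U_i \coloneqq \left\{v \in V_1 : 2^{i-1} \le d_{\mathcal{G}}(v) < 2^{i}\right\}.
\]
Since $d_{\mathcal{G}}^{p}(v) < 2^{ip}$ for $v \in U_i$, summing in blocks gives $\sum_{v \in V_1} d_{\mathcal{G}}^{p}(v) < \sum_{i=1}^{L}|U_i|\, 2^{ip}$, so by pigeonhole there exists $i^* \in [L]$ with
\[
    |U_{i^*}|\, 2^{i^* p} \ge \frac{1}{L}\sum_{v \in V_1} d_{\mathcal{G}}^{p}(v).
\]

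Take $U \coloneqq U_{i^*}$ (if the sum on the right is zero the proposition is trivial, so assume it is positive; then $U$ is nonempty). Every $v \in U$ satisfies $d_{\mathcal{G}}(v) \ge 2^{i^*-1}$, hence
\[
    |\mathcal{G}[U, V_2, \ldots, V_r]| = \sum_{v \in U}d_{\mathcal{G}}(v) \ge \frac{|U|\, 2^{i^*}}{2}.
\]
From the pigeonhole estimate I extract $2^{i^*} \ge \bigl(L^{-1}|U|^{-1} \sum_{v \in V_1} d_{\mathcal{G}}^{p}(v)\bigr)^{1/p}$, and substituting this back recovers exactly the claimed first inequality
\[
    |\mathcal{G}[U, V_2, \ldots, V_r]| \ge \frac{|U|^{1-1/p}}{2}\left(\frac{\sum_{v \in V_1} d_{\mathcal{G}}^{p}(v)}{L}\right)^{1/p}.
\]

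The second inequality in the statement follows from $\lceil \log D \rceil \le 2 \log D$, which holds whenever $D \ge 2$; the hypothesis $\max_i |V_i| \ge 2$ handles this in the interesting case (in the degenerate case where $|V_2|=\cdots=|V_r|=1$, every $v \in V_1$ has degree $0$ or $1$ and the claim is immediate for $U = \{v : d_{\mathcal{G}}(v) = 1\}$). There is no genuine obstacle here: the argument is purely computational, and the only points requiring mild care are choosing the half-open dyadic intervals so that the factor $|U|^{1-1/p}$ and the $1/p$ power on the sum come out correctly after substitution, and bookkeeping the boundary/degenerate cases.
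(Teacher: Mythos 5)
Your proof is correct and takes essentially the same route as the paper's: dyadic decomposition of $V_1$ by degree scale, pigeonhole to find a dense dyadic bucket, and then the algebraic substitution to produce the $|U|^{1-1/p}$ factor. The only cosmetic difference is that the paper pigeonholes directly on $\sum_{v\in U_i}d^p_{\mathcal G}(v)$ and then bounds this above by $|U_i|2^{pi}$, whereas you first pass to $|U_i|2^{pi}$ and then pigeonhole; the two orderings yield the identical inequality $|U_{i^*}|2^{i^*p}\ge L^{-1}\sum_{v\in V_1}d^p_{\mathcal G}(v)$.
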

\begin{proof}[Proof of Proposition~\ref{PROP:divide-log-bound}]
    Let $N \coloneqq |V_2| \cdots |V_{r}|$ and $t \coloneqq \lceil \log N \rceil$. 
    For each $i \in [t]$, let 
    \begin{align*}
        U_i 
        \coloneqq \left\{v\in V_1 \colon d_{\mathcal{G}}(v) \in [2^{i-1}, 2^{i})\right\}. 
    \end{align*}
    Since $\sum_{v\in V_1}d_{\mathcal{G}}^{p}(v) = \sum_{i\in [t]}\sum_{v\in U_i}d_{\mathcal{G}}^{p}(v)$, 
    by the Pigeonhole Principle, there exists $i_{\ast} \in [t]$ such that 
    \begin{align*}
        \sum_{v\in U_{i_{\ast}}}d_{\mathcal{G}}^{p}(v)
        \ge \frac{\sum_{v\in V_1}d_{\mathcal{G}}^{p}(v)}{t}.
    \end{align*}
    Let $U \coloneqq U_{i^{\ast}}$ and $m \coloneqq |U|$. 
    It follows from the definition of $U_{i_{\ast}}$ that 
    \begin{align*}
        m \cdot 2^{p i_{\ast}}
        = |U|\cdot 2^{p i_{\ast}}
        \ge \sum_{v\in U_{i_{\ast}}}d_{\mathcal{G}}^{p}(v)
        \ge \frac{\sum_{v\in V_1}d_{\mathcal{G}}^{p}(v)}{t}, 
    \end{align*}
    which implies that 
    \begin{align*}
        2^{i_{\ast}-1}
        \ge \frac{1}{2} \left(\frac{\sum_{v\in V_1}d_{\mathcal{G}}^{p}(v)}{m \cdot t} \right)^{1/p}
    \end{align*}
    Therefore, 
    \begin{align*}
        |\mathcal{G}[U, V_2, \ldots, V_r]|
        = \sum_{v\in U}d_{\mathcal{G}}(v)
        \ge m \cdot 2^{i_{\ast}-1} 
        \ge \frac{1}{2} \left(\frac{\sum_{v\in V_1}d_{\mathcal{G}}^{p}(v)}{t} \right)^{1/p} m^{1-\frac{1}{p}}.
    \end{align*}
    This proves Proposition~\ref{PROP:divide-log-bound}. 
\end{proof}

We are now ready to prove Theorem~\ref{THM:r-graph-critical-point}. 
\begin{proof}[Proof of Theorem~\ref{THM:r-graph-critical-point}]
    Recall that $p_{\ast} \coloneqq \frac{1}{r-1-\alpha}$. Let $n$ be sufficiently large. 
    Suppose that $\mathcal{G}$ is an $\mathcal{F}$-free $r$-graph on $n$ vertices. 
    By Proposition~\ref{PROP:r-partite-subgp-p-norm}, there exists a balanced $r$-partition $V_1 \cup \cdots \cup V_{r} = V(\mathcal{G})$ such that the $r$-partite subgraph $\mathcal{H} \coloneqq \mathcal{G}[V_1, \ldots, V_r]$ satisfies
    \begin{align*}
        \norm{\mathcal{H}}_{p_{\ast}}
        \ge \left(\frac{r!}{r^r} + o(1)\right)^{p_{\ast}}\norm{\mathcal{G}}_{p_{\ast}}
        \ge \frac{1}{2}\left(\frac{r!}{r^r}\right)^{p_{\ast}}\norm{\mathcal{G}}_{p_{\ast}}.
    \end{align*}
    Since $\norm{\mathcal{H}}_{p_{\ast}} = \sum_{i\in [r]}\sum_{v\in V_i}d_{\mathcal{H}}^{p_{\ast}}(v)$, by the Pigeonhole Principle, there exists $V_i$ such that 
    \begin{align*}
        \sum_{v\in V_i}d_{\mathcal{H}}^{p_{\ast}}(v)
        \ge \frac{\norm{\mathcal{H}}_{p_{\ast}}}{r}
        \ge \frac{1}{2r}\left(\frac{r!}{r^r}\right)^{p_{\ast}}\norm{\mathcal{G}}_{p_{\ast}}.
    \end{align*}
    By symmetry, we may assume that $i = 1$. 
    
    Applying Proposition~\ref{PROP:divide-log-bound} to $\mathcal{H}$, we obtain a nonempty set $U \subseteq V_1$ of size $m$ for some $m \le |V_1|$ such that 
    \begin{align*}
        |\mathcal{H}[U, V_2, \ldots, V_r]|
        & \ge \frac{|U|^{1-\frac{1}{p_{\ast}}}}{4}  \left(\frac{\sum_{v\in V_1}d_{\mathcal{H}}^{p_{\ast}}(v)}{\log \left(|V_2| \cdots |V_{r}|\right)} \right)^{1/p_{\ast}}  \\
        & \ge \frac{m^{1-\frac{1}{p_{\ast}}}}{4}  \left( \frac{1}{2r}\left(\frac{r!}{r^r}\right)^{p_{\ast}} \frac{\norm{\mathcal{G}}_{p_{\ast}}}{r \cdot \log n} \right)^{1/p_{\ast}} \\
        & = \frac{m^{1+\alpha - (r-1)}}{4}  \left( \frac{1}{2r}\left(\frac{r!}{r^r}\right)^{p_{\ast}} \frac{\norm{\mathcal{G}}_{p_{\ast}}}{r \cdot \log n} \right)^{1/p_{\ast}}.
    \end{align*}
    Since $\mathrm{ex}(n,\mathcal{F}) = O(n^{1+\alpha})$, it follows from Proposition~\ref{PROP:hypergraph-KST-Zaran-b} that $|\mathcal{H}[U, V_2, \ldots, V_r]| \le  C_{\mathcal{F}} m^{1+\alpha -(r-1)} n^{r-1}$ for some constant $ C_{\mathcal{F}} > 0$. 
    Therefore, 
    \begin{align*}
        \frac{m^{1+\alpha - (r-1)}}{4}  \left( \frac{1}{2r}\left(\frac{r!}{r^r}\right)^{p_{\ast}} \frac{\norm{\mathcal{G}}_{p_{\ast}}}{r \cdot \log n} \right)^{1/p_{\ast}}
        \le C_{\mathcal{F}} m^{1+\alpha -(r-1)} n^{r-1}, 
    \end{align*}
    which implies that 
    \begin{align*}
        \norm{\mathcal{G}}_{p_{\ast}}
        \le  C_{\mathcal{F}}^{p_{\ast}}  \cdot 4^{p_{\ast}} \cdot 2r \cdot \left(\frac{r^r}{r!}\right)^{p_{\ast}} r \log n \cdot n^{p_{\ast}(r-1)}
        = C_{\mathcal{F}}^{p_{\ast}} 2^{2p^{\ast}+1} r \left(\frac{r^r}{r!}\right)^{p_{\ast}} n^{p_{\ast}(r-1)} \log n.
    \end{align*}
    This proves Theorem~\ref{THM:r-graph-critical-point}. 
\end{proof}

%%%%%%%%%%%%%%%%%%%%%%%%%%%%%%%%%%%%%%%%%%
\section{Proof of Theorem~\ref{THM:graph-critical-point}}\label{SEC:proof-graph-critical}
In this section, we prove Theorem~\ref{THM:graph-critical-point}. 
For convenience, for every integer $\ell \ge 3$, let $C_{\le 2\ell} \coloneqq \{C_4, C_6, \ldots, C_{2\ell}\}$. 
The following two theorems will be useful for us. 
\begin{theorem}[Lam--Verstra{\"e}te~\cite{LV05girth}]\label{THM:girth-upper}
    Let $\ell \ge 3$ be an integer. 
    For every $n \in \mathbb{N}$, 
    \begin{align*}
        \mathrm{ex}(n,C_{\le 2\ell}) 
        \le \frac{1}{2} n^{1+\frac{1}{\ell}} + 2^{\ell^2} n
        = \left(\frac{1}{2} + o(1)\right)n^{1+\frac{1}{\ell}}.
    \end{align*}
\end{theorem}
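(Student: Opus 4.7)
The plan is to combine a Moore-type lower bound on the order of dense $C_{\le 2\ell}$-free graphs with an iterative minimum-degree vertex deletion, which is the classical strategy for extremal problems of this flavour.

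For the Moore-type bound, I would take a $C_{\le 2\ell}$-free graph $H$ with minimum degree $\delta$, fix a vertex $v$, and run breadth-first search from $v$, letting $V_i$ denote the set of vertices at distance exactly $i$. The aim is to show that $|V_{i+1}| \ge (\delta - 1)\,|V_i|$, up to an additive correction, for $0 \le i < \ell$, which yields $v(H) \ge (\delta-1)^\ell$. The key observation driving this growth is that any two internally disjoint BFS-shortest $v$-paths to a common endpoint at depth $i \le \ell$ would merge into an even cycle of length $2i \le 2\ell$, contradicting the hypothesis. Then, starting from $G$ on $n$ vertices with $e$ edges, I would iteratively delete a vertex of minimum degree in the current subgraph; by the Moore-type bound, when the subgraph has $m$ vertices, this minimum degree is at most $m^{1/\ell} + O(1)$, so summing the deleted degrees and applying $\sum_{m=1}^{n} m^{1/\ell} \le \frac{\ell}{\ell+1}\, n^{1+1/\ell} + O(n)$ gives the correct exponent. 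Sharpening the leading constant from $\ell/(\ell+1)$ to $1/2$ requires a refined Moore argument that counts not only vertices at depth $\ell$ but also paths of length $\ell$ emanating from each vertex, and applies convexity over the degree sequence; the $2^{\ell^2} n$ additive error in the statement absorbs the crude estimates used in this refinement.

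The main technical obstacle is controlling the Moore-type bound in the presence of allowed odd cycles. For graphs of full girth $> 2\ell$, the BFS tree has no collisions of any kind up to depth $\ell$ and the multiplicative growth of $|V_i|$ is immediate. Here, however, odd cycles (starting with triangles) are permitted, and these can cause BFS-tree collisions that would shrink $|V_i|$ in the naive argument: an edge inside $V_i$, or an extra back-edge from $V_i$ to $V_{i-1}$, closes a walk whose parity might or might not be forbidden. The correct proof must track path parities carefully and distinguish the collisions producing even cycles of length $\le 2\ell$ (forbidden) from those producing only odd cycles (allowed). Carrying out this parity analysis cleanly while simultaneously maintaining the tight leading constant $1/2$ is the delicate part of the Lam--Verstra{\"e}te refinement over the classical Bondy--Simonovits-style argument.
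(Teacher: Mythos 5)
This statement is cited from Lam--Verstra\"{e}te and the paper does not prove it; it is invoked as an external result. Your task is therefore to assess your sketch on its own merits.

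Your outline follows the Bondy--Simonovits line (BFS Moore-type bound plus minimum-degree deletion), but it has a genuine gap at exactly the point you flag and then defer. The ``key observation'' --- two \emph{internally disjoint} $v$-to-$u$ paths of length $i\le\ell$ close a $C_{2i}$ --- is correct, but BFS does not hand you internally disjoint paths. When two shortest $v$-paths to $u$ share a prefix, the symmetric difference is a cycle whose length and parity you do not control: it can be odd (allowed) and shorter than $2i$, and it can also be even but longer than $2\ell$ once the shared prefix is subtracted off. Moreover, edges inside a BFS level $V_i$ and extra back-edges to $V_{i-1}$ need not create any forbidden even cycle at all (they can close only odd cycles), so the inequality $|V_{i+1}|\ge(\delta-1)|V_i|$ simply fails in general for $C_{\le 2\ell}$-free graphs; saving it requires a structural lemma (e.g.\ controlling theta-subgraphs or bounding the number of $v$-$u$ paths of each length up to $\ell$), which you name as ``delicate'' but do not supply. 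The second gap is the leading constant: your crude Moore bound and deletion sum give at best $\tfrac{\ell}{\ell+1}n^{1+1/\ell}$, and ``a refined Moore argument plus convexity'' is a placeholder, not a derivation of $\tfrac12$. For reference, the published Lam--Verstra\"{e}te argument is not a BFS Moore bound at all: it is a path-counting argument that bounds the number of length-$\ell$ paths between a fixed pair of endpoints in a $C_{\le 2\ell}$-free graph (this is where a term of the form $2^{\ell^2}$ naturally arises), compares that against a convexity lower bound on the total number of length-$\ell$ walks, and extracts the edge bound; this is much closer in spirit to the walk-counting machinery the paper itself deploys in the proof of Theorem~\ref{THM:graph-critical-point}~\ref{THM:graph-critical-point-1}. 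So your proposed route is different from theirs, and as written it does not close.
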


\begin{theorem}[Naor--Verstra\"{e}te~\cite{Naor05}]\label{THM:NV05-C2k-Zaran}
    Let $\ell \ge 2$ be an integer. 
    Then 
    \begin{align*}
        \mathrm{ex}(m,n,C_{\le 2\ell})
        \le 
        \begin{cases}
            4\left((nm)^{\frac{1}{2}+ \frac{1}{2\ell}} +n+m\right), & \quad\text{if $\ell$ is odd},  \\
           4\left((nm)^{\frac{1}{2}}m^{\frac{1}{\ell}}+n+m\right), & \quad\text{if $\ell$ is even}.
        \end{cases}
    \end{align*}
    In particular, for every $\ell \ge 2$ and for every $n \ge m \ge 1$, 
            \begin{align*}
                \mathrm{ex}(m,n,C_{\le 2\ell})
                \le 4\left((nm)^{\frac{1}{2}+ \frac{1}{2\ell}} +n+m\right),  
            \end{align*}
    and if $m \le n^{\frac{\ell-1}{\ell+1}}$, then $\mathrm{ex}(m,n,C_{\le 2\ell}) \le 4(n+n+m) \le 12 n$.
\end{theorem}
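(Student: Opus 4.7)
The plan is to prove this Moore-type Zarankiewicz bound by a breadth-first-search expansion argument in the spirit of Naor and Verstra\"ete. Fix $\ell \ge 2$ and let $G$ be a $C_{\le 2\ell}$-free bipartite graph with parts $A$ of size $m$ and $B$ of size $n$, and set $E \coloneqq |E(G)|$. I would first extract a subgraph with balanced minimum degree on both sides, and then read off the desired inequality from the tree-growth recursion of a BFS from a single vertex.

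For the reduction step, I would apply the standard iterative low-degree pruning: repeatedly delete any vertex of $A$ of current degree less than $E/(4m)$ and any vertex of $B$ of current degree less than $E/(4n)$. The total edge loss is at most $m \cdot E/(4m) + n \cdot E/(4n) = E/2$, so the resulting subgraph $G' \subseteq G$ has $|E(G')| \ge E/2$, every surviving vertex of $A$ has degree at least $a \coloneqq E/(4m)$ in $G'$, and every surviving vertex of $B$ has degree at least $b \coloneqq E/(4n)$. If $a < 2$ or $b < 2$ then the trivial bound $E < 8\max\{m,n\}$ already implies the theorem through its additive error term, so I may assume $a, b \ge 2$.

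Next I would pick $v \in A \cap V(G')$ and grow the BFS tree in $G'$ to depth $\ell$. The key girth observation is that $C_{\le 2\ell}$-freeness forces every vertex at BFS-distance at most $\ell$ from $v$ to be reached by a unique shortest path: two distinct paths of length $\le \ell$ with common endpoints would concatenate into a closed walk of length $\le 2\ell$ containing a forbidden even cycle. Consequently the explored neighbourhood is genuinely a tree, and each non-root explored vertex contributes at least $a - 1 \ge a/2$ or $b - 1 \ge b/2$ new children at the next level, alternating between the parts. For odd $\ell = 2k+1$, level $\ell$ lies in $B$ and contains at least
\begin{align*}
    a\cdot[(a-1)(b-1)]^{k} \;\ge\; \frac{a^{k+1}b^{k}}{2^{2k}}
\end{align*}
vertices, so $n \ge a^{k+1}b^{k}/2^{2k}$, which after substituting $a=E/(4m)$ and $b=E/(4n)$ rearranges to $E \le C_\ell \cdot (nm)^{1/2 + 1/(2\ell)}$. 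For even $\ell = 2k$, level $\ell$ lies in $A$ with at least $a(b-1)[(a-1)(b-1)]^{k-1} \ge a^{k}b^{k}/2^{2k-1}$ vertices, yielding $m \ge a^{k}b^{k}/2^{2k-1}$ and hence $E \le C'_\ell \cdot (nm)^{1/2}\, m^{1/\ell}$.

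The main obstacle will be bookkeeping the constants tightly enough to achieve the advertised prefactor $4$ and additive term $4(n+m)$; this requires optimising the pruning threshold and carefully tracking powers of $2$ through the recursion, rather than any new idea. The two ``in particular'' assertions then fall out: for even $\ell$ with $n \ge m$, the inequality $(nm)^{1/2}m^{1/\ell} \le (nm)^{1/2+1/(2\ell)}$ reduces to $(n/m)^{1/(2\ell)} \ge 1$, so the odd-form bound dominates uniformly; and under $m \le n^{(\ell-1)/(\ell+1)}$ a direct exponent computation gives $(nm)^{1/2+1/(2\ell)} \le n$, whence $4((nm)^{1/2+1/(2\ell)} + n + m) \le 4(n + n + m) \le 12n$.
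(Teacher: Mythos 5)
The paper does not prove this statement at all: Theorem~\ref{THM:NV05-C2k-Zaran} is quoted from Naor--Verstra\"ete~\cite{Naor05}, so there is no internal proof to compare against, and your breadth-first-search Moore-bound argument is in any case a different (Hoory-style) route from Naor--Verstra\"ete's path-counting proof. Where it applies, your skeleton is sound: for a bipartite graph, $C_{\le 2\ell}$-freeness is exactly girth $>2\ell$, so the ball of radius $\ell$ around any vertex of the pruned graph is a tree, and your level counts produce the correct exponents $(nm)^{1/2+1/(2\ell)}$ and $(nm)^{1/2}m^{1/\ell}$ in the two parities; the two ``in particular'' deductions are also verified correctly.

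There is, however, a genuine gap in the degenerate case, and it is not merely constant bookkeeping. When $b=E/(4n)<2$ you fall back on ``the trivial bound $E<8\max\{m,n\}$ already implies the theorem through its additive error term,'' but this implication is false: for $n\ge m$ it would require $8n\le 4\left((nm)^{1/2+1/(2\ell)}+n+m\right)$, i.e.\ $(nm)^{1/2+1/(2\ell)}+m\ge n$, which fails whenever $m$ is substantially below $n^{(\ell-1)/(\ell+1)}$. Thus the whole range $4\left((nm)^{1/2+1/(2\ell)}+n+m\right)<E<8n$ is left uncovered, and in that range your BFS cannot even be started, because after pruning the $B$-side minimum degree need not exceed $1$ and a degree-one vertex of $B$ contributes no children, so the tree stalls. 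Closing this needs an extra idea --- for instance, first passing to the $2$-core (which costs at most $n+m$ edges and is absorbed by the additive term) and then pruning only the $A$-side --- rather than tighter tracking of powers of $2$. Separately, even in the main regime your estimates $a-1\ge a/2$ and $b-1\ge b/2$ yield a leading constant of $4\cdot 2^{1-1/\ell}$, which approaches $8$; since the stated constant is used quantitatively later in the paper (the $12n$ bound in Claim~\ref{CLAIM:girth-U1}), recovering the factor $4$ is not cosmetic, and it is not clear that optimising the pruning threshold alone achieves it.
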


Recall that an ordered sequence of vertices $v_1, \ldots, v_{\ell+1} \in V(G)$ is a \textbf{walk} of length $\ell$ in a graph $G$ if $v_iv_{i+1} \in G$ for all $i \in [\ell]$. 
We use $W_{\ell+1}(G)$ to denote the number of walks of length $\ell$ in $G$. 
% For convenience, we use $P_{\ell+1}(G)$ to denote the number of paths of length $\ell$. 

The following result will be useful for the proof of Theorem~\ref{THM:graph-critical-point}~\ref{THM:graph-critical-point-1}. 
The case where $k$ is even appears in~{\cite[Theorem~4]{Erdos82}}, while the case where both $k$ and $\ell$ are odd follows from the more general result of Sa{\u g}lam~{\cite[Theorem~1.3]{Saglam2018}}. 

\begin{theorem}[Erd\H{o}s--Simonovits~\cite{Erdos82}, Sa{\u g}lam~\cite{Saglam2018}]\label{THM:path-hom}
    Suppose that $k \ge \ell \ge 1$ are integers such that $k$ is even or $\ell$ is odd. Then for every graph $G$ on $n$ vertices, we have
    \begin{align*}
        \left(\frac{W_{k+1}(G)}{n}\right)^{1/k} 
        \ge  \left(\frac{W_{\ell+1}(G)}{n}\right)^{1/\ell}.
    \end{align*}
\end{theorem}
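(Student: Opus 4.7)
The plan is to translate walks into spectral data for the adjacency matrix $A$ of $G$, via the identity $W_{j+1}(G) = \mathbf{1}^\top A^j \mathbf{1}$. Diagonalize $A = \sum_{i=1}^n \lambda_i v_i v_i^\top$ using an orthonormal eigenbasis, and expand $\mathbf{1} = \sum_i c_i v_i$ with $\sum_i c_i^2 = n$. Then $p_i \coloneqq c_i^2/n$ defines a probability distribution on $[n]$ and
\begin{align*}
\frac{W_{j+1}(G)}{n} = \sum_i p_i \lambda_i^j = \mathbb{E}_p[\lambda^j],
\end{align*}
so the target inequality becomes $\mathbb{E}_p[\lambda^k]^{1/k} \ge \mathbb{E}_p[\lambda^\ell]^{1/\ell}$.

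When $k$ is even, one has $\lambda_i^k = |\lambda_i|^k$, so $\mathbb{E}_p[\lambda^k]^{1/k}$ is the $L^k(p)$-norm of the non-negative random variable $|\lambda|$, and the Power Mean Inequality (Fact~\ref{FACT:Holder}) gives $\mathbb{E}_p[|\lambda|^k]^{1/k} \ge \mathbb{E}_p[|\lambda|^\ell]^{1/\ell}$. If $\ell$ is even this is the claim; if $\ell$ is odd, then $\lambda^\ell \le |\lambda|^\ell$ pointwise, so $\mathbb{E}_p[\lambda^\ell] \le \mathbb{E}_p[|\lambda|^\ell]$, and since $\mathbb{E}_p[\lambda^\ell] = W_{\ell+1}(G)/n \ge 0$, raising to the $1/\ell$-power preserves the inequality, which finishes this case.

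The hard case, and the main obstacle, is when $k$ and $\ell$ are both odd, because $\lambda_i^k$ can be negative and the Power Mean Inequality fails. Setting $a_j \coloneqq A^j \mathbf{1}$ and using the identities $W_{j+1}(G) = a_i \cdot a_{j-i}$ for any $0 \le i \le j$, Cauchy--Schwarz applied to $a_s$ and $a_{s+1}$ yields the log-convexity bound at odd indices
\begin{align*}
    W_{\ell+1}(G)^2 = (a_s \cdot a_{s+1})^2 \le \|a_s\|_2^2 \|a_{s+1}\|_2^2 = W_{\ell}(G) \cdot W_{\ell+2}(G), \quad \text{where } \ell = 2s+1,
\end{align*}
which together with the already-proved even case controls the smaller odd walk count $W_{\ell+1}(G)$ from above in terms of adjacent even counts. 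The missing piece is a matching \emph{lower} bound on the larger odd walk count $W_{k+1}(G)$: Cauchy--Schwarz alone only gives the opposite direction, and naive interpolation through the even count $W_{k+2}(G)$ is too weak, since $\left(W_{k+2}(G)/n\right)^{1/(k+1)} \ge \left(W_{k+1}(G)/n\right)^{1/k}$ already by the even case, so the interpolated bound points the wrong way. Following Sa\u{g}lam's approach, one refines the spectral estimate by partitioning the eigenvalues of $A$ into their positive and negative parts and applying Hölder-type bounds to each piece separately, exploiting the structural constraint that every signed spectral sum $\mathbb{E}_p[\lambda^j] = W_{j+1}(G)/n$ is non-negative. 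Combining this refined lower bound on $W_{k+1}(G)^{1/k}$ with the Cauchy--Schwarz-based upper bound on $W_{\ell+1}(G)^{1/\ell}$ yields the theorem in the remaining case.
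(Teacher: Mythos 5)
The paper does not prove Theorem~\ref{THM:path-hom}; as noted in the sentence preceding its statement, the $k$-even case is cited to Erd\H{o}s--Simonovits~\cite{Erdos82} and the both-odd case to Sa{\u g}lam~\cite{Saglam2018}, so there is no in-paper argument to compare against. I therefore assess your proposal on its own terms.

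Your spectral reduction to $\mathbb{E}_p[\lambda^k]^{1/k}\ge\mathbb{E}_p[\lambda^\ell]^{1/\ell}$ is set up correctly, and the $k$-even case is proved completely and correctly: $\lambda^k=|\lambda|^k$, the Power Mean inequality for the probability measure $p$ gives $\mathbb{E}_p[|\lambda|^k]^{1/k}\ge\mathbb{E}_p[|\lambda|^\ell]^{1/\ell}$, and for odd $\ell$ the pointwise bound $\lambda^\ell\le|\lambda|^\ell$ together with $W_{\ell+1}(G)\ge 0$ lets you take $\ell$-th roots. (The paper's Fact~\ref{FACT:Holder} is stated only for uniform weights, but the weighted version is the standard Jensen extension, so this is fine.) Your diagnosis of why both-odd is genuinely harder is also accurate: the Cauchy--Schwarz identity $W_{\ell+1}^2\le W_\ell W_{\ell+2}$ bounds the odd walk count from \emph{above}, and interpolating $W_{k+1}$ through the adjacent even exponent gives the inequality in the wrong direction.

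The both-odd case, however, is not actually proved, and this is a genuine gap — it is precisely the contribution attributed to Sa{\u g}lam, and the paper genuinely needs it, since Claim~\ref{CLAIM:even-cycle-hom-H} applies this theorem with lower exponent $3$ and upper exponent ranging over odd as well as even values. Your closing paragraph describes a plan (``partition eigenvalues into positive and negative parts and apply H\"older-type bounds to each piece'') but does not carry it out, and the natural execution fails to close the gap: writing $\mathbb{E}_p[\lambda^j]=A_j-B_j$ for odd $j$ with $A_j=\sum_{\lambda_i>0}p_i\lambda_i^j$ and $B_j=\sum_{\lambda_i<0}p_i|\lambda_i|^j$, Power Mean gives $A_\ell\le q_P^{1-\ell/k}A_k^{\ell/k}$ and $B_\ell\le q_N^{1-\ell/k}B_k^{\ell/k}$ (with $q_P,q_N$ the two masses), but these push $A_\ell-B_\ell$ in opposite directions, and neither the normalization $q_P+q_N\le 1$ nor the single constraint $A_j\ge B_j$ combines them into the required $(A_k-B_k)^\ell\ge(A_\ell-B_\ell)^k$. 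Some further structural input is needed that the sketch does not identify. As written, the proposal establishes only the Erd\H{o}s--Simonovits half of the theorem.
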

\begin{proposition}\label{PROP:W4-3/2-norm}
    For every graph $G$ we have 
    \begin{align*}
        W_{4}(G)
        \ge \frac{\norm{G}_{3/2}^{2}}{|V(G)|}. 
    \end{align*}
\end{proposition}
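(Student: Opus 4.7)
The plan is to rewrite $\norm{G}_{3/2}$ as a sum over ordered adjacent pairs and then apply the Cauchy--Schwarz inequality with a splitting designed to produce $W_{4}(G)$.

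First, using the identity $d(v) = \sum_{u \colon u \sim v} 1$, I would rewrite
\begin{align*}
    \norm{G}_{3/2}
    = \sum_{v \in V(G)} d(v)^{1/2} \cdot d(v)
    = \sum_{(u,v) \colon u \sim v} d(v)^{1/2},
\end{align*}
where the last sum ranges over ordered pairs of adjacent vertices of $G$. I would also record the companion identity
\begin{align*}
    W_{4}(G)
    = \sum_{(u,v) \colon u \sim v} d(u)\, d(v),
\end{align*}
which follows because a walk $v_{1}v_{2}v_{3}v_{4}$ of length three is determined by its middle ordered edge $(v_{2},v_{3})$ together with a choice of neighbor of $v_{2}$ and a choice of neighbor of $v_{3}$.

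Since every ordered pair $(u,v)$ with $u \sim v$ satisfies $d(u) \ge 1$, the split $d(v)^{1/2} = d(u)^{-1/2} \cdot \bigl(d(u)\, d(v)\bigr)^{1/2}$ is well-defined, and the Cauchy--Schwarz inequality yields
\begin{align*}
    \norm{G}_{3/2}^{2}
    \le \left(\sum_{(u,v) \colon u \sim v} \frac{1}{d(u)}\right)
      \cdot \left(\sum_{(u,v) \colon u \sim v} d(u)\, d(v)\right).
\end{align*}
Fixing $u$, the inner first sum contributes $d(u) \cdot \frac{1}{d(u)} = 1$ for each non-isolated vertex, so the first factor equals the number of non-isolated vertices of $G$ and is in particular at most $|V(G)|$; the second factor equals $W_{4}(G)$ by the identity above. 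Combining yields $\norm{G}_{3/2}^{2} \le |V(G)| \cdot W_{4}(G)$, which is the desired inequality.

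The argument is quite short, and there is no substantial obstacle: the only subtlety is guessing the correct Cauchy--Schwarz splitting, which is essentially forced once one demands that the two resulting factors recover $W_{4}(G)$ and a quantity of order $|V(G)|$.
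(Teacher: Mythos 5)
Your proof is correct and follows essentially the same route as the paper: the same Cauchy--Schwarz splitting $d(v)^{1/2} = (d(u)d(v))^{1/2}\cdot d(u)^{-1/2}$ over ordered adjacent pairs, yielding the two factors $W_4(G)$ and (at most) $|V(G)|$. The only cosmetic difference is that you explicitly observe the first factor counts non-isolated vertices, a small point the paper glosses over by writing an equality.
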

\begin{proof}[Proof of Proposition~\ref{PROP:W4-3/2-norm}]
    It follows from the Cauchy--Schwarz Inequality that
    \begin{align*}
        \left(\sum_{uv\in G} d_{G}^{1/2}(v)\right)^{2}
        & = \left(\sum_{uv\in G}  \left(d_{G}(u) d_{G}(v)\right)^{1/2} \cdot  \left(\frac{1}{d_{G}(u)}\right)^{1/2}\right)^{2} \\
        & \le \left( \sum_{uv\in G} d_{G}(u) d_{G}(v) \right) \cdot \left(\sum_{uv\in G} d_{G}^{-1}(u)\right).
    \end{align*}
    Consequently, 
    \begin{align*}
        W_{4}(G)
        = \sum_{uv\in G} d_{G}(u)d_{G}(v) 
        & \ge \frac{\left(\sum_{uv\in G} d_{G}^{1/2}(v)\right)^{2}}{\sum_{uv\in G} d_{G}^{-1}(u)} \\
        & = \frac{\left(\sum_{v\in V(G)} d_{G}^{1/2}(v) \cdot d_{G}(v)\right)^{2}}{\sum_{u\in V(G)} d_{G}^{-1}(u) \cdot d_{G}(u)} 
        = \frac{\norm{G}_{3/2}^{2}}{|V(G)|}, 
    \end{align*}
    as desired. 
\end{proof}

First, we prove the upper bound for $\mathrm{ex}_{\ell/(\ell-1)}(n,\{C_{4}, \ldots, C_{2\ell}\})$.
\begin{proof}[Proof of Theorem~\ref{THM:graph-critical-point}~\ref{THM:graph-critical-point-1}]
Fix an integer $\ell \ge 3$. 
Let $p \coloneqq \frac{\ell}{\ell-1}$.
Let $C \coloneqq 52 \cdot 2^p < 765/3^{p}$ and let $\varepsilon>0$ be sufficiently small. 
Notice from Proposition~\ref{PROP:r-partite-subgp-p-norm} that for large $n$, 
\begin{align*}
    \mathrm{ex}_{p}(n,C_{\le 2\ell})
     \le \mathrm{ex}_{p}(2n,C_{\le 2\ell})  
     \le (2+o(1))^{p} \cdot \mathrm{ex}_{p}(n,n,C_{\le 2\ell}) 
     \le 3^p \cdot \mathrm{ex}_{p}(n,n,C_{\le 2\ell}). 
\end{align*}
So it suffices to prove that $\mathrm{ex}_{p}(n,n,C_{\le 2\ell}) < C n^{p}$ for all large $n$. 
Suppose to the contrary that this fails. 
Then there exists a $C_{\le 2\ell}$-free bipartite graph $G = G[V_1, V_2]$ with $|V_1| = |V_2| = n$ such that $\norm{G}_p = Cn^{p}$. 
By symmetry, we may assume that 
%$\sum_{v\in V_1}d_{G}^{p}(v) \ge \sum_{v\in V_2}d_{G}^{p}(v)$. It follows that 
    \begin{align}\label{equ:even-cycle-V1}
        \sum_{v\in V_1}d_{G}^{p}(v)
        \ge \frac{1}{2} \left(\sum_{v\in V_1}d_{G}^{p}(v) + \sum_{v\in V_2}d_{G}^{p}(v)\right)
        = \frac{\norm{G}_{p}}{2}
        \ge \frac{C}{2}n^{p}. 
    \end{align}
    Let 
    \begin{align*}
        U_1 
        \coloneqq \left\{v\in V_1 \colon d_{G}(v) \ge n^{1-\varepsilon}\right\}
        \quad\text{and}\quad 
        U_2
        \coloneqq \left\{v\in V_1 \colon d_{G}(v) \in [n^{1/\ell + \varepsilon}, n^{1-\varepsilon})\right\}.
    \end{align*}
\begin{claim}\label{CLAIM:girth-U1}
        We have $\sum_{v\in U_1}d_{G}^{p}(v) \le 12 n^p$.
\end{claim}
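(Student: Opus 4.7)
My plan is to exploit the fact that the vertices in $U_1$ have very high degree ($\ge n^{1-\varepsilon}$), so their very existence in large numbers would violate the Zarankiewicz-type bound provided by Theorem~\ref{THM:NV05-C2k-Zaran}. Since $G$ is bipartite with parts $V_1 \supseteq U_1$ and $V_2$, every edge incident to a vertex of $U_1$ lies in the induced semibipartite graph $G[U_1, V_2]$, and this subgraph is of course $C_{\le 2\ell}$-free. Therefore
\[
|U_1|\, n^{1-\varepsilon} \;\le\; \sum_{v\in U_1} d_G(v) \;=\; |E(G[U_1, V_2])|.
\]

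Next I would plug this lower bound into Theorem~\ref{THM:NV05-C2k-Zaran}, which gives $|E(G[U_1, V_2])| \le 4((n|U_1|)^{(\ell+1)/(2\ell)} + n + |U_1|)$. Solving the resulting inequality for $|U_1|$ in the two regimes (linear term dominating vs.\ the $(n|U_1|)^{(\ell+1)/(2\ell)}$ term dominating) yields $|U_1| = O(n^{\varepsilon'})$ for some $\varepsilon'$ close to $\varepsilon$, once $\varepsilon$ is chosen small enough relative to $1/\ell$. In particular, for sufficiently small $\varepsilon>0$ and all sufficiently large $n$, we obtain the much weaker bound $|U_1| \le n^{(\ell-1)/(\ell+1)}$.

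Having established this, I would invoke the second (sharper) clause of Theorem~\ref{THM:NV05-C2k-Zaran}: whenever $m \le n^{(\ell-1)/(\ell+1)}$, one has $\mathrm{ex}(m,n,C_{\le 2\ell}) \le 12 n$. Applying this to $G[U_1, V_2]$ gives
\[
\sum_{v\in U_1} d_G(v) \;=\; |E(G[U_1,V_2])| \;\le\; 12 n.
\]

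Finally, I would convert this edge bound into the desired $p$-norm bound using the trivial degree bound $d_G(v) \le n$. Since $p = \ell/(\ell-1) > 1$, every vertex satisfies $d_G(v)^{p-1} \le n^{p-1}$, and hence
\[
\sum_{v\in U_1} d_G^{p}(v) \;\le\; n^{p-1} \sum_{v\in U_1} d_G(v) \;\le\; n^{p-1} \cdot 12 n \;=\; 12 n^p,
\]
which is exactly the claim. There is no real obstacle here beyond correctly choosing $\varepsilon$ small enough (relative to $1/\ell$) in the initial regime analysis so that the first-step bound on $|U_1|$ falls below the threshold $n^{(\ell-1)/(\ell+1)}$ that unlocks the efficient $12n$ bound in Theorem~\ref{THM:NV05-C2k-Zaran}.
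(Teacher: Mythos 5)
Your proposal is correct, and the last two steps (apply the second clause of Theorem~\ref{THM:NV05-C2k-Zaran} once $|U_1|\le n^{(\ell-1)/(\ell+1)}$, then upgrade via $d_G(v)^{p-1}\le n^{p-1}$) are exactly what the paper does. The one place you diverge is the preliminary bound on $|U_1|$. The paper gets $|U_1|\le n^{1/\ell+\varepsilon}$ in one line from the Lam--Verstra\"ete edge bound (Theorem~\ref{THM:girth-upper}): $|G|\le n^{1+1/\ell}$ together with the minimum degree $n^{1-\varepsilon}$ on $U_1$ gives $|U_1|\le |G|/n^{1-\varepsilon}$. You instead bootstrap by applying the general clause of Theorem~\ref{THM:NV05-C2k-Zaran} to $G[U_1,V_2]$ itself and solving $|U_1|\,n^{1-\varepsilon}\le 4\bigl((n|U_1|)^{(\ell+1)/(2\ell)}+n+|U_1|\bigr)$ for $|U_1|$. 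That works (the regime where the $(n|U_1|)^{(\ell+1)/(2\ell)}$ term dominates is vacuous for small $\varepsilon$ and large $n$, leaving $|U_1|=O(n^{\varepsilon})$, comfortably below $n^{(\ell-1)/(\ell+1)}$ for $\ell\ge 3$), and it has the small advantage of not invoking Theorem~\ref{THM:girth-upper} at all; the price is a two-case analysis that the paper avoids. Either route is fine, and in both the constraint on $\varepsilon$ is easily met.
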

\begin{proof}[Proof of Claim~\ref{CLAIM:girth-U1}]
        Since $G$ is $C_{\le 2\ell}$-free, it follows from Theorem~\ref{THM:girth-upper} (see also~\cite{AHL02}) that $|G| \le \mathrm{ex}(2n,C_{\le 2\ell}) \le (2^{1/\ell}+o(1))n^{1+1/\ell} \le 2n^{1+1/\ell}$.
        Therefore, 
        \begin{align*}
            |U_1| 
            \le \frac{|G|}{n^{1-\varepsilon}}
            \le \frac{2n^{1+1/\ell}}{n^{1-\varepsilon}}
            = 2n^{1/\ell + \varepsilon}. 
        \end{align*}
        Since $\frac{1}{\ell} + \varepsilon < \frac{\ell-1}{\ell+1}$ for $\ell\ge 3$, it follows from Theorem~\ref{THM:NV05-C2k-Zaran} that 
        \begin{align*}
            |G[U_1, V_2]|
            \le 12 n.
        \end{align*}
        Combining this with Fact~\ref{FACT:p-q-norm-upper}, we obtain 
        \begin{align*}%\label{equ:even-cycle-U1}
            \sum_{v\in U_1}d_{G}^{p}(v)
            \le \sum_{v\in U_1}d_{G}(v) \cdot n^{p-1}
            = |G[U_1, V_2]| \cdot n^{p-1}
            \le 12 n^{p},  
        \end{align*}
        which proves Claim~\ref{CLAIM:girth-U1}. 
\end{proof}
\begin{claim}\label{CLAIM:girth-U2}
        We have $\sum_{v\in U_2}d_{G}^{p}(v) \le n^p$.
\end{claim}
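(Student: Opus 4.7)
The plan is to stratify $U_2$ into $O(\log n)$ dyadic degree classes and apply Theorem~\ref{THM:NV05-C2k-Zaran} to each class. For every nonnegative integer $i$ with $n^{1/\ell + \varepsilon} \le 2^{i} < n^{1-\varepsilon}$, define $U_2^{(i)} \coloneqq \{v \in U_2 \colon 2^{i} \le d_G(v) < 2^{i+1}\}$ and $m_i \coloneqq |U_2^{(i)}|$. There are at most $\log n$ indices $i$ for which $U_2^{(i)}$ is nonempty, so it suffices to show that each class contributes at most a polynomial saving over $n^p$ to $\sum_{v \in U_2} d_G^p(v)$.

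Since the bipartite graph $G[U_2^{(i)}, V_2]$ is $C_{\le 2\ell}$-free with $m_i \le n$ vertices on one side and $n$ on the other, Theorem~\ref{THM:NV05-C2k-Zaran} gives
\[
m_i \cdot 2^{i} \le |G[U_2^{(i)}, V_2]| \le 4\bigl((m_i n)^{1/2 + 1/(2\ell)} + n + m_i\bigr).
\]
The additive $4 m_i$ is negligible compared to the left-hand side since $2^{i} \ge n^{1/\ell + \varepsilon} \gg 4$, so up to a factor of $2$ one of the other two right-hand terms dominates, and I split into two cases. If $m_i \cdot 2^{i} \le 16 (m_i n)^{1/2 + 1/(2\ell)}$, isolating $m_i$ and using $2\ell/(\ell-1) = 2p$ yields $m_i \le C_1 n^{(\ell+1)/(\ell-1)} \cdot 2^{-2 i p}$; substituting the lower bound $2^{i} \ge n^{1/\ell + \varepsilon}$ into $m_i (2^{i+1})^p = 2^p m_i \cdot 2^{ip}$ and simplifying via the identity $(\ell+1)/(\ell-1) - (1+\varepsilon \ell)/(\ell-1) = p(1-\varepsilon)$ gives $m_i (2^{i+1})^p \le 2^p C_1 \, n^{p(1-\varepsilon)}$. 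If instead $m_i \cdot 2^{i} \le 16 n$, then using the upper bound $2^{i} < n^{1-\varepsilon}$ I obtain
\[
m_i (2^{i+1})^p = 2^p (m_i \cdot 2^{i})(2^{i})^{p-1} \le 32 \cdot 2^p \, n^{1 + (1-\varepsilon)(p-1)} = 32 \cdot 2^p \, n^{p - \varepsilon/(\ell-1)}.
\]

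Either way, each dyadic class contributes at most $C \, n^{p - \varepsilon'}$ for some $\varepsilon' > 0$ and constant $C$ depending only on $\ell$ and $\varepsilon$. Summing over the at most $\log n$ nonempty classes yields $\sum_{v \in U_2} d_G^p(v) \le C (\log n) \, n^{p - \varepsilon'}$, which is well below $n^p$ for sufficiently large $n$. The delicate point is the careful handling of the lower-order additive terms in the Naor--Verstra\"{e}te estimate: the lower bound $d_G(v) \ge n^{1/\ell + \varepsilon}$ in the definition of $U_2$ is precisely what lets us discard the $m_i$-term, while the upper bound $d_G(v) < n^{1-\varepsilon}$ supplies the polynomial saving needed to beat the $n$-term. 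Both endpoints of the window defining $U_2$ are used critically.
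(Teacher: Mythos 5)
Your proof is correct and rests on the same two ingredients as the paper's: a dyadic decomposition of $U_2$ by degree, and the Naor--Verstra\"{e}te Zarankiewicz bound (Theorem~\ref{THM:NV05-C2k-Zaran}), with both endpoints of the window $[n^{1/\ell+\varepsilon}, n^{1-\varepsilon})$ supplying the polynomial saving. The only difference is organizational: you bound each dyadic class directly and sum, while the paper applies the Pigeonhole Principle to isolate one dense class $W_i$ and derives a contradiction from the edge count in $G[W_i,V_2]$, handling the additive $n$- and $m$-terms somewhat more tersely than you do.
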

\begin{proof}[Proof of Claim~\ref{CLAIM:girth-U2}]
    Let $t \coloneqq \lceil \log n \rceil$. 
    For every $i \in [t]$, let 
    \begin{align*}
        W_i
        \coloneqq \left\{v\in U_2 \colon d_{G}(v) \in [2^{i-1} \cdot n^{1/\ell+\varepsilon}, 2^{i} \cdot n^{1/\ell+\varepsilon})\right\}.
    \end{align*}
    Suppose to the contrary that $\sum_{v\in U_2}d_{G}^{p}(v) > n^p$. 
    Then, it follows from the Pigeonhole Principle that there exists $i \in [t]$ with 
    \begin{align*}
        \sum_{v\in W_i} d_{G}^{p}(v) 
        \ge \frac{\sum_{v\in U_2} d_{G}^{p}(v)}{t}
        \ge \frac{n^p}{t}.
    \end{align*}
    Let $\beta \in [1/\ell+\varepsilon, 1-\varepsilon]$ be the real number such that $n^{\beta} = 2^{i-1} n^{1/\ell+\varepsilon}$. 
    It follows from the definition of $W_i$ that 
    \begin{align*}
        \sum_{v\in W_i} d_{G}(v) 
        \ge \sum_{v\in W_i} \frac{d_{G}^{p}(v)}{(2n^{\beta})^{p-1}}
        = \frac{\sum_{v\in W_i} d_{G}^{p}(v)}{2^{p-1} n^{(p-1)\beta}}
        \ge \frac{n^{p-(p-1)\beta}}{2^{p-1} t}. 
        %= \frac{n^{(p-1)(1-\beta)}}{2^{p-1} \log n} \cdot n.  
    \end{align*}
    Consequently, 
    \begin{align*}
        |G[W_i, V_2]|
        = \sum_{v\in W_i} d_{G}(v) 
        & = \left(\sum_{v\in W_i} d_{G}(v) \right)^{\frac{1}{2}+\frac{1}{2\ell}}  \left(\sum_{v\in W_i} d_{G}(v) \right)^{\frac{1}{2}-\frac{1}{2\ell}} \\
        & \ge \left(|W_i| \cdot n^{\beta} \right)^{\frac{1}{2}+\frac{1}{2\ell}}  \left(\frac{n^{p-(p-1)\beta}}{2^{p-1} t} \right)^{\frac{1}{2}-\frac{1}{2\ell}}.
        % & = \left(|W_i| \cdot n \right)^{\frac{1}{2}+\frac{1}{2\ell}} 
    \end{align*}
    Since $p = \frac{\ell}{\ell-1}$ and $\beta \ge \frac{1}{\ell} + \varepsilon$, we have 
    \begin{align*}
        \beta \cdot \left(\frac{1}{2}+\frac{1}{2\ell}\right) + \left(p-(p-1)\beta\right)\cdot  \left(\frac{1}{2}-\frac{1}{2\ell}\right)
        = \frac{1+\beta}{2}
        \ge \frac{1}{2}+\frac{1}{2\ell} + \frac{\varepsilon}{2}. 
    \end{align*}
    Therefore, 
    \begin{align*}
         |G[W_i, V_2]|
         \ge \left(|W_i| \cdot n \right)^{\frac{1}{2}+\frac{1}{2\ell}} \frac{n^{\varepsilon/2}}{\left(2^{p-1} t\right)^{\frac{1}{2}-\frac{1}{2\ell}}}.
    \end{align*}
    Since $\varepsilon > 0$ and $n$ is sufficiently large, it follows from Theorem~\ref{THM:NV05-C2k-Zaran} that $|G[W_i, V_2]| > \mathrm{ex}(|W_i|, |V_2|, C_{\le 2\ell})$, a contradiction.
\end{proof}
Let $V_1' \coloneqq V_{1}\setminus (U_1 \cup U_2)$. 
It follows from~\eqref{equ:even-cycle-V1}, Claims~\ref{CLAIM:girth-U1} and~\ref{CLAIM:girth-U2} that 
    \begin{align}\label{equ:girth-V1'-G}
        \sum_{v\in V_1'}d_{G}^{p}(v)
        & = \sum_{v\in V_1}d_{G}^{p}(v) - \left(\sum_{v\in U_1}d_{G}^{p}(v) + \sum_{v\in U_2}d_{G}^{p}(v)\right)  
         \ge \frac{C}{2}n^{p} - 12 n^{p} - n^{p}
        \ge \frac{C}{4}n^{p}.
    \end{align}
    Let 
    \begin{align*}
        G_1 \coloneqq G[V_1', V_2],
        \quad 
        \tilde{U}
        \coloneqq \left\{v\in V_2 \colon d_{G_1}(v) \ge n^{1/\ell+\varepsilon}\right\},
        \quad\text{and}\quad
        G_2 \coloneqq G[V_1', \tilde{U}].
    \end{align*}
Similar to Claims~\ref{CLAIM:girth-U1} and~\ref{CLAIM:girth-U2}, we have 
    \begin{align*}
        \sum_{v\in \tilde{U}} d_{G_1}^{p}(v)
        \le 12 n^{p} + n^{p} = 13 n^{p}. 
    \end{align*}
    Combining this with Fact~\ref{FACT:p-q-norm-upper}, we obtain 
    \begin{align}\label{equ:girth-V1'}
        \sum_{v\in V_1'}d_{G_2}^{p}(v)
        \le \sum_{v\in V_1'}d_{G_2}(v) \cdot \left(n^{1/\ell+\varepsilon}\right)^{p-1} 
        & = \sum_{u\in \tilde{U}}d_{G_2}(u) \cdot \left(n^{1/\ell+\varepsilon}\right)^{p-1}  \notag \\
        & \le \sum_{u\in \tilde{U}}d_{G_2}^{p}(u)
        \le 13 n^{p}. 
    \end{align}
Let  
    \begin{align*}
        V_2' \coloneqq V_2 \setminus \tilde{U}
        \quad\text{and}\quad 
        H \coloneqq G[V_1', V_2'].
    \end{align*}
It is clear from the definitions of $V_1'$ and $V_2'$ that $\Delta(H) \le n^{1/\ell+\varepsilon}$. 
\begin{claim}\label{CLAIM:graph-critical-cycle-max-deg}
    We have $\norm{H}_p \ge  \sum_{v\in V_1'}d_{H}^{p}(v) \ge 4n^{p}$.
\end{claim}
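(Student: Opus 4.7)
The plan is to write $d_G(v) = d_H(v) + d_{G_2}(v)$ for every $v \in V_1'$ and apply Minkowski's inequality, then substitute the bounds already established in~\eqref{equ:girth-V1'-G} and~\eqref{equ:girth-V1'}.

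To see the decomposition, note that since $V_1' \subseteq V_1$ and $G$ is bipartite with parts $V_1, V_2$, every edge of $G$ incident to $v \in V_1'$ has its other endpoint in $V_2 = \tilde{U} \sqcup V_2'$. Since $G_2 = G[V_1', \tilde{U}]$ and $H = G[V_1', V_2']$, this gives $d_G(v) = d_{G_2}(v) + d_H(v)$ for every $v \in V_1'$.

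Applying Fact~\ref{FACT:Minkowski} to the nonnegative sequences $(d_H(v))_{v \in V_1'}$ and $(d_{G_2}(v))_{v \in V_1'}$, we obtain
\begin{align*}
\left(\sum_{v \in V_1'} d_G^p(v)\right)^{1/p}
= \left(\sum_{v \in V_1'} (d_H(v) + d_{G_2}(v))^p\right)^{1/p}
\le \left(\sum_{v \in V_1'} d_H^p(v)\right)^{1/p} + \left(\sum_{v \in V_1'} d_{G_2}^p(v)\right)^{1/p},
\end{align*}
which after rearrangement and substitution of the lower bound $\sum_{v \in V_1'} d_G^p(v) \ge \frac{C}{4} n^p$ from~\eqref{equ:girth-V1'-G} and the upper bound $\sum_{v \in V_1'} d_{G_2}^p(v) \le 13 n^p$ from~\eqref{equ:girth-V1'} yields
\begin{align*}
\left(\sum_{v \in V_1'} d_H^p(v)\right)^{1/p}
\ge \left(\frac{C}{4}\right)^{1/p} n - 13^{1/p} n.
\end{align*}

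Recalling $C = 52 \cdot 2^p$, we have $C/4 = 13 \cdot 2^p$, so $(C/4)^{1/p} = 2 \cdot 13^{1/p}$. The bound above therefore becomes $13^{1/p} n$, giving $\sum_{v \in V_1'} d_H^p(v) \ge 13 n^p \ge 4 n^p$, as required. There is no real obstacle here; the main point is just to choose the constant $C$ large enough at the outset so that $(C/4)^{1/p} - 13^{1/p} \ge 4^{1/p}$, which is exactly what the choice $C = 52 \cdot 2^p$ was calibrated for.
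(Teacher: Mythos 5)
Your proposal is correct and takes essentially the same approach as the paper: both hinge on the decomposition $d_G(v) = d_H(v) + d_{G_2}(v)$ for $v \in V_1'$, Minkowski's inequality (Fact~\ref{FACT:Minkowski}), and the bounds~\eqref{equ:girth-V1'-G} and~\eqref{equ:girth-V1'}. The only difference is presentational: you rearrange Minkowski's inequality directly to get a lower bound (in fact obtaining the stronger $\ge 13n^p$), whereas the paper runs the same computation as a proof by contradiction, assuming $\sum_{v\in V_1'} d_H^p(v) < 4n^p$ and showing this forces $\sum_{v\in V_1'} d_G^p(v) < \frac{C}{4}n^p$.
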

\begin{proof}[Proof of Claim~\ref{CLAIM:graph-critical-cycle-max-deg}]
    Suppose to the contrary that $\sum_{v\in V_1'}d_{H}^{p}(v) < 4n^{p}$. 
    Then it follows from Fact~\ref{FACT:Minkowski} and~\eqref{equ:girth-V1'} that  
    \begin{align*}
        \sum_{v\in V_1'}d_{G}^{p}(v)
        = \sum_{v\in V_1'}\left(d_{G_2}(v) + d_{H}(v)\right)^{p}
        & \le \left(\left(\sum_{v\in V_1'}d_{G_2}^{p}(v)\right)^{1/p}+ \left(\sum_{v\in V_1'}d_{H}^{p}(v)\right)^{1/p} \right)^{p} \\
        & \le \left(\left(13 n^{p}\right)^{1/p}+ \left(4n^{p}\right)^{1/p} \right)^{p}
        < \frac{C}{4}n^{p}, 
    \end{align*}
    contradicting~\eqref{equ:girth-V1'-G}. Therefore, $\sum_{v\in V_1'}d_{H}^{p}(v) \ge 4n^{p}$. 
\end{proof}%CLAIM

\begin{claim}\label{CLAIM:even-cycle-hom-H}
    We have $W_{\ell+1}(H) \ge 4^{\ell -1}n^2$.
\end{claim}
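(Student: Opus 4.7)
The plan is to translate the degree $p$-norm lower bound $\sum_{v \in V_1'} d_H^p(v) \ge 4 n^p$ on $V_1'$ (where $p = \ell/(\ell-1)$) into a walk lower bound on $H$, by chaining the Power Mean Inequality (Fact~\ref{FACT:Holder}), a bipartite refinement of Proposition~\ref{PROP:W4-3/2-norm}, and Theorem~\ref{THM:path-hom}.

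First, I would apply the Power Mean Inequality with exponents $3/2 \ge p$ (valid since $\ell \ge 3$) on $V_1'$, and exploit $|V_1'| \le n$ together with the non-positivity of the exponent $1 - (3/2)/p = (3-\ell)/(2\ell)$ to get
\begin{align*}
\sum_{v \in V_1'} d_H^{3/2}(v) \;\ge\; n^{(3-\ell)/(2\ell)} \left(\sum_{v \in V_1'} d_H^{p}(v)\right)^{3(\ell-1)/(2\ell)} \;\ge\; 4^{3(\ell-1)/(2\ell)} \, n^{(2\ell + 3)/(2\ell)}.
\end{align*}

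Next, I would establish a bipartite refinement of Proposition~\ref{PROP:W4-3/2-norm}: replaying its Cauchy--Schwarz proof but restricting the sum to ordered edges $(u,v) \in E(H)$ with $u \in V_2'$ and $v \in V_1'$ gives
\begin{align*}
W_4^{V_1'}(H) \;:=\; \sum_{v_0 \in V_1'} w_3(v_0) \;\ge\; \frac{\left(\sum_{v \in V_1'} d_H^{3/2}(v)\right)^2}{|V_2'|}.
\end{align*}
Reversing walks of length $3$ (which, having odd length, swaps the bipartition side of the starting vertex) gives a bijection $W_4^{V_1'}(H) = W_4^{V_2'}(H)$, hence $W_4(H) = 2 W_4^{V_1'}(H) \ge 2 \bigl(\sum_{V_1'} d_H^{3/2}\bigr)^2 / n$.

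Finally, I would invoke Theorem~\ref{THM:path-hom} with $\ell_{\mathrm{old}} = 3$ (odd) and $k = \ell$ to obtain $W_{\ell+1}(H) \ge |V(H)|^{1-\ell/3} \cdot W_4(H)^{\ell/3}$. Substituting the two previous bounds and using $|V(H)| \le 2n$, the exponents of $n$ telescope to $n^2$, the base-$4$ exponent collapses via $3(\ell-1)/(2\ell) \cdot (2\ell/3) = \ell - 1$, and the residual powers of $2$ (from the bipartite doubling and from $(2n)^{1-\ell/3}$) combine to an overall constant factor of $2$, producing $W_{\ell+1}(H) \ge 2 \cdot 4^{\ell-1} n^2 \ge 4^{\ell-1} n^2$.

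The main obstacle is the exponent bookkeeping: the constants in the three inequalities must cascade exactly to $4^{\ell-1}$, not to a smaller power. This relies on both the bipartite refinement of Proposition~\ref{PROP:W4-3/2-norm} (which replaces the factor $|V(H)| \le 2n$ in the denominator by the tighter $|V_2'| \le n$) and on the identity $W_4(H) = 2 W_4^{V_1'}(H)$; without either of these savings, Theorem~\ref{THM:path-hom} amplifies the deficit by roughly $2^{\ell/3}$ and one falls short of the target constant.
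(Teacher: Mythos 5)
Your proposal is correct and arrives at the stated bound $4^{\ell-1}n^2$ (in fact at $2\cdot 4^{\ell-1}n^2$). The paper's own proof chains Theorem~\ref{THM:path-hom}, Proposition~\ref{PROP:W4-3/2-norm}, and Corollary~\ref{CORO:p-q-norm-Holder} \emph{without} exploiting the bipartition of $H$: it derives $W_{\ell+1}(H)\ge \norm{H}_p^{\ell-1}/\left(v(H)\right)^{\ell-2}$ and then substitutes $\norm{H}_p\ge 4n^p$ and $v(H)\le n$. But $H=G[V_1',V_2']$ has $v(H)=|V_1'|+|V_2'|\le 2n$, not $\le n$, so the paper's chain as written actually gives only $\norm{H}_p^{\ell-1}/(2n)^{\ell-2}\ge 2^{\ell}n^2$ rather than $4^{\ell-1}n^2$ — a harmless constant slip, since all that is needed downstream is $W_{\ell+1}(H)\gg n^2$. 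Your version repairs this by working one-sidedly: restricting the power-mean step to $V_1'$ (with $|V_1'|\le n$), proving the bipartite refinement $W_4^{V_1'}(H)\ge\left(\sum_{v\in V_1'}d_H^{3/2}(v)\right)^2/|V_2'|$ via the same Cauchy--Schwarz argument restricted to ordered edges $V_2'\to V_1'$, and noting $W_4(H)=2W_4^{V_1'}(H)$ by reversing walks of odd length. This gains the factor that is lost to $v(H)\le 2n$ in the paper's treatment and lands exactly on the claimed constant. So: same high-level skeleton (power mean $p\to 3/2$, Cauchy--Schwarz to $W_4$, then Theorem~\ref{THM:path-hom} from $W_4$ to $W_{\ell+1}$), but your bipartite-aware bookkeeping is genuinely sharper and is what actually delivers $4^{\ell-1}$; either route suffices for the rest of the proof of Theorem~\ref{THM:graph-critical-point}~\ref{THM:graph-critical-point-1}.
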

\begin{proof}[Proof of Claim~\ref{CLAIM:even-cycle-hom-H}]
    It follows from Theorem~\ref{THM:path-hom}, Proposition~\ref{PROP:W4-3/2-norm}, and Corollary~\ref{CORO:p-q-norm-Holder} that 
    \begin{align*}
        \left(\frac{W_{\ell+1}(H)}{v(H)}\right)^{1/\ell}
        \ge \left(\frac{W_{4}(H)}{v(H)}\right)^{1/3}
        & \ge \left(\frac{\norm{H}_{3/2}^{2}}{(v(H))^2}\right)^{1/3} \\
        & = \left(\frac{\norm{H}_{3/2}}{v(H)}\right)^{2/3}
        \ge \left(\frac{\norm{H}_{\frac{\ell}{\ell-1}}}{v(H)}\right)^{\frac{\ell-1}{\ell}}. 
        % \ge \left(\frac{2n^{p}}{n}\right)^{\frac{\ell-1}{\ell}}. 
    \end{align*}
    Combining this with Claim~\ref{CLAIM:graph-critical-cycle-max-deg}, we obtain 
    \begin{align*}
        W_{\ell+1}(H) 
        \ge v(H) \cdot \left(\frac{\norm{H}_{\frac{\ell}{\ell-1}}}{v(H)}\right)^{\ell-1}
        = \frac{\norm{H}_{\frac{\ell}{\ell-1}}^{\ell-1}}{\left(v(H)\right)^{\ell-2}}
        \ge \frac{\left(4n^p\right)^{\ell-1}}{n^{\ell-2}}
        = 4^{\ell-1} n^{2}.
    \end{align*}
    This proves Claim~\ref{CLAIM:even-cycle-hom-H}. 
\end{proof}%CLAIM
It follows from Claim~\ref{CLAIM:even-cycle-hom-H} that the number of paths of length $\ell$ in $H$, denoted by $P_{\ell+1}(H)$, satisfies 
\begin{align*}
    P_{\ell+1}(H) 
    & \ge \frac{1}{2}\left( W_{\ell+1}(H) - \binom{\ell+1}{2} \cdot 2n \cdot \left(\Delta(H)\right)^{\ell-1} \right)  \\
    & \ge \frac{1}{2}\left( 4^{\ell-1}n^2 - 2\binom{\ell+1}{2} n^{1+\left(\frac{1}{\ell}+\varepsilon\right)(\ell-1)}  \right)
    > \binom{2n}{2}.
\end{align*}
Therefore, there exist two paths of length $\ell$ that share the same endpoints. 
Since $H$ is bipartite, this implies that $G$ contains a copy of $C_{2i}$ for some $i \in [2, \ell]$, a contradiction. 
\end{proof}

Theorem~\ref{THM:graph-critical-point}~\ref{THM:graph-critical-point-2} is an immediate consequence of Theorem~\ref{THM:graph-critical-point}~\ref{THM:graph-critical-point-1} and the following theorem. 
\begin{theorem}[F{\" u}redi--Naor--Verstra{\" e}te~{\cite[Theorem~3.2]{Furedi06}}]
    Every $C_6$-free bipartite graph $G$ contains a $\{C_4, C_6\}$-free subgraph $H$ such that for every $v \in V(G)$, 
    \begin{align*}
        d_{H}(v)
        \ge \frac{d_{G}(v)}{2}.
    \end{align*}
\end{theorem}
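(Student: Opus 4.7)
My plan is to reduce the problem to a careful analysis of \emph{dense pairs} in $G$, leveraging the $C_6$-freeness to constrain their structure. Since any subgraph of $G$ is automatically $C_6$-free, it suffices to find a $C_4$-free subgraph $H \subseteq G$ with $d_H(v) \ge d_G(v)/2$ for every $v$. Write $G = G[A, B]$, and call a pair $\{u, u'\} \subseteq A$ (or $\subseteq B$) \emph{dense} with \emph{core} $C_p \coloneqq N_G(u) \cap N_G(u')$ whenever $|C_p| \ge 2$. Every $C_4$ in $G$ is supported on a dense pair on each side, so it suffices to destroy every $C_4$ through each $A$-dense pair.

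The structural heart of the argument is the following disjointness lemma: in a $C_6$-free bipartite graph, for every $v \in A$, distinct dense pairs containing $v$ have pairwise disjoint cores (or, in a degenerate case, identical cores that can be merged into a single ``book''). Indeed, if $b$ lies in the cores of both $\{v, v_1\}$ and $\{v, v_2\}$ with $v_1 \ne v_2$, pick $b_i \in N(v) \cap N(v_i) \setminus \{b\}$: if $b_1 \ne b_2$, then $v_1, b, v_2, b_2, v, b_1$ span a $C_6$, while $b_1 = b_2$ forces a second common core vertex and the argument iterates. In particular $\sum_{p \ni v} |C_p| \le d_G(v)$, and by symmetry the analogous statement holds at every $b \in B$.

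Given the lemma, I would build $H$ as follows: for each $A$-dense pair $p = \{u, u'\}$, choose a single \emph{common vertex} $c_p \in C_p$ to keep as a shared $H$-neighbor of $u$ and $u'$, and for every other $b \in C_p \setminus \{c_p\}$ delete exactly one of the two edges $ub$, $u'b$ (a binary per-element choice). This destroys every $C_4$ through $p$ while removing only $|C_p| - 1$ edges per pair. Splitting these per-element drains roughly evenly between $u$ and $u'$, the disjoint-cores lemma makes the $A$-side loss at $u$ at most $\frac{1}{2}\sum_{p \ni u}|C_p| \le d_G(u)/2$.

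The main obstacle is simultaneously controlling the $B$-side losses: the loss at $b \in B$ equals the number of $A$-dense pairs $p$ with $b \in C_p$ but $c_p \ne b$, which depends globally on the choice of commons. I would address this by invoking the symmetric disjoint-cores lemma at each $b$ — which organizes the $A$-pairs with $b$ in their core into groups indexed by the $B$-dense pairs through $b$ — and then formulating the selection of commons as a hypergraph-matching / transportation problem, showing via Hall's theorem or LP-rounding that an admissible assignment exists. The technical crux will be marrying the $A$-side balance (the even split of drains) with the $B$-side balance (the choice of $c_p$'s); I expect that the double disjoint-cores structure makes the Hall-type conditions automatic, but resolving any residual integrality issue may require a Lov\'asz Local Lemma argument restricted to vertices with no fractional slack.
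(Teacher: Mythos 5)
This theorem is cited from F\"uredi--Naor--Verstra\"ete~\cite{Furedi06} and is used as a black box; the paper contains no proof of it, so there is nothing internal to compare against. Judged on its own merits, your proposal is a promising sketch but leaves two substantive gaps unresolved, both of which you yourself flag.

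First, the ``disjoint cores'' lemma, as you note, only establishes that the cores of two dense pairs through a fixed $v$ are either disjoint or \emph{identical}; in the identical-core (``book'') case the bound $\sum_{p\ni v}|C_p|\le d_G(v)$ fails outright, since $k$ pairs sharing a core $C$ contribute $k|C|$ to that sum. Your per-pair accounting (delete $|C_p|-1$ edges per pair, split the drain evenly) therefore breaks for books such as $K_{t,2}$; the correct unit of accounting has to be the book, not the pair, and the deletion pattern inside a book (so that the surviving subgraph of a $K_{k+1,|C|}$ is $C_4$-free while every vertex retains at least half its degree) is itself a nontrivial balancing problem you do not carry out. Second, and more seriously, the $B$-side loss at a vertex $b$ equals the number of $A$-dense pairs $p$ with $b\in C_p\setminus\{c_p\}$, and the number of $A$-pairs with $b$ in their core can be quadratic in $d_G(b)$; your only handle on it is the choice of commons $c_p$. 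Here you invoke Hall's theorem / LP-rounding / the Local Lemma as possibilities without verifying that any of them yields a feasible assignment, and in fact the structure of this selection problem (the commons must be chosen coherently across all pairs through $b$, which are themselves organized into $B$-side books with their own constraints) is exactly the hard part. Until that assignment problem is solved, the construction does not yield the claimed degree bound on the $B$-side. As a side remark, your deletion rule is also underspecified: an edge $ub$ can be slated for deletion by several pairs $p\ni u$ with $b\in C_p$, and the budget $(|C_p|-1)/2$ per pair does not obviously survive these collisions. The F\"uredi--Naor--Verstra\"ete argument is a short direct construction exploiting the rigid structure of $C_4$'s through a fixed edge in a $C_6$-free graph and does not go through a global matching or rounding step; reading~\cite[Theorem~3.2]{Furedi06} would be the cleanest way to close these gaps.
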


Next, we present the proof of Theorem~\ref{THM:graph-critical-point}~\ref{THM:graph-critical-point-3}. 
The proof is a minor adaption of the Dependent Random Choice (see e.g.~\cite{FS11}).
\begin{proof}[Proof of Theorem~\ref{THM:graph-critical-point}~\ref{THM:graph-critical-point-3}]
Let $F = F[W_1, W_2]$ be a bipartite graph such that $d_{F}(v) \le s$ for every $v\in W_2$. 
Let $t \coloneqq |W_2|$. 
Let $C \coloneqq 2\left(\frac{|V(F)|^{s}}{s!} + |V(F)|\right)$. 
Let $G = G[V_1, V_2]$ be an $n$ by $n$ bipartite graph with  $\norm{G}_s \ge C n^s$.

By symmetry, we may assume that $\sum_{v\in V_1}d_{G}^{s}(v) \ge \frac{\norm{G}_s}{2} \ge \frac{C n^s}{2}$. 
Choose uniformly at random $s$ vertices (with repetations allowed) $v_1, \ldots, v_{s}$ from $V_2$. 
Let $\mathbf{X} \coloneqq N_{G}(v_1) \cap \cdots \cap N_{G}(v_{s}) \subseteq V_1$. 
It is easy to see that 
\begin{align*}
    \mathbb{E}\left[|\mathbf{X}|\right] 
    = \sum_{v\in V_1} \left(\frac{d_{G}(v)}{n}\right)^{s}
    = \frac{\sum_{v\in V_1}d_{G}^{s}(v)}{n^{s}} 
    \ge \frac{C n^{s}/2}{n^{s}}
    = \frac{C}{2}. 
\end{align*}
We call an $s$-set in $\mathbf{X}$ \textbf{bad} if it has  at most $t$ common neighbors. 
Let $\mathbf{Y}$ denote the collection of bad $s$-sets in $\mathbf{X}$. 
Notice that an $s$-set $S$ is contained in $\mathbf{X}$ only if $\{v_1, \ldots, v_{s}\} \subseteq \bigcap_{u\in S}N_{G}(u)$. 
Therefore, 
\begin{align*}
    \mathbb{E}\left[|\mathbf{Y}|\right]
    \le \binom{|V_1|}{s}\left(\frac{t}{n}\right)^{s} 
    \le \frac{t^s}{s!}. 
\end{align*}
It follows that 
\begin{align*}
    \mathbb{E}\left[|\mathbf{X}|-|\mathbf{Y}|\right] 
    \ge \frac{C}{2} - \frac{t^s}{s!} \ge t.
\end{align*}
By deleting one vertex from each bad set, we see that there exists a selection of $s$ vertices $\{v_1, \ldots, v_{s}\} \subseteq V_2$ along with a set $X' \subseteq N_{G}(v_1) \cap \cdots \cap N_{G}(v_{s}) \subseteq V_1$ of size at least $t$ such that every $s$-subset of $X'$ has at least $t$ common neighbors.
It is clear that $F$ can be greedily embedded into $G[X', V_2]$ with $W_1 \subseteq X'$ and $W_2 \subseteq V_2$, a contradiction. 
\end{proof}

%%%%%%%%%%%%%%%%%%%%%%%%%%%%%%%%%%%%%%%%
\section{Concluding remarks}\label{SEC:remarks}
Let $F$ be an $r$-partite $r$-graph satisfying $\mathrm{ex}(n,F) = O(n^{1+\alpha})$. 
Recall from Fact~\ref{FACT:r-gp-p-norm-lower-bound} and Theorem~\ref{THM:main-r-graph} that for every $p > \frac{1}{r-1-\alpha}$, we have 
\begin{align*}
    \left(\tau_{\mathrm{ind}}(F) - 1 + o(1)\right) \binom{n}{r-1}^{p}
    \le \mathrm{ex}_{p}(n,F) 
    \le \left(\tau_{\mathrm{part}}(F) - 1 + o(1)\right) \binom{n}{r-1}^{p}.
\end{align*}
Additionally, recall that this provides an asymptotically tight bound for $\mathrm{ex}_{p}(n,F)$ in the case $r=2$, as $\tau_{\mathrm{ind}}(F) = \tau_{\mathrm{part}}(F)$ for every bipartite graph $F$. Unfortunately, the equality $\tau_{\mathrm{ind}}(F) = \tau_{\mathrm{part}}(F)$ does not necessarily hold for $r \ge 3$, as shown by the following example. 

Let $F$ denote the $3$-graph with vertex set $\{a,b,c,a_1,a_2,a_3,b_1,b_2,b_3,c_1,c_2,c_3 \}$ 
% \begin{align*}
%     \{a,b,c,a_1,a_2,a_3,b_1,b_2,b_3,c_1,c_2,c_3 \}
% \end{align*}
and edge set 
\begin{align*}
    \left\{\{a, b_i, c_j\}\colon  (i,j)\in[3]^2 \right\} 
    \cup \left\{\{a_i, b, c_j\} \colon  (i,j)\in[3]^2 \right\} 
    \cup \left\{\{a_i, b_j, c\} \colon (i,j)\in[3]^2 \right\}.
\end{align*}
It is easy to verify that $\tau_{\mathrm{part}}(F) = 4$ while $\tau_{\mathrm{ind}}(F) = 3$ (with $\{a,b,c\}$ serving as a witness). 

\begin{problem}\label{PROB:r-gp-p-parge}
    Let $r \ge 3$. Suppose that $\mathcal{F}$ is a degenerate family of $r$-graphs satisfying $\mathrm{ex}(n,\mathcal{F}) = O(n^{1+\alpha})$ for some constant $\alpha > 0$. Determine whether $\lim\limits_{n\to \infty} \mathrm{ex}_{p}(n,\mathcal{F})/n^{p(r-1)}$ exists for $p > \frac{1}{r-1-\alpha}$, and, if so, find its value.
\end{problem}

On the other hand, drawing parallels to the Exponent Conjecture of Erd\H{o}s--Siminovits, we propose the following bold conjecture for hypergraphs, which, if true, would show that Theorem~\ref{THM:main-r-graph} is tight in the exponent for the case $p < \frac{1}{r-1-\alpha}$ as well. 
\begin{conjecture}\label{CONJ:exponent-r-gp}
    Let $r \ge 3$. Suppose that $\mathcal{F}$ is a degenerate finite family of $r$-graphs satisfying $\mathrm{ex}(n,\mathcal{F}) = \Omega(n^{1+\alpha})$ for some constant $\alpha > r-2$.
    Then there exist constants $\beta > 0$, $c>0$, and $C>0$ such that for all sufficiently large $n$, 
    \begin{align*}
        c
        \le \frac{\mathrm{ex}(n,\mathcal{F})}{n^{1+\beta}}
        \le C. 
    \end{align*}
\end{conjecture}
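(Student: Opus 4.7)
The statement is Conjecture~\ref{CONJ:exponent-r-gp}, which is a natural hypergraph analogue of the Rational Exponent Conjecture of Erd\H{o}s--Simonovits; that conjecture remains open even for $r=2$, so any proof plan is necessarily speculative. The plan I would pursue is to exploit the $p$-norm machinery of this paper as a ``spectral'' probe of $\mathrm{ex}(n,\mathcal{F})$. By Theorem~\ref{THM:main-r-graph}, the function $\varphi_\mathcal{F}(p) \coloneqq \limsup_{n\to\infty} \log \mathrm{ex}_p(n,\mathcal{F})/\log n$ is bounded above by a piecewise-linear function of $p$ with break point $p_*=1/(r-1-\alpha)$; determining the exponent $\beta$ would amount to determining the precise break point of $\varphi_\mathcal{F}$. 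One could therefore try to show that $\varphi_\mathcal{F}$ is piecewise linear with a single kink, and that near the kink its slope is controlled by the assumption $\mathrm{ex}(n,\mathcal{F}) = \Omega(n^{1+\alpha})$ combined with the lower-bound half of Fact~\ref{FACT:r-gp-p-norm-lower-bound}.

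Concretely, I would first apply the regularization Lemma~\ref{LEMMA:regularization-p-norm} (with $p=1$) to extract from a near-extremal $\mathcal{F}$-free $r$-graph a subgraph $\mathcal{H}$ on $m$ vertices satisfying $|\mathcal{H}| \gtrsim m^{1+\alpha}$ and $\Delta(\mathcal{H}) = O(|\mathcal{H}|/m)$. I would then try to set up a self-similarity argument: given such a nearly-regular $\mathcal{F}$-free subgraph of density exponent $\alpha$, use the $p$-norm phase-transition to identify a subgraph on $m' < m$ vertices with density exponent at least $\alpha - o(1)$, yielding along an infinite subsequence a clean power law. Proposition~\ref{PROP:hypergraph-KST-Zaran-b} and the random sampling in Proposition~\ref{PROP:random-sample} are the natural ingredients for transferring density between scales, and one might hope that the strict inequality $\alpha > r-2$ in the hypothesis provides enough room for the iteration to close.

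The main obstacle is exactly the one that makes the graph case open: a lower bound $\mathrm{ex}(n,\mathcal{F}) \ge c n^{1+\alpha}$ is fully compatible with wildly oscillating behavior such as $n^{1+\alpha}(\log n)^{\pm 1}$ or $n^{1+\alpha+\sin(\log\log n)}$, and ruling these out seems to require an ``almost monotonicity'' property of the Tur\'an exponent that none of our current tools (random sparsification, supersaturation, dependent random choice, the regularization lemma, or the $p$-norm arguments of this paper) provides. For $r\geq 3$ the difficulty is compounded by the scarcity of algebraic extremal constructions: there is no hypergraph analogue of projective planes or norm graphs that would anchor candidate exponents. Consequently, I would regard a complete proof of Conjecture~\ref{CONJ:exponent-r-gp} as requiring either a breakthrough on the graph Rational Exponent Conjecture or the discovery of a genuinely new structural invariant (perhaps an ``entropy'' of the Tur\'an profile $p \mapsto \varphi_\mathcal{F}(p)$) that forces rigidity of the exponent.
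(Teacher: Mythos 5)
This statement is Conjecture~\ref{CONJ:exponent-r-gp}, which the paper explicitly poses as an \emph{open problem}; the paper offers no proof, only a remark citing partial evidence (from~\cite{MYZ18,PZ21}) and counterexamples showing the hypothesis $\alpha>r-2$ is necessary (from~\cite{RS78,FG21}). You correctly recognized this: rather than fabricating a proof, you gave an honest and reasonable sketch of why the $p$-norm machinery in the paper does not settle it, and why the obstruction (compatibility of $\mathrm{ex}(n,\mathcal{F})=\Omega(n^{1+\alpha})$ with sub-polynomial oscillation) is exactly what blocks the Erd\H{o}s--Simonovits Rational Exponent Conjecture already at $r=2$. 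There is nothing in the paper to compare your attempt against, and your assessment of the difficulty is consistent with the paper's framing. One small point worth adding: even the paper's own Theorem~\ref{THM:main-r-graph} assumes an a priori \emph{upper} bound $\mathrm{ex}(n,\mathcal{F})=O(n^{1+\alpha})$, whereas the conjecture only supplies a lower bound, so the regularization Lemma~\ref{LEMMA:regularization-p-norm} cannot even be invoked without first establishing the matching-order upper bound that the conjecture is asking for --- which underscores that the $p$-norm tools here are downstream of, not a route toward, the conjecture.
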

\textbf{Remark.}
Several results such as those in~\cite{MYZ18,PZ21} provide some evidence supporting this conjecture. On the other hand, examples in~\cite{RS78,FG21} show that the requirement  $\alpha > r-2$ cannot be removed in general.

Recall from Theorem~\ref{THM:r-graph-critical-point} that we provided a general upper bound for $\mathrm{ex}_{p}(n,\mathcal{F})$ when $p$ is the threshold. An interesting problem is to explore whether the $\log n$ factor can be removed from this upper bound.
\begin{problem}\label{PROP:r-gp-critical}
    Let $r \ge 2$ be an integer. 
    Suppose that $\mathcal{F}$ is a degenerate family of $r$-graphs satisfying $\mathrm{ex}(n,\mathcal{F}) = O(n^{1+\alpha})$ for some constant $\alpha > 0$. 
    Is it true that  
    \begin{align*}
        \mathrm{ex}_{p_{\ast}}(n,\mathcal{F})
        = O\left(n^{p_{\ast}(r-1)}\right)
        \quad\text{for}\quad p_{\ast} = \frac{1}{r-1-\alpha}~\text{?}
    \end{align*}    
\end{problem}
Given integers $r > t \ge 1$ and a real number $p > 0$, let the \textbf{$(t,p)$-norm} of an $r$-graph $\mathcal{H}$ be defined as
\begin{align*}
    \norm{\mathcal{H}}_{t,p}
    \coloneqq \sum_{T\in \binom{V(\mathcal{H})}{t}} d_{\mathcal{H}}^{p}(T).
\end{align*}
Similarly, for a family $\mathcal{F}$ of $r$-graphs, define the \textbf{$(t,p)$-norm Tur\'{a}n number} of $\mathcal{F}$ as 
\begin{align*}
    \mathrm{ex}_{t,p}(n,\mathcal{F})
    \coloneqq \max\left\{\norm{\mathcal{H}}_{t,p} \colon \text{$v(\mathcal{H}) = n$ and $\mathcal{H}$ is $\mathcal{F}$-free}\right\}.
\end{align*}
The $(t,p)$-norm Tur\'{a}n number $\mathrm{ex}_{t,p}(n,\mathcal{F})$ was systematically studied in~\cite{CIDLLP24} for nondegenerate families $\mathcal{F}$. 
However, many degenerate cases remain unexplored. 
\begin{problem}\label{PROB:tp-norm-r-partite}
    Let $r > t \ge 2$ be integers and $\mathcal{F}$ be a finite family of $r$-partite $r$-graphs such that $\mathrm{ex}(n,\mathcal{F}) = n^{\beta + o(1)}$.
    Determine the exponent of $\mathrm{ex}_{t,p}(n,\mathcal{F})$ for all $p > 1$.
\end{problem}
Given two graphs $Q$ and $G$, we use $N(Q, G)$ to denote the number of copies of $Q$ in $G$.
The \textbf{generalized Tur\'{a}n number} $\mathrm{ex}(n,Q,\mathcal{F})$ is the maximum number of copies of $Q$ in an $n$-vertex $\mathcal{F}$-free graph. 
The generalized Tur\'{a}n problem was first considered by Erd\H{o}s in~\cite{E62}, and was systematically studied by Alon--Shikhelman in~\cite{AS16}. 

Given integers $p \ge r > t \ge 0$, the \textbf{$(r,t)$-book with $p$-pages}, denoted by $B_{t,r,p}$, is the graph constructed as follows$\colon$
\begin{itemize}
    \item Take $p$ sets $V_{1}, \ldots, V_{p}$, each of size $r$, such that there exists a $t$-set $C$ satisfying $V_i \cap V_j = C$ for all $1 \le i < j \le p$. 
    \item Place a copy of $K_{r}$ on each $V_i$. 
\end{itemize}
Observe that $B_{1,2,p}$ is simply a star graph with $p$ edges. 
% For convenience, let $N(B_{t,r,p}, G)$ denote the number of copies of $B_{t,r,p}$ in $G$, and let 
% \begin{align*}
%     \mathrm{ex}(n,B_{t,r,p},F)
%     \coloneqq \max\left\{N(B_{t,r,p}, G) \colon \text{$v(G) = n$ and $G$ is $F$-free}\right\}.
% \end{align*}

In parallel, one could define the \textbf{$(t,r,p)$-norm} of a graph as follows$\colon$ 
Given a graph $G$ and a $t$-set $S \subseteq V(G)$ that induces a copy of $K_t$, let $d_{G,r}(S)$ denote the number of copies of $K_r$ in $G$ that contains $S$. 
Let 
\begin{align*}
    \norm{G}_{t,r,p}
    \coloneqq \sum d_{G,r}^{p}(S),
\end{align*}
where the summation is taken over all $t$-subsets $S \subseteq V(G)$ that induce a copy of $K_t$ in $G$. 
Similarly, let 
\begin{align*}
    \mathrm{ex}_{t,r,p}(n,F)
    \coloneqq \max\left\{\norm{G}_{t,r,p} \colon \text{$v(G) = n$ and $G$ is $F$-free}\right\}.
\end{align*}
One could consider extending results in this paper to the function $\mathrm{ex}_{t,r,p}(n,F)$. This will provide an upper bound for the generalized Tur\'{a}n number $\mathrm{ex}(n,B_{t,r,p},F)$, since for every graph $G$, 
\begin{align*}
    N(B_{t,r,p}, G)
    = \sum \binom{d_{G,r}(S)}{p}
    \le \frac{1}{p!} \sum d_{G,r}^{p}(S)
    = \frac{\norm{G}_{t,r,p}}{p!},
\end{align*}
where the summation is taken over all $t$-subsets $S \subseteq V(G)$ that induce a copy of $K_t$ in $G$.

% An $r$-graph $\mathcal{H}$ is \textbf{bipartite} if there exists a bipartition $V_1 \cup V_2 = V(\mathcal{H})$ such that every edge in $\mathcal{H}$ has nonempty intersection with both $V_1$ and $V_2$. 
%We use $\mathcal{H}[V_1, V_2]$ to emphaise that $\mathcal{H}$ is bipartite with parts $V_1$ and $V_2$. 
For a bipartite graph $G[V_1, V_2]$ with parts $V_1$ and $V_2$, define
\begin{align*}
    \norm{G}_{p,\mathrm{left}}
    \coloneqq \sum_{v\in V_1}d_{G}^{p}(v)
    \quad\text{and}\quad 
    \norm{G}_{p,\mathrm{right}}
    \coloneqq \sum_{v\in V_2}d_{G}^{p}(v).
\end{align*}
Note that $\norm{G}_{1,\mathrm{left}} = \norm{G}_{1,\mathrm{right}} = |G|$ and $\norm{G}_{p} = \norm{G}_{p,\mathrm{left}} + \norm{G}_{p,\mathrm{right}}$ for every $p \ge 1$.

An important variation of the Tur\'{a}n problem is the Zarankiewicz problem. 
Given bipartite graphs $F$ and $G$ with fixed bipartitions $V(F) = W_1\cup W_2$ and $V(G) = V_1 \cup V_2$, an \textbf{ordered copy} of $F[W_1, W_2]$ in  $G[V_1, V_2]$ is a copy of $F$ where $W_1$ is contained in $V_1$ and $W_2$ is contained in $V_2$. 
Given integers $m,n \ge 1$, the \textbf{Zarankiewicz number} $Z(m,n,F[W_1, W_2])$ is the maximum number of edges in a bipartite graph $G = G[V_1, V_2]$ with $|V_1| = m$ and $|V_2| = n$ that does not contain an ordered copy of $F[W_1, W_2]$. 

Extending the Zarankiewicz number to the $p$-norm, for every $p \ge 1$, let $Z_{p,\mathrm{left}}(m,n,F[W_1, W_2])$ (resp. $Z_{p,\mathrm{right}}(m,n,F[W_1, W_2])$) denote the maximum value of $\norm{G}_{p,\mathrm{left}}$ (resp. $\norm{G}_{p,\mathrm{right}}$) over all bipartite graphs $G = G[V_1, V_2]$ with $|V_1| = m$ and $|V_2| = n$ that do not contain an ordered copy of $F[W_1, W_2]$.
When the order $[W_1, W_2]$ is clear from the context, for simplicity, we will use $Z(m, n, F)$, $Z_{p,\mathrm{left}}(m, n, F)$, and $Z_{p,\mathrm{right}}(m, n, F)$ to represent $Z(m, n, F[W_1, W_2])$, $Z_{p,\mathrm{left}}(m, n, F[W_1, W_2])$, and $Z_{p,\mathrm{right}}(m, n, F[W_1, W_2])$ respectively.

The following theorem can be derived through relatively straightforward modifications to the proofs presented in this paper, so we omit the details. 
\begin{theorem}\label{THM:Zaran-threshold}
     Suppose that $F = F[W_1,W_2]$ is a bipartite graph such that
     %\xl{seems need to assume this is true for every $n,m$ instead of $n \ge m$} 
     $Z(m,n,F) = O(m^{\alpha} n^{\beta} + n + m)$ for some constants $\alpha, \beta \in (0,1)$ and every $n,m \ge 1$. 
     Then there exists a constant $C_{\mathcal{F}} = C_{\mathcal{F}}(p) > 0$ such that 
     \begin{align*}
         Z_{p,\mathrm{left}}(m,n,F)
         \le  
         \begin{cases}
             C_{\mathcal{F}}\left(m^{1-p(1-\alpha)} n^{\beta p} + (m+n^{p})\log^{\frac{p_{\ast}-1}{\delta}+1} n  \right), & \quad\text{if}\quad p \in \left[1, \frac{1}{2-\alpha-\beta}\right), \\
             C_{\mathcal{F}}\left(m^{1-p(1-\alpha)} n^{p \beta} + m + n^{p}\right) \log n, & \quad\text{if}\quad p = \frac{1}{2-\alpha-\beta}, \\
             \left(\tau_{\mathrm{ind}}(\mathcal{F})-1\right) n^{p} + o_{n}(n^{p}) + o_{m}(m^{p}), & \quad\text{if}\quad p > \frac{1}{2-\alpha-\beta}. 
         \end{cases}
     \end{align*}
     and 
     \begin{align*}
         Z_{p,\mathrm{right}}(m,n,F)
         \le  
         \begin{cases}
             C_{\mathcal{F}}\left(m^{\alpha p} n^{1 - p(1-\beta)} + (m^p + n) \log^{\frac{p_{\ast}-1}{\delta}+1} m \right), & \quad\text{if}\quad p \in \left[1, \frac{1}{2-\alpha-\beta}\right), \\
             C_{\mathcal{F}}\left(m^{p\alpha} n^{1-p(1-\beta)} + m^{p} + n\right) \log m, & \quad\text{if}\quad p = \frac{1}{2-\alpha-\beta}, \\
             \left(\tau_{\mathrm{ind}}(\mathcal{F})-1\right) m^{p} + o_{n}(n^{p}) + o_{m}(m^{p}), & \quad\text{if}\quad p > \frac{1}{2-\alpha-\beta}. 
         \end{cases}
     \end{align*}
     Here, $p_{\ast} \coloneqq \frac{1}{2-\alpha-\beta}$ and $\delta \coloneqq \frac{1-p(2-\alpha - \beta)}{2}$.
\end{theorem}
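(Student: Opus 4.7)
The plan is to transport the three-regime strategy used for Theorems~\ref{THM:main-r-graph} and~\ref{THM:r-graph-critical-point} (Sections~\ref{SEC:proof-r-gp} and~\ref{SEC:proof-r-gh-critical}) to the bipartite ordered setting, tracking left- and right-degree contributions separately. I would prove only the bound for $Z_{p,\mathrm{left}}$; the bound for $Z_{p,\mathrm{right}}$ follows by swapping the roles of $V_1$ and $V_2$ (and of $\alpha$ and $\beta$). Throughout, I would replace Proposition~\ref{PROP:hypergraph-KST-Zaran-b} by the hypothesis $Z(m,n,F) = O(m^\alpha n^\beta + m + n)$ itself.

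For the large-$p$ regime $p > p_\ast$, I would mimic Proposition~\ref{PROP:r-graph-large-p}. Pick $F \in \mathcal{F}$ with $\tau_{\mathrm{ind}}(F) = \tau_{\mathrm{ind}}(\mathcal{F})$ and set $s_1 := \tau_{\mathrm{ind}}(F)$. Split $V_1$ into $U := \{v \in V_1 : d_G(v) \ge n^{1-\delta_1}\}$ and $V_1' := V_1 \setminus U$, using the Zarankiewicz bound to force $|U| \le m^{\delta_2}$ for suitably small $\delta_i$. The edges touching $U$ are controlled by the ordered Zarankiewicz bound applied to the semibipartite graph on $U \cup V_2$: its contribution to $\sum_{v \in U} d_G^p(v)$ is at most $(s_1 - 1 + \varepsilon) n^p$ (the $s_1 - 1$ coming exactly from a star-like cover on the $V_2$-side). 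For $V_1'$, bootstrap by applying Fact~\ref{FACT:p-q-norm-upper} to pass from $p$ to some $\hat p \in (1, p_\ast)$, with $n^{1-\delta_1}$ as the effective $\Delta$-bound; the small-$p$ bound then shows $\norm{G[V_1',V_2]}_{p,\mathrm{left}} = o(n^p)$.

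For the small-$p$ regime $p \in [1, p_\ast)$, I would bipartite-adapt Lemma~\ref{LEMMA:regularization-p-norm}. The target is: any $G[V_1,V_2]$ with $\norm{G}_{p,\mathrm{left}}$ exceeding $C_{\mathcal F}(m^{1-p(1-\alpha)} n^{p\beta})$ contains a subgraph $H = H[V_1', V_2']$ with $\norm{H}_{p,\mathrm{left}} \ge (1-\varepsilon) C_{\mathcal F} (m')^{1-p(1-\alpha)}(n')^{p\beta}$ and $|H| > \hat C (m')^\alpha (n')^\beta$, contradicting the Zarankiewicz bound for $F$. The regularization now has to peel heavy vertices from \emph{both} sides, alternating between $V_1$ and $V_2$; each round replaces the current pair $(m_i, n_i)$ by a pair where one coordinate shrinks by a factor $K$, and $\Phi(G_i) = \norm{G_i}_{p,\mathrm{left}}/(m_i^{1-p(1-\alpha)} n_i^{p\beta})$ grows geometrically. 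The additive $(m + n^p)\log^{(p_\ast-1)/\delta + 1} n$ term absorbs the trivial degenerate regimes (e.g.\ when $m$ or $n$ is so small that the pseudorandom bound collapses), and the logarithmic exponent $(p_\ast -1)/\delta + 1$ reflects the iteration depth before convexity closes the induction.

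For the threshold case $p = p_\ast$, I would combine the bipartite version of Proposition~\ref{PROP:divide-log-bound} with the Zarankiewicz bound directly: bucket $V_1$ dyadically by degree, extract a set $U \subseteq V_1$ of vertices with comparable degrees carrying at least a $1/\log n$ fraction of $\norm{G}_{p,\mathrm{left}}$, and apply $|G[U,V_2]| = O(|U|^\alpha n^\beta + |U| + n)$. Solving for the maximum of $\norm{G}_{p,\mathrm{left}}$ yields the stated bound with exactly one extra $\log n$ factor. The main technical obstacle is the asymmetry of the bipartite regularization: the correct exponents on the additive error term, and in particular the log exponent $(p_\ast-1)/\delta+1$ in the small-$p$ bound, require careful bookkeeping of how many regularization rounds occur on each side and how Fact~\ref{FACT:p-q-norm-upper} interacts with the two-sided degree thresholds. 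Every other step is a direct transcription of the hypergraph arguments already in the paper.
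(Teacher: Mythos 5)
Your overall three-regime decomposition (general $\log$-loss bound via dyadic bucketing, a large-$p$ argument patterned on Proposition~\ref{PROP:r-graph-large-p}, a small-$p$ regularization contradiction against the Zarankiewicz hypothesis) matches the structure of the paper's appendix sketch, and your descriptions of the large-$p$ and threshold-$p$ regimes are essentially Propositions~\ref{PROP:Zaran-p-large} and~\ref{PROP:Zaran-general}. The place you diverge, and the place I'd push back, is the small-$p$ regularization. The paper's Proposition~\ref{PROP:Zaran-p-small} does \emph{not} alternate one-sided shrinks by a factor $K$: it shrinks both parts simultaneously by a factor $2$ at every step, so $|A_i| = m/2^i$ and $|B_i| = n/2^i$ stay synchronized, and heavy vertices are detected and kept only on the \emph{left} side $A_{i-1}$ (the side whose $p$-norm is being maximized), with the right side shrunk by sampling. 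Your proposal to peel heavy vertices from both sides and shrink only one coordinate per round is a genuinely different scheme, and it is not clear that it preserves the geometric growth of the potential $\Phi(G_i) = \norm{G_i}_{p,\mathrm{left}} / (m_i^{1-p(1-\alpha)} n_i^{p\beta})$ --- a high-degree vertex in $V_2$ does not obstruct $\norm{\cdot}_{p,\mathrm{left}}$ the way a high-degree vertex in $V_1$ does, so peeling the right side is not obviously the right move. Relatedly, your account of the exponent $\frac{p_\ast-1}{\delta}+1$ as ``iteration depth before convexity closes the induction'' is vague in a way that hides the actual mechanism: the paper sets up an explicit dichotomy (eq.~\eqref{equ:Zaran-assum-m-n}) between the case $m^{1-p(1-\alpha)}n^{p\beta} \ge (m+n^p)\log^{(p_\ast-1)/\delta}n$, where the synchronized regularization terminates within $O((1/\delta)\log\log n)$ rounds and yields a \emph{log-free} main-term bound, and the complementary case, where one falls back on Proposition~\ref{PROP:Zaran-general} and absorbs its extra $\log n$ into the additive term. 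The threshold power $(p_\ast-1)/\delta$ is chosen so that the $2^{k(p_\ast-1)}$ factor accumulated over $k \le k_\ast$ rounds stays below $\log^{(p_\ast-1)/\delta}n$; it is not literally an iteration count. I'd encourage you to re-derive the small-$p$ case with the synchronized shrinking and the explicit dichotomy; the alternating scheme as stated would need substantial additional work to close, and I do not see how it produces the stated log-free main term.
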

\textbf{Remark.}
By summing $Z_{p,\mathrm{left}}(m,n,F)$ and $Z_{p,\mathrm{right}}(m,n,F)$, we obtain an upper bound for $\mathrm{ex}_{p}(m,n,F)$ and the two-sided $Z_{p}(m,n,F[W_1, W_2])$. Here, $Z_{p}(m,n,F[W_1, W_2])$ represents the maximum $p$-norm of a bipartite graph $G = G[V_1, V_2]$ with $|V_1| = m$ and $|V_2| = n$ that does not contain an ordered copy of $F[W_1, W_2]$. 

%%%%%%%%%%%%%%%%%%%%%%%%%%%%%%%%%%%%%%%%%%%%%%
\section*{Acknowledgements}
We would like to thank D\'{a}niel Gerbner for informing us about~\cite{FK06}.
%%%%%%%%%%%%%%%%%%%%%%%%%%%%%%%%%%%%%%%%%%%%%%
\bibliographystyle{alpha}%abbrv
\bibliography{LpNorm}

\newcommand{\etalchar}[1]{$^{#1}$}
\begin{thebibliography}{HHL{\etalchar{+}}23}

\bibitem[AHL02]{AHL02}
Noga Alon, Shlomo Hoory, and Nathan Linial.
\newblock The {M}oore bound for irregular graphs.
\newblock {\em Graphs Combin.}, 18(1):53--57, 2002.

\bibitem[AKS03]{AKS03}
Noga Alon, Michael Krivelevich, and Benny Sudakov.
\newblock Tur{\' a}n numbers of bipartite graphs and related {R}amsey-type questions.
\newblock volume~12, pages 477--494. 2003.
\newblock Special issue on Ramsey theory.

\bibitem[ARS99]{ARS99}
Noga Alon, Lajos R\'{o}nyai, and Tibor Szab\'{o}.
\newblock Norm-graphs: variations and applications.
\newblock {\em J. Combin. Theory Ser. B}, 76(2):280--290, 1999.

\bibitem[AS16]{AS16}
Noga Alon and Clara Shikhelman.
\newblock Many {$T$} copies in {$H$}-free graphs.
\newblock {\em J. Combin. Theory Ser. B}, 121:146--172, 2016.

\bibitem[BCL22a]{BCL22b}
J\'{o}zsef Balogh, Felix~Christian Clemen, and Bernard Lidick\'{y}.
\newblock Hypergraph {T}ur\'{a}n problems in {$\ell_2$}-norm.
\newblock In {\em Surveys in combinatorics 2022}, volume 481 of {\em London Math. Soc. Lecture Note Ser.}, pages 21--63. Cambridge Univ. Press, Cambridge, 2022.

\bibitem[BCL22b]{BCL22}
J\'{o}zsef Balogh, Felix~Christian Clemen, and Bernard Lidick\'{y}.
\newblock Solving {T}ur\'{a}n's tetrahedron problem for the {$\ell_2$}-norm.
\newblock {\em J. Lond. Math. Soc. (2)}, 106(1):60--84, 2022.

\bibitem[Ben66]{Ben66}
Clark~T. Benson.
\newblock Minimal regular graphs of girths eight and twelve.
\newblock {\em Canadian J. Math.}, 18:1091--1094, 1966.

\bibitem[BN12]{BN12}
B{\' e}la Bollob{\' a}s and Vladimir Nikiforov.
\newblock Degree powers in graphs: the {E}rd{\H o}s-{S}tone theorem.
\newblock {\em Combin. Probab. Comput.}, 21(1-2):89--105, 2012.

\bibitem[BS74]{BS74}
J.~A. Bondy and M.~Simonovits.
\newblock Cycles of even length in graphs.
\newblock {\em J. Combin. Theory Ser. B}, 16:97--105, 1974.

\bibitem[Buk24]{Bukh21}
Boris Bukh.
\newblock Extremal graphs without exponentially small bicliques.
\newblock {\em Duke Math. J.}, 173(11):2039--2062, 2024.

\bibitem[CIL{\etalchar{+}}24]{CIDLLP24}
Wanfang Chen, Daniel I{\v{l}}kovi{\v{c}}, Jared Le{\'o}n, Xizhi Liu, and Oleg Pikhurko.
\newblock Nondegenerate {T}ur{\' a}n problems under $(t, p)$-norms.
\newblock {\em arXiv preprint arXiv:2406.15934}, 2024.

\bibitem[CY00]{CY00}
Yair Caro and Raphael Yuster.
\newblock A {T}ur{\'a}n type problem concerning the powers of the degrees of a graph.
\newblock {\em Electron. J. Combin.}, 7:Research Paper 47, 14, 2000.

\bibitem[CY04]{CY00arxiv}
Yair Caro and Raphael Yuster.
\newblock A {T}ur{\' a}n type problem concerning the powers of the degrees of a graph (revised).
\newblock {\em arXiv preprint math/0401398}, 2004.

\bibitem[Erd62]{E62}
P.~Erd\H{o}s.
\newblock On the number of complete subgraphs contained in certain graphs.
\newblock {\em Magyar Tud. Akad. Mat. Kutat\'{o} Int. K\"{o}zl.}, 7:459--464, 1962.

\bibitem[Erd64a]{E64}
P.~Erd\H{o}s.
\newblock Extremal problems in graph theory.
\newblock In {\em Theory of {G}raphs and its {A}pplications ({P}roc. {S}ympos. {S}molenice, 1963)}, pages 29--36. Publ. House Czech. Acad. Sci., Prague, 1964.

\bibitem[Erd64b]{Erdos64}
P.~Erd\H{o}s.
\newblock On extremal problems of graphs and generalized graphs.
\newblock {\em Israel J. Math.}, 2:183--190, 1964.

\bibitem[Erd70]{Erdos70}
P\'al Erd{\H o}s.
\newblock On the graph theorem of {T}ur{\'a}n.
\newblock {\em Mat. Lapok}, 21:249--251, 1970.

\bibitem[ERS66]{ERS66}
P.~Erd{\H o}s, A.~R\'{e}nyi, and V.~T. S\'{o}s.
\newblock On a problem of graph theory.
\newblock {\em Studia Sci. Math. Hungar.}, 1:215--235, 1966.

\bibitem[ES82]{Erdos82}
P.~Erd{\H o}s and M.~Simonovits.
\newblock Compactness results in extremal graph theory.
\newblock {\em Combinatorica}, 2(3):275--288, 1982.

\bibitem[FG21]{FG21}
Zolt{\'a}n F{\"u}redi and D{\'a}niel Gerbner.
\newblock Hypergraphs without exponents.
\newblock {\em J. Combin. Theory Ser. A}, 184:Paper No. 105517, 9, 2021.

\bibitem[FK06]{FK06}
Zolt{\'a}n F{\"u}redi and Andr{\'e} K{\"u}ndgen.
\newblock Moments of graphs in monotone families.
\newblock {\em J. Graph Theory}, 51(1):37--48, 2006.

\bibitem[FNV06]{Furedi06}
Zoltan F{\" u}redi, Assaf Naor, and Jacques Verstra{\" e}te.
\newblock On the {T}ur{\' a}n number for the hexagon.
\newblock {\em Adv. Math.}, 203(2):476--496, 2006.

\bibitem[FS11]{FS11}
Jacob Fox and Benny Sudakov.
\newblock Dependent random choice.
\newblock {\em Random Structures Algorithms}, 38(1-2):68--99, 2011.

\bibitem[FS13]{FS13}
Zolt\'{a}n F\"{u}redi and Mikl\'{o}s Simonovits.
\newblock The history of degenerate (bipartite) extremal graph problems.
\newblock In {\em Erd\"{o}s centennial}, volume~25 of {\em Bolyai Soc. Math. Stud.}, pages 169--264. J\'{a}nos Bolyai Math. Soc., Budapest, 2013.

\bibitem[FT16]{FT16}
Peter Frankl and Norihide Tokushige.
\newblock Invitation to intersection problems for finite sets.
\newblock {\em J. Combin. Theory Ser. A}, 144:157--211, 2016.

\bibitem[F{\" u}r91]{Furedi91}
Zolt{\' a}n F{\" u}redi.
\newblock On a {T}ur{\' a}n type problem of {E}rd{\H o}s.
\newblock {\em Combinatorica}, 11(1):75--79, 1991.

\bibitem[Ger24]{Ger24}
D{\'a}niel Gerbner.
\newblock On degree powers and counting stars in {$F$}-free graphs.
\newblock {\em arXiv preprint arXiv:2401.04894}, 2024.

\bibitem[HHL{\etalchar{+}}23]{HHLLYZ23}
Jianfeng Hou, Caiyun Hu, Heng Li, Xizhi Liu, Caihong Yang, and Yixiao Zhang.
\newblock Toward a density {C}orr{\' a}di--{H}ajnal theorem for degenerate hypergraphs.
\newblock {\em arXiv preprint arXiv:2311.15172}, 2023.

\bibitem[KRS96]{KRS96}
J\'{a}nos Koll\'{a}r, Lajos R\'{o}nyai, and Tibor Szab\'{o}.
\newblock Norm-graphs and bipartite {T}ur\'{a}n numbers.
\newblock {\em Combinatorica}, 16(3):399--406, 1996.

\bibitem[KST54]{KST54}
T.~K\"{o}vari, V.~T. S\'{o}s, and P.~Tur\'{a}n.
\newblock On a problem of {K}. {Z}arankiewicz.
\newblock {\em Colloq. Math.}, 3:50--57, 1954.

\bibitem[LLQS19]{LLQS19}
Yongxin Lan, Henry Liu, Zhongmei Qin, and Yongtang Shi.
\newblock Degree powers in graphs with a forbidden forest.
\newblock {\em Discrete Math.}, 342(3):821--835, 2019.

\bibitem[LU93]{LU93}
Felix Lazebnik and Vasiliy~A. Ustimenko.
\newblock New examples of graphs without small cycles and of large size.
\newblock volume~14, pages 445--460. 1993.
\newblock Algebraic combinatorics (Vladimir, 1991).

\bibitem[LUW99]{LUW99}
Felix Lazebnik, Vasiliy~A. Ustimenko, and Andrew~J. Woldar.
\newblock Polarities and {$2k$}-cycle-free graphs.
\newblock volume 197/198, pages 503--513. 1999.
\newblock 16th British Combinatorial Conference (London, 1997).

\bibitem[LV05]{LV05girth}
Thomas Lam and Jacques Verstra{\"e}te.
\newblock A note on graphs without short even cycles.
\newblock {\em Electron. J. Combin.}, 12:Note 5, 6, 2005.

\bibitem[MYZ18]{MYZ18}
Jie Ma, Xiaofan Yuan, and Mingwei Zhang.
\newblock Some extremal results on complete degenerate hypergraphs.
\newblock {\em J. Combin. Theory Ser. A}, 154:598--609, 2018.

\bibitem[Nik09]{Nik09}
Vladimir Nikiforov.
\newblock Degree powers in graphs with a forbidden even cycle.
\newblock {\em Electron. J. Combin.}, 16(1):Research Paper 107, 9, 2009.

\bibitem[NV05]{Naor05}
Assaf Naor and Jacques Verstra{\" e}te.
\newblock A note on bipartite graphs without {$2k$}-cycles.
\newblock {\em Combin. Probab. Comput.}, 14(5-6):845--849, 2005.

\bibitem[PZ]{PZ21}
Cosmin Pohoata and Dmitriy Zakharov.
\newblock Norm hypergraphs.
\newblock {\em Combinatorica}.
\newblock to appear.

\bibitem[RS78]{RS78}
I.~Z. Ruzsa and E.~Szemer{\'e}di.
\newblock Triple systems with no six points carrying three triangles.
\newblock In {\em Combinatorics ({P}roc. {F}ifth {H}ungarian {C}olloq., {K}eszthely, 1976), {V}ol. {II}}, volume~18 of {\em Colloq. Math. Soc. J\'anos Bolyai}, pages 939--945. North-Holland, Amsterdam-New York, 1978.

\bibitem[Sa{\u{g}}18]{Saglam2018}
Mert Sa{\u{g}}lam.
\newblock Near log-convexity of measured heat in (discrete) time and consequences.
\newblock In {\em 59th {A}nnual {IEEE} {S}ymposium on {F}oundations of {C}omputer {S}cience---{FOCS} 2018}, pages 967--978. IEEE Computer Soc., Los Alamitos, CA, 2018.

\bibitem[Tur41]{Tur41}
Paul Tur\'an.
\newblock Eine {E}xtremalaufgabe aus der {G}raphentheorie.
\newblock {\em Mat. Fiz. Lapok}, 48:436--452, 1941.

\bibitem[Wen91]{Wen91}
R.~Wenger.
\newblock Extremal graphs with no {$C^4$}'s, {$C^6$}'s, or {$C^{10}$}'s.
\newblock {\em J. Combin. Theory Ser. B}, 52(1):113--116, 1991.

\bibitem[Zha22]{Zha22}
Liwen Zhang.
\newblock Degree powers in {$K_{s,t}$}-minor free graphs.
\newblock {\em Discrete Math.}, 345(4):Paper No. 112783, 9, 2022.

\end{thebibliography}
%%%%%%%%%%%%%%%%%%%%%%%%%%%%%%%%%%%%%%%%%%%%%%
\begin{appendix}
\section{Sketch of the Proof of Theorem~\ref{THM:Zaran-threshold}}
In this section, we provide a sketch of the proof for Theorem~\ref{THM:Zaran-threshold}. 
\begin{proposition}\label{PROP:Zaran-general}
    For every $p \ge 1$, we have 
    \begin{align*}
        \norm{G}_{p, \mathrm{left}}
        = O\left(m^{1 - p(1-\alpha)} n^{p \beta} \log n + (m+n^{p}) \log n \right).
    \end{align*}
\end{proposition}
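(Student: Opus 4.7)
The plan is to combine a dyadic decomposition of $V_1$ by degree with the assumed Zarankiewicz bound applied to each degree slice.

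First, set $t \coloneqq \lceil \log n \rceil + 1$ and partition $V_1 = \bigcup_{i \in [t]} U_i$, where
$$ U_i \coloneqq \{v \in V_1 : 2^{i-1} \le d_G(v) < 2^{i}\}. $$
Then $\norm{G}_{p,\mathrm{left}} \le \sum_{i \in [t]} |U_i| \cdot 2^{ip}$, so it suffices to control each summand. Since the induced bipartite subgraph $G[U_i, V_2]$ contains no ordered copy of $F[W_1, W_2]$, the hypothesis gives $|G[U_i, V_2]| \le Z(|U_i|, n, F) \le C(|U_i|^{\alpha} n^{\beta} + |U_i| + n)$ for some constant $C = C(F)$. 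Combined with $|G[U_i, V_2]| = \sum_{v \in U_i} d_G(v) \ge |U_i| \cdot 2^{i-1}$, this yields the key structural inequality
$$ |U_i| \cdot 2^{i-1} \le C\bigl(|U_i|^{\alpha} n^{\beta} + |U_i| + n\bigr). $$

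Next, I would split the range of $i$ according to which term on the right dominates. When the $C|U_i|$ term dominates we have $2^{i-1} \le 3C$, forcing $i = O(1)$; this bounded range contributes $O(m)$ using the trivial bound $|U_i| \le m$. For the remaining $i$, absorbing $C|U_i|$ into the left side gives $|U_i| \cdot 2^{i-2} \le C(|U_i|^{\alpha} n^{\beta} + n)$, and in each of the two subcases we conclude
$$ |U_i| \le O\bigl(n^{\beta/(1-\alpha)} \cdot 2^{-i/(1-\alpha)}\bigr) + O\bigl(n \cdot 2^{-i}\bigr). $$
Using also $|U_i| \le m$ and multiplying by $2^{ip}$, we obtain
$$ |U_i| \cdot 2^{ip} \le O\Bigl(\min\bigl\{m \cdot 2^{ip},\ n^{\beta/(1-\alpha)} \cdot 2^{i(p - 1/(1-\alpha))}\bigr\} + n \cdot 2^{i(p-1)}\Bigr). $$

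Finally, I would sum over $i \in [t]$. The geometric series $\sum_{i \le t} n \cdot 2^{i(p-1)}$ contributes $O(n^{p} \log n)$ for $p \ge 1$. For the $\min\{\cdot,\cdot\}$ piece, the crossover index $i^{*}$ satisfies $2^{i^{*}} \asymp n^{\beta}/m^{1-\alpha}$, and at $i = i^{*}$ both arguments equal $\Theta(m^{1-p(1-\alpha)} n^{p\beta})$. Using the first bound for $i \le i^{*}$ and the second bound for $i > i^{*}$, this piece contributes $O\bigl((\log n) \cdot m^{1-p(1-\alpha)} n^{p\beta}\bigr)$ in each of the three subregimes $p < 1/(1-\alpha)$, $p = 1/(1-\alpha)$, and $p > 1/(1-\alpha)$; the relevant geometric series is dominated either by its endpoint at $i = i^{*}$ or by the largest index $i = t$, in which case it is already $O(n^{p})$. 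Any boundary correction when $i^{*} \notin [1,t]$ is absorbed into the additive $(m+n^{p})\log n$ term, yielding the desired bound. The main obstacle is the uniform treatment of the three regimes of $p$ relative to $1/(1-\alpha)$, especially at the threshold where neither branch of the $\min$ strictly dominates and a logarithmic factor genuinely appears; handling this transparently without splitting into multiple cases is where the bookkeeping has to be done carefully.
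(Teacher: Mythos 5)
Your proof is correct, and it uses the same core machinery as the paper: a dyadic decomposition of $V_1$ by degree, with the assumed Zarankiewicz bound applied to each degree slice. The paper, however, packages this differently through Proposition~\ref{PROP:divide-log-bound}: instead of summing $|U_i|\,2^{ip}$ over all $t=\Theta(\log n)$ buckets and then analysing a geometric series with a $p$-dependent crossover index $i^{*}$, it applies the pigeonhole principle to select a single bucket $U_{i^{*}}$ carrying at least a $1/t$ fraction of $\norm{G}_{p,\mathrm{left}}$, which yields $|G[U_{i^{*}},V_2]|\ge \tfrac14\,|U_{i^{*}}|^{1-1/p}\bigl(\norm{G}_{p,\mathrm{left}}/\log n\bigr)^{1/p}$. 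Feeding that one inequality into $Z(|U_{i^{*}}|,n,F)=O\bigl(|U_{i^{*}}|^{\alpha}n^{\beta}+|U_{i^{*}}|+n\bigr)$ and then using only the crude bounds $1\le |U_{i^{*}}|\le m$ and $\beta<1$ to dominate each of $|U_{i^{*}}|^{1-p(1-\alpha)}n^{p\beta}$, $|U_{i^{*}}|$, and $n^{p}/|U_{i^{*}}|^{p-1}$ by the claimed right-hand side finishes the proof with no case split on $p$ versus $1/(1-\alpha)$. The regime bookkeeping you flag as the main obstacle is therefore avoidable: the $\log n$ is paid once at the pigeonhole step rather than accumulated across the sum.
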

\begin{proof}[Proof of Proposition~\ref{PROP:Zaran-general}]
    This is a direct consequence of Proposition~\ref{PROP:divide-log-bound}. 
\end{proof}
\begin{proposition}\label{PROP:Zaran-p-large}
    For every $p > \frac{1}{2-\alpha - \beta}$, we have 
    \begin{align*}
        \norm{G}_{p, \mathrm{left}}
        = \left(\tau_{\mathrm{ind}}(\mathcal{F})-1\right) n^{p} + o_{n}(n^{p}) + o_{m}(m^{p}).
    \end{align*}
\end{proposition}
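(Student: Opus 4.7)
My plan is to mimic the proof of Proposition~\ref{PROP:r-graph-large-p}, adapted to the asymmetric bipartite Zarankiewicz setting. Fix $F = F[W_1, W_2] \in \mathcal{F}$ attaining $\tau \coloneqq \tau_{\mathrm{ind}}(\mathcal{F})$ with $|W_1| = \tau$, set $t \coloneqq |W_2|$, and note that $F \subseteq K_{\tau, t}$. Let $G = G[V_1, V_2]$ be a bipartite graph with $|V_1| = m$, $|V_2| = n$ that contains no ordered copy of $F[W_1, W_2]$, and fix $\varepsilon > 0$. Set $p_{\ast} \coloneqq \frac{1}{2-\alpha-\beta}$, and choose constants $0 < \delta_1 \ll \delta_2 < 1/\tau$ depending on $\varepsilon, p, \alpha, \beta$.

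First I partition $V_1 = U \cup V_1'$ with $U \coloneqq \{v \in V_1 \colon d_G(v) \ge n^{1-\delta_1}\}$. The hypothesis $Z(|U|, n, F) = O(|U|^\alpha n^\beta + |U| + n)$ together with the trivial lower bound $|G[U, V_2]| \ge |U| n^{1-\delta_1}$ forces $|U| \le n^{\delta_2}$ for large $n$. Since $G[U, V_2]$ is $K_{\tau, t}$-free, the K{\H o}v\'{a}ri--S\'{o}s--Tur\'{a}n theorem (the $r = 2$ case of Proposition~\ref{PROP:hypergraph-KST-Zaran}) gives
\begin{align*}
    |G[U, V_2]|
    \le (t-1)^{1/\tau} \, |U| \, n^{1 - 1/\tau} + (\tau - 1) n
    \le \left(\tau - 1 + \frac{\varepsilon}{2}\right) n,
\end{align*}
using $\delta_2 < 1/\tau$ to absorb the first summand. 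Combining with $d_G(v) \le n$ for $v \in U$ yields the main term
\begin{align*}
    \sum_{v \in U} d_G^p(v) \le n^{p-1} \cdot |G[U, V_2]| \le \left(\tau - 1 + \frac{\varepsilon}{2}\right) n^p.
\end{align*}

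To bound the contribution from $V_1'$, I exploit $\Delta(G[V_1', V_2]) \le n^{1-\delta_1}$. By Fact~\ref{FACT:p-q-norm-upper} applied with exponent $\hat p \coloneqq p_{\ast}$,
\begin{align*}
    \sum_{v \in V_1'} d_G^p(v)
    \le n^{(1-\delta_1)(p - p_{\ast})} \, Z_{p_{\ast}, \mathrm{left}}(m, n, F)
    \le n^{(1-\delta_1)(p - p_{\ast})} \cdot C \bigl(m^{1 - p_{\ast}(1-\alpha)} n^{p_{\ast} \beta} + (m + n^{p_{\ast}})\bigr) \log n,
\end{align*}
where the second inequality is the borderline estimate from Proposition~\ref{PROP:Zaran-general}. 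The threshold identity $p_{\ast}(1-\alpha) + p_{\ast}(1-\beta) = 1$ forces each resulting monomial $n^a m^b \log^c n$ to satisfy $a < p$, $b < p$, and $a + b < p - \delta_1(p - p_{\ast})$, so summing over the regimes $m \le n$ and $m \ge n$ shows the whole expression is $o_n(n^p) + o_m(m^p)$. Adding the two contributions gives the claimed upper bound (with constant $\tau - 1 + \varepsilon/2 + o(1) < \tau - 1 + \varepsilon$, and $\varepsilon$ arbitrary).

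The principal obstacle I foresee is the detailed exponent bookkeeping in the second step: one must verify, in each of the mixed regimes $m = n^c$ for $c \in (0,1) \cup (1, \infty)$, that every term arising from $Z_{p_{\ast}, \mathrm{left}}(m,n,F)$ inflated by $n^{(1-\delta_1)(p-p_{\ast})}$ is strictly dominated by $n^p + m^p$ with $n^{-\delta}$ or $m^{-\delta}$ savings. The threshold relation $(1-\alpha)p_{\ast} + (1-\beta)p_{\ast} = 1$ is the arithmetic engine that produces these savings; the slack $\delta_1(p - p_{\ast})$ then absorbs the $\log n$ factor coming from Proposition~\ref{PROP:Zaran-general}.
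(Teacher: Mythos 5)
Your proposal correctly adapts the proof of Proposition~\ref{PROP:r-graph-large-p} to the bipartite Zarankiewicz setting, which is precisely what the paper's one-line proof ("similar to Proposition~\ref{PROP:r-graph-large-p}") indicates; the decomposition into high-degree $U$ (controlled via the K\H{o}v\'ari--S\'os--Tur\'an bound) and low-degree $V_1'$ (controlled via a drop in the exponent) follows the same template. The only notable variation is that you drop to $\hat p = p_{\ast}$ and invoke the critical bound of Proposition~\ref{PROP:Zaran-general}, absorbing its $\log n$ factor into the $n^{-\delta_1(p-p_{\ast})}$ slack, whereas the template drops to $\hat p$ strictly below $p_{\ast}$ and uses the sub-threshold estimate (here Proposition~\ref{PROP:Zaran-p-small}); both routes work, and the bipartite structure additionally lets you skip the Minkowski step since deleting $U \subseteq V_1$ leaves the degrees of $V_1'$ unchanged.
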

\begin{proof}[Proof of Proposition~\ref{PROP:Zaran-p-large}]
    The proof is similar to that of Proposition~\ref{PROP:r-graph-large-p}. 
\end{proof}
\begin{proposition}\label{PROP:Zaran-p-small}
    For every $1 < p < \frac{1}{2-\alpha - \beta}$, we have 
    \begin{align*}
        \norm{G}_{p, \mathrm{left}}
        = O\left(m^{1-p(1-\alpha)} n^{\beta p} + (m+n^{p}) \log^{\frac{p_{\ast}-1}{\delta} + 1} n  \right),
    \end{align*}
    where $p_{\ast} \coloneqq \frac{1}{2-\alpha - \beta}$ and $\delta \coloneqq \frac{1-p(2-\alpha-\beta)}{2}$. 
\end{proposition}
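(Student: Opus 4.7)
My plan is to adapt Proposition~\ref{PROP:r-graph-small-p}, which treats the analogous subcritical regime $p<1/(r-1-\alpha)$ for $r$-graphs, to the asymmetric Zarankiewicz setting with two size parameters $m,n$. The scheme is: establish a bipartite, Zarankiewicz-flavored version of the regularization lemma (Lemma~\ref{LEMMA:regularization-p-norm}); use it to pass to an almost-left-regular subgraph; then derive an edge-count lower bound that contradicts the hypothesis $Z(m',n',F)=O\left((m')^{\alpha}(n')^{\beta}+m'+n'\right)$.

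Assume for contradiction that $\norm{G}_{p,\mathrm{left}}$ exceeds $C_0\,m^{1-p(1-\alpha)}n^{p\beta}+C_0(m+n^{p})\log^{(p_{\ast}-1)/\delta+1}n$ for a sufficiently large constant $C_0$. The regularization proceeds exactly as in Lemma~\ref{LEMMA:regularization-p-norm}: at each round, define the heavy set $U=\{v\in V_{1}:d^{p}(v)\geq K\norm{G}_{p,\mathrm{left}}/m\}$; if $\sum_{v\in U}d^{p}(v)<\varepsilon\norm{G}_{p,\mathrm{left}}$, terminate with $H\coloneqq G[V_{1}\setminus U,V_{2}]$, which is almost-regular on the left. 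Otherwise, replace $V_{2}$ by a uniformly random subset of size $|V_{2}|/K$ via Proposition~\ref{PROP:random-sample} and iterate on $G[U, \text{sample}]$. With $K$ and the shrinkage rate tuned (so that the normalized density $\norm{G}_{p,\mathrm{left}}/\bigl(m^{1-p(1-\alpha)}n^{p\beta}\bigr)$ strictly grows by a factor $K^{2\delta}$ each round), the iteration must terminate in $O(\log n)$ steps, producing $H=G[U,W]$ with $|U|=m'$, $|W|=n'$, with $\norm{H}_{p,\mathrm{left}}\geq(1-\varepsilon)C_{0}(m')^{1-p(1-\alpha)}(n')^{p\beta}$, left-maximum degree at most $\bigl(K\norm{H}_{p,\mathrm{left}}/m'\bigr)^{1/p}$, and $m',n'$ bounded polynomially below in $m,n$ in a way controlled by $\delta$.

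From the max-degree bound, $|H|\geq\norm{H}_{p,\mathrm{left}}/\Delta_{\mathrm{left}}(H)^{p-1}\geq(m'/K)^{(p-1)/p}\norm{H}_{p,\mathrm{left}}^{1/p}$, which after plugging the density lower bound becomes $|H|\geq c\,C_{0}^{1/p}(m')^{\alpha}(n')^{\beta}$, with $c=c(p,\alpha,\beta,K)>0$ independent of $C_{0}$. For $C_{0}$ large, this exceeds the leading Zarankiewicz contribution $(m')^{\alpha}(n')^{\beta}$; to also beat the additive $m'+n'$ term, we use the additive hypothesis $C_0(m+n^{p})\log^{(p_{\ast}-1)/\delta+1}n$. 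The polylog exponent $(p_{\ast}-1)/\delta+1$ enters precisely here: since the regularization may shrink $m$ by a factor $K^{k}\leq(\log n)^{O((p_{\ast}-1)/\delta)}$ over $k=O(\log n)$ rounds (tracking the explicit bound $K^{k}\leq n^{(1-4\delta)/(1-3\delta)}C^{-1/(1-3\delta)}$ from the proof of Lemma~\ref{LEMMA:regularization-p-norm}, suitably translated), this polylog buffer on the $(m+n^{p})$ side guarantees that $(m')^{\alpha}(n')^{\beta}$ still dominates $m'+n'$ at the end of the iteration, yielding $|H|>Z(m',n',F)$ and hence the sought ordered copy of $F$ in $H\subseteq G$, a contradiction.

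The main obstacle is the two-sided bookkeeping: in Lemma~\ref{LEMMA:regularization-p-norm} only one vertex set shrinks and the cleaning target $\norm{\mathcal{H}}_{p}\geq(1-\varepsilon)C m^{1+p\alpha}$ involves a single scale $m$, whereas here both $V_{1}$ and $V_{2}$ may shrink and the cleaning target $(m')^{1-p(1-\alpha)}(n')^{p\beta}$ is a joint quantity that must dominate under both sampling steps. One must choose the subsampling schedule (how much of $V_{2}$ to keep, whether to also subsample $V_{1}$ when shifting between heavy and light sides, and how to split the iteration across the two sides) so that the Pigeonhole-induced increment in the normalized density compensates the joint $(m')^{1-p(1-\alpha)}(n')^{p\beta}$ shrinkage, and so that the final additive slack allocated to $m+n^{p}$ matches the Zarankiewicz $m'+n'$ slack — this is precisely what pins down the exponent $(p_{\ast}-1)/\delta+1$.
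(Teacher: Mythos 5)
Your overall architecture — reduce by contradiction, regularize to a left-almost-regular bipartite subgraph via a bipartite variant of Lemma~\ref{LEMMA:regularization-p-norm}, then apply the Zarankiewicz hypothesis — matches the paper's. But there is a genuine gap in the iteration-count analysis, and it is precisely the piece that makes the polylog exponent come out. You claim the regularization terminates in $O(\log n)$ rounds and simultaneously that the total shrinkage factor $K^{k}$ is $(\log n)^{O((p_{\ast}-1)/\delta)}$; these two statements are inconsistent, since $k=O(\log n)$ rounds of constant-factor shrinkage gives $n^{O(1)}$, not a polylog. You also point to the bound $K^{k}\leq n^{(1-4\delta)/(1-3\delta)}C^{-1/(1-3\delta)}$ from the proof of Lemma~\ref{LEMMA:regularization-p-norm} and suggest it ``suitably translated'' becomes polylogarithmic — but that bound is genuinely polynomial, and in the original lemma it is allowed to be so because the final target $n^{1+p\alpha}$ is polynomial. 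Here the target has only a polylog correction, so a polynomial shrinkage would destroy the conclusion.

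The missing idea is the paper's Claim~\ref{CLAIM:Zaran-k-ast}: the number of rounds is $O(\log\log n)$, not $O(\log n)$, and this is not obtained by ``translating'' Lemma~\ref{LEMMA:regularization-p-norm}. The mechanism is new to the Zarankiewicz setting: the normalized density $\Phi$ starts at $\geq C$, grows by a fixed factor per round, and is capped at roughly $C\log n$ by the already-proved crude bound Proposition~\ref{PROP:Zaran-general} (once one reduces, via the $(m+n^{p})\log^{(p_{\ast}-1)/\delta}n$ term, to the regime where the $m^{1-p(1-\alpha)}n^{p\beta}$ term dominates). A geometric quantity bounded by $\log n$ can only iterate $O(\log\log n)$ times, and $2^{O(\log\log n)}$ is exactly the polylog shrinkage you need. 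You describe the parameter tuning that should pin down the exponent $(p_{\ast}-1)/\delta+1$ as the ``main obstacle'' without resolving it; this cap on $\Phi$ is what resolves it. One further simplification you could borrow from the paper: the final contradiction does not need a subgraph-copy argument with explicit lower bounds on $m',n'$; instead one bounds $\Phi(G_{k-1})$ by a constant directly and then uses monotonicity of $\Phi$ to transfer back to $G$, so the shrinkage bound is only needed to verify that the $m^{\alpha}n^{\beta}$ Zarankiewicz term still dominates the additive $m+n$ term at scale $k-1$, not for the main estimate.
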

\begin{proof}[Sketch proof of Proposition~\ref{PROP:Zaran-p-small}]
    Let $G = G[V_1, V_2]$ be a bipartite graph with $|V_1| = m$ and $|V_2| = n$. Let $\varepsilon \coloneqq \delta/2 = 1 - p(2-\alpha - \beta)$. 
    Suppose that $G$ does not contain any ordered copy of $F = F[W_1, W_2]$. 
    Suppose to the contrary that 
    \begin{align*}
        \norm{G}_{p, \mathrm{left}}
        \ge C \left(m^{1-p(1-\alpha)} n^{\beta p} + (m+n^{p})\log^{\frac{p_{\ast}-1}{\delta} + 1} n  \right), 
    \end{align*}
    where $C > 0$ is a large constant. 
%
% \noindent \textbf{Claim A.1}  $\Gl{G} = O\left(m^{1-p(1-\alpha)} n^{\beta p} \log n+(m+n^p) \log n\right).$  
% \begin{poc}
% $\exists U'\in U$, such that for any $u,v \in U'$, $d_V(u) \le 2d_V(v)$ and $$\sum_{u\in U'} d^p (u) \ge \Gl{G} /\log n.$$
% Let $|U'| = m'$, since $m' d_{\min} \le \sum_{u\in U'} d (u) \le C (m'^\alpha n^{\beta} + m'+n),$
% $$ \Gl{G}/\log n \le \sum_{u\in U'} d^p (u) \le m' d^p_{\min} \le C' (m'^{1-p(1-\alpha)} n^{\beta p}+(m'+n^p)) .$$
% If $p< 1-p(1-\alpha)$, then $$\Gl{G} = O\left(m^{1-p(1-\alpha)} n^{\beta p} \log n+(m+n^p) \log n\right).$$
% If $p\ge 1-p(1-\alpha)$, then $$\Gl{G} = O\left(m+n^p) \log n\right).$$
% \end{poc}
%

    We may assume that 
    \begin{align}\label{equ:Zaran-assum-m-n}
        m^{1 - p(1-\alpha)} n^{p \beta} \ge (m+n^{p}) \log^{\frac{p_{\ast}-1}{\delta}} n, 
    \end{align}
    since otherwise, by Proposition~\ref{PROP:Zaran-general}, we are done. 
    Similar to the proof of Lemma~\ref{LEMMA:regularization-p-norm}, for every bipartite graph $H = H[U_1, U_2]$, let 
    \begin{align*}
        \Phi(H)
        \coloneqq \frac{\norm{H}_{p,\mathrm{left}}}{|U_1|^{1-p(1-\alpha)} |U_2|^{p\beta}}.  
    \end{align*}

    Define a sequence of bipartite graphs $G_i = G_{i}[A_i, B_i]$ for $1 \le i \le k$ for some integer $k$ with 
    \begin{enumerate}[label=(\roman*)]
        \item\label{item:Zaran-1} $A_0 = V_1 \supseteq A_1 \supseteq \cdots \supseteq A_k$ and $B_0 = V_2 \supseteq B_1 \supseteq \cdots \supseteq B_k$, 
        \item\label{item:Zaran-2} $|A_i| = \frac{|V_1|}{2^{i}}$ and $|B_i| = \frac{|V_2|}{2^{i}}$ for $i \in [k]$,
        \item\label{item:Zaran-3} $\Phi(G_{i+1}) \ge 2^{\delta - \varepsilon} \cdot \Phi(G_{i})$ for $i \in [0,k-1]$, 
        \item\label{item:Zaran-4} for every $i \in [k-1]$, we have $\sum_{u \in U_i} d_{G_{i-1}}^{p}(u) \ge \frac{1}{2^{\varepsilon}} \norm{G_{i-1}}_{p,\mathrm{left}}$, where
        \begin{align*}
            U_{i} 
            \coloneqq \left\{v\in A_{i-1} \colon d_{G_{i-1}}(v) \ge \left(\frac{2\norm{G_{i-1}}_{p,\mathrm{left}}}{|A_{i-1}|}\right)^{1/p}\right\},
        \end{align*} 
        \item\label{item:Zaran-5} we have $\sum_{u \in U_k} d_{G_{k-1}}^{p}(u) < \frac{1}{2^{\varepsilon}} \norm{G_{k-1}}_{p,\mathrm{left}}$, where
        \begin{align*}
            U_{k} 
            \coloneqq \left\{v\in A_{k-1} \colon d_{G_{k-1}}(v) \ge \left(\frac{2\norm{G_{k-1}}_{p,\mathrm{left}}}{|A_{k-1}|}\right)^{1/p}\right\}.
        \end{align*} 
    \end{enumerate}
    \begin{claim}\label{CLAIM:Zaran-k-ast}
        We have $k \le k_{\ast} \coloneqq \frac{2}{\delta} \log\log n$.
    \end{claim}
    \begin{proof}[Proof of Claim~\ref{CLAIM:Zaran-k-ast}]
        Notice from~\eqref{equ:Zaran-assum-m-n} that for every $i \le k_{\ast}$, we have  
        \begin{align}\label{equ:Zaran-assum-m-n-i}
            |A_{i}|^{1-p(1-\alpha)} |B_{i}|^{p \beta}
            & = \left(\frac{m}{2^{i}}\right)^{1-p(1-\alpha)} \left(\frac{n}{2^{i}}\right)^{p \beta}
            = \frac{m^{1-p(1-\alpha)} n^{p \beta}}{2^{i \cdot (1-p(1-\alpha) +p\beta)}}
            \ge \frac{m^{1-p(1-\alpha)} n^{p \beta}}{2^{i \cdot p_{\ast}}} \notag \\
            & \ge \frac{(m+n^{p}) 2^{k_{\ast}(p_{\ast}-1)}}{2^{i \cdot p_{\ast}}}
            \ge \frac{m+n^{p}}{2^{i}}
            \ge |A_{i}| + |B_{i}|^{p}.
        \end{align}
        Suppose to the contrary that $k > k_{\ast}$. 
        Then it follows from~\ref{item:Zaran-3} that 
        \begin{align*}
            \frac{\norm{G_{k_{\ast}}}_{p, \mathrm{left}}}{|A_{k}|^{1-p(1-\alpha)} |B_{k}|^{p \beta}}
            = \Phi(G_{k})
            \ge \left(2^{\delta - \varepsilon}\right)^{k_{\ast}} \cdot \Phi(G)
            = 2^{\frac{\delta}{2} \cdot k_{\ast}} \cdot \Phi(G)
            \ge C \log n,
        \end{align*}
        which, combining with~\eqref{equ:Zaran-assum-m-n-i}, implies that 
        \begin{align*}
            \norm{G_{k_{\ast}}}_{p, \mathrm{left}} 
            \ge C |A_{k}|^{1-p(1-\alpha)} |B_{k}|^{p \beta} \log n
            \ge \frac{C}{2} \left(|A_{k}|^{1-p(1-\alpha)} |B_{k}|^{p \beta} + |A_{k}| + |B_{k}|^{p}\right) \log n, 
        \end{align*}
        contradicting Proposition~\ref{PROP:Zaran-general}. 
    \end{proof}
    Since $\sum_{u \in U_k} d_{G_{k-1}}^{p}(u) < \frac{1}{2^{\varepsilon}} \norm{G_{k-1}}_{p,\mathrm{left}}$ (due to~\ref{item:Zaran-5}), we have 
    \begin{align*} 
        \sum_{u \in A_{k} \setminus U_k} d_{G_{k-1}}^{p}(u)
        = \norm{G_{k-1}}_{p,\mathrm{left}} - \sum_{u \in U_k} d_{G_{k-1}}^{p}(u) 
        \ge \left(1 - \frac{1}{2^{\varepsilon}}\right) \norm{G_{k-1}}_{p,\mathrm{left}}. 
    \end{align*}
    On the other hand, it follows from the definition of $U_k$ and~\eqref{equ:Zaran-assum-m-n} that 
    \begin{align*}
        \sum_{u \in A_{k} \setminus U_k} d_{G_{k-1}}^{p}(u)
        & \le \sum_{u \in A_{k} \setminus U_k} d_{G_{k-1}}(u) \cdot \left(\frac{2 \norm{G_{k-1}}_{p, \mathrm{left}}}{|A_{k-1}|}\right)^{\frac{p-1}{p}}  \\
        & \le |G_{k-1}| \cdot \left(\frac{2 \norm{G_{k-1}}_{p, \mathrm{left}}}{|A_{k-1}|}\right)^{\frac{p-1}{p}}  \\
        & \le C\left(|A_{k-1}|^{\alpha} |B_{k-1}|^{\beta} + |A_{k-1}| + |B_{k-1}|\right) \cdot \left(\frac{2 \norm{G_{k-1}}_{p, \mathrm{left}}}{|A_{k-1}|}\right)^{\frac{p-1}{p}} \\
        & \le 2C |A_{k-1}|^{\alpha} |B_{k-1}|^{\beta} \cdot \left(\frac{2 \norm{G_{k-1}}_{p, \mathrm{left}}}{|A_{k-1}|}\right)^{\frac{p-1}{p}}. 
    \end{align*}
    Therefore, 
    \begin{align*}
        \left(1 - \frac{1}{2^{\varepsilon}}\right) \norm{G_{k-1}}_{p,\mathrm{left}}
        \le 2C |A_{k-1}|^{\alpha} |B_{k-1}|^{\beta} \cdot \left(\frac{2 \norm{G_{k-1}}_{p, \mathrm{left}}}{|A_{k-1}|}\right)^{\frac{p-1}{p}}, 
    \end{align*}
    which implies that 
    \begin{align*}
        \norm{G_{k-1}}_{p,\mathrm{left}} 
        \le \frac{2^{2p-1}C^{p}}{(1-2^{-\varepsilon})^{p}} |A_{k-1}|^{1 - p(1-\alpha)} |B_{k-1}|^{p \beta}. 
    \end{align*}
    Combining this with~\ref{item:Zaran-3}, we obtain 
    \begin{align*}
        \norm{G}_{p,\mathrm{left}}
        & = \Phi(G) \cdot m^{1-p(1-\alpha)} n^{p \beta} 
        \le \Phi(G_{k-1}) \cdot m^{1-p(1-\alpha)} n^{p \beta} \\
        & = \frac{\norm{G_{k-1}}_{p,\mathrm{left}}}{|A_{k-1}|^{1 - p(1-\alpha)} |B_{k-1}|^{p \beta}} \cdot m^{1-p(1-\alpha)} n^{p \beta} 
         \le \frac{2^{2p-1}C^{p}}{(1-2^{-\varepsilon})^{p}} \cdot m^{1-p(1-\alpha)} n^{p \beta}. 
    \end{align*}
    This completes the proof of Proposition~\ref{PROP:Zaran-p-small}. 
\end{proof}

\end{appendix}
\end{document}